\newcommand{\QH}{\mathop{QH}\nolimits}
\newcommand{\HF}{\mathop{H\!F}\nolimits}
\newcommand{\bslambda}{{\boldsymbol{\lambda}}}
\newcommand{\ad}{\operatorname{ad}}
\newcommand{\FS}{\mathrm{FS}}
\newcommand{\Fix}{\operatorname{Fix}}
\newcommand{\Fl}{\operatorname{Fl}}
\newcommand{\ii}{\sqrt{-1}}
\newcommand{\im}{\operatorname{Im}}
\newcommand{\Int}{\operatorname{Int}}
\newcommand{\Jac}{\operatorname{Jac}}
\newcommand{\PD}{\operatorname{PD}}
\newcommand{\po}{\mathfrak{PO}}
\newcommand{\re}{\operatorname{Re}}
\newcommand{\scMwhat}{\widehat{\scM}_{\mathrm{weak}}}
\newcommand{\reg}{\mathrm{reg}}
\renewcommand{\GL}{\operatorname{GL}}
\newcommand{\U}{\operatorname{U}}
\newcommand{\SU}{\operatorname{SU}}
\newcommand{\dge}{\rotatebox[origin=c]{45}{$\ge$}}
\newcommand{\uge}{\rotatebox[origin=c]{315}{$\ge$}}
\newcommand{\dgne}{\rotatebox[origin=c]{45}{$>$}}
\newcommand{\ugne}{\rotatebox[origin=c]{315}{$>$}}
\newcommand{\deq}{\rotatebox[origin=c]{45}{$=$}}
\newcommand{\ueq}{\rotatebox[origin=c]{315}{$=$}}
\newcommand{\updots}{\rotatebox[origin=c]{45}{$\cdots$}}
\newcommand{\dndots}{\rotatebox[origin=c]{315}{$\cdots$}}
\newcommand{\rueq}{\rotatebox[origin=c]{45}{$=$}}
\newcommand{\lueq}{\rotatebox[origin=c]{315}{$=$}}
\newcommand{\lmd}[1]{\hbox to1.65em{$\hfill \lambda_{#1} \hfill$}}
\newcommand{\llmd}[2]{\hbox to1.65em{$ \lambda_{#2}^{(#1)}$}}
\newcommand{\gcbox}[1]{\hbox to1.65em{$\hfill {#1} \hfill$}}
\title{Floer cohomologies of non-torus fibers\\
of the Gelfand-Cetlin system}
\author{Yuichi Nohara and Kazushi Ueda}
\date{}
\begin{document}
\maketitle

\begin{abstract}
The Gelfand-Cetlin system has non-torus Lagrangian fibers
on some of the boundary strata
of the moment polytope.
We compute Floer cohomologies of such
non-torus Lagrangian fibers in the cases of 
the 3-dimensional full flag manifold and 
the Grassmannian of 2-planes in a 4-space.
\end{abstract}


\section{Introduction}
\label{sc:intro}

Let $P$ be a parabolic subgroup of $\GL(n,\bC)$ and
$F := \GL(n,\bC)/P$ be the associated flag manifold.
The Gelfand-Cetlin system, introduced by
Guillemin and Sternberg \cite{Guillemin-Sternberg_GCS},
is a completely integrable system
\[
  \Phi : F \longrightarrow \bR^{(\dim_{\bR} F)/2},
\]
i.e., a set of functionally independent and Poisson commuting functions.
The image $\Delta = \Phi (F)$ is a convex polytope
called the \emph{Gelfand-Cetlin polytope}, and
$\Phi$ gives a Lagrangian torus fibration
structure over the interior $\Int \Delta$ of $\Delta$.
Unlike the case of toric manifolds
where the fibers over the relative interior
of a $d$-dimensional face of the moment polytope
are $d$-dimensional isotropic tori,
the Gelfand-Cetlin system has non-torus Lagrangian fibers
over the relative interiors of some of the faces of $\Delta$.

Let $(X, \omega)$ be a compact toric manifold of $\dim_{\bC} X = N$,
and $\Phi: X \to \bR^N$ be the toric moment map
with the moment polytope
$\Delta = \Phi(X)$.
For an interior point $\bsu \in \Int \Delta$, let $L(\bsu)$ denote 
the Lagrangian torus fiber $\Phi^{-1}(\bsu)$.
Lagrangian intersection Floer theory endows
the cohomology group $H^*(L(\bsu); \Lambda_0)$ over the Novikov ring
\[
 \Lambda_0
  := \left\{ \left.
      \sum_{i=1}^\infty a_i T^{\lambda_i} \, \right| \,
       a_i \in \bC, \ 
       \lambda_i \ge 0, \ 
       \lim_{i \to \infty} \lambda_i = \infty
     \right\}
\]
with a structure $\{ \frakm_k \}_{k \ge 0}$
of a unital filtered $A_{\infty}$-algebra
\cite{Fukaya-Oh-Ohta-Ono}.
Let $\Lambda$ and $\Lambda_+$ be the quotient field
and the maximal ideal of the local ring $\Lambda_0$ respectively.
An odd-degree element $b \in H^{\mathrm{odd}}(L(\bsu); \Lambda_0)$
is said to be a \emph{bounding cochain}
if it satisfies the 
\emph{Maurer-Cartan equation}
\begin{equation}
 \sum_{k=0}^\infty \frakm_k(b^{\otimes k}) = 0.
\end{equation}
A solution $b \in H^{\mathrm{odd}}(L(\bsu); \Lambda_0)$ to the 
\emph{weak Maurer-Cartan equation}
\begin{align}
 \sum_{k=0}^\infty \frakm_k(b^{\otimes k}) \equiv 0 \mod \Lambda_0 \, \bfe_0
\end{align}
is called a \emph{weak bounding cochain},
where $\bfe_0$ is the unit in $H^*(L(\bsu); \Lambda_0)$.
The set of weak bounding cochains will be denoted by
$\scMwhat (L(\bsu))$.
The {\em potential function} is a map
$
 \po \colon \scMwhat (L(\bsu)) \to \Lambda_0
$
defined by
\begin{align}
 \sum_{k=0}^\infty \frakm_k(b,\ldots,b) = \po(b) \bfe_0.
\end{align}
A weak bounding cochain gives a deformed filtered $A_\infty$-algebra
whose $A_\infty$-operations are given by
\begin{align}
 \frakm^b_k(x_1,\ldots,x_k)
  = \sum_{m_0=0}^\infty \cdots \sum_{m_k=0}^\infty
   \frakm_{m_0+\cdots+m_k+k}
    (b^{\otimes m_0} \otimes x_1 \otimes b^{\otimes m_1} \otimes
    \cdots \otimes x_k \otimes b^{\otimes m_k}).
\end{align}
The weak Maurer-Cartan equation
implies that $\frakm_1^b$ squares to zero,
and the \emph{deformed Floer cohomology} is defined by
\begin{align}
 \HF((L(\bsu),b), (L(\bsu),b); \Lambda_0)
  = \left. \Ker(\frakm_1^b) \right/ \Image(\frakm_1^b).
\end{align}
More generally, one can deform the Floer differential $\frakm_1$ 
by 
\begin{equation}
  \delta_{b_0, b_1} (x) = \sum_{k_0, k_1 \ge 0}
    \frakm_{k_0 + k_1 + 1}( \underbrace{b_0, \dots, b_0}_{k_0}, x, 
    \underbrace{b_1, \dots, b_1}_{k_1})
\end{equation}
for a pair $(b_0, b_1)$ of weak bounding cochains
with $\po(b_0) = \po(b_1)$.
The Floer cohomology of the pair
$((L(\bsu), b_0), (L(\bsu), b_1))$
is defined by
\begin{align}
 \HF((L(\bsu),b_0), (L(\bsu),b_1); \Lambda_0)
  = \left. \Ker(\delta_{b_0, b_1}) \right/ \Image(\delta_{b_0, b_1}).
\end{align}
If the toric manifold $X$ is Fano,
then the following hold
\cite{FOOO_toric_I}:
\begin{itemize}
 \item
$H^1(L(\bsu); \Lambda_0)$ is contained in $\scMwhat (L(\bsu))$.
 \item
The potential function $\po$ on
\begin{equation} \label{eq:SYZ_mirror}
 \bigcup_{\bsu \in \Int \Delta} H^1(L(\bsu); \Lambda_0/ 2\pi \ii \bZ)
  \cong \Int \Delta \times (\Lambda_0/ 2\pi \ii \bZ)^N
\end{equation}
can be considered as a Laurent polynomial,
which can be identified with the superpotential
of the Landau-Ginzburg mirror of $X$.
 \item
Each critical point of $\po$ corresponds to a pair $(\bsu, b)$
such that the deformed Floer cohomology
$
 \HF((L(\bsu),b),(L(\bsu),b);\Lambda)
$
over the Novikov field $\Lambda$
is non-trivial.
 \item
If the deformed Floer cohomology group
over the Novikov field
is non-trivial,
then it is isomorphic to the classical cohomology group;
\begin{align}
 \HF((L(\bsu),b),(L(\bsu),b);\Lambda)
  \cong H^*(T^N ;\Lambda).
\end{align}
 \item
The quantum cohomology ring
$\QH(X; \Lambda)$
is isomorphic to the Jacobi ring $\Jac (\po)$
of the potential function.
\end{itemize}

In particular, the number of pairs $(L(\bsu), b)$ with nontrivial 
Floer cohomology coincides with
$
 \rank \QH(X;\Lambda)
  = \rank H^*(X;\Lambda).
$

Nishinou and the authors
\cite{Nishinou-Nohara-Ueda_TDGCSPF}
introduced the notion of a toric degeneration
of an integrable system,
and used it to compute the potential function of Lagrangian torus fibers
of the Gelfand-Cetlin system.
The resulting potential function can be considered as a Laurent polynomial
just as in the toric Fano case,
which can be identified with the superpotential
of the Landau-Ginzburg mirror 
of the flag manifold given in \cite{Givental_SPIQTLFMMC,
Batyrev-Ciocan-Fontanine-Kim-van_Straten_MSTD}.
In contrast to the toric case,
the rank of $H^*(F;\Lambda)$ is greater
in general
than the rank of the Jacobi ring $\Jac(\po)$,
and hence than the number of Lagrangian torus fibers
with non-trivial Floer cohomology.
In the case of the 3-dimensional flag manifold $\Fl(3)$,
the potential function has six critical points,
which is equal to the rank of $H^*(\Fl(3);\Lambda)$.
Similarly, the potential function
for the Grassmannian $\Gr(2,5)$ of 2-planes in $\bC^5$
has ten critical points,
which is equal to the rank of $H^*(\Gr(2,5);\Lambda)$.
On the other hand, the number of critical points of the potential function
for the Grassmannian $\Gr(2,4)$ of 2-planes in $\bC^4$ is four,
which is less than the rank of $H^*(\Gr(2,4);\Lambda)$,
which is six.

In this paper,
we study non-torus Lagrangian fibers of the Gelfand-Cetlin system
over the boundary of the Gelfand-Cetlin polytope
in the cases of 
$\Fl(3)$
and 
$\Gr(2,4)$. 
The main results are the following:

\begin{theorem} \label{th:main1}
Let $\Phi \colon \Fl(3) \to \bR^3$ be the Gelfand-Cetlin system
with  the Gelfand-Cetlin polytope $\Delta = \Phi (\Fl(3))$.
\begin{enumerate}
 \item
 There exists a vertex $\bsu_0$ of $\Delta$ such that 
 a fiber $L(\bsu) = \Phi^{-1}(\bsu)$ over a boundary point
 $\bsu \in \partial \Delta$ 
 is a Lagrangian submanifold
 if and only if $\bsu = \bsu_0$. 
 \item
 The Lagrangian fiber $L(\bsu_0)$ is diffeomorphic to $\SU(2) \cong S^3$.
 \item
The Floer cohomology of $L(\bsu_0)$
over the Novikov field $\Lambda$ is trivial;
\begin{equation}
 \HF(L(\bsu_0), L(\bsu_0); \Lambda) = 0.
\end{equation}
\end{enumerate}
\end{theorem}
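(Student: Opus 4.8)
The plan is to reduce the statement to a single count of holomorphic disks by degree bookkeeping, and then to evaluate that count from the explicit description of $L(\bsu_0)$.

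Write $L := L(\bsu_0)$, diffeomorphic to $S^3$ by part (2), so that $H^*(L; \Lambda_0) = \Lambda_0\,\bfe_0 \oplus \Lambda_0\,f$ with $\deg f = 3$ and $H^1(L) = H^2(L) = 0$. Since $H_1(L;\bZ) = 0$, every holomorphic disk with boundary on $L$ caps off in $L$ to a sphere, and its Maslov index equals $2\langle c_1(\Fl(3)), [\text{sphere}]\rangle$. Under the incidence embedding $\Fl(3) \hookrightarrow \mathbb{P}^2 \times (\mathbb{P}^2)^\vee$ one has $c_1(\Fl(3)) = \mathcal{O}(2,2)|_{\Fl(3)}$, so this pairing is even; hence every such disk has Maslov index in $4\bZ$, the minimal Maslov number is at least $4$, and $\frakm_0(1) = 0$ (a Maslov-$4$ disk would contribute in degree $2 - 4 < 0$). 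Thus $L$ is unobstructed and $\frakm_1$ squares to zero; $\frakm_1(\bfe_0) = 0$ by unitality, and $\frakm_1(f) \in H^0(L;\Lambda_0) = \Lambda_0\,\bfe_0$ receives contributions only from Maslov-index-$4$ disks (a disk of Maslov $\ge 8$ contributes in negative degree). So the Floer complex over $\Lambda$ is $\Lambda\,\bfe_0 \oplus \Lambda\,f$ with $\frakm_1(f) = c\,\bfe_0$, where $c = \sum_{\mu(\beta)=4} T^{\omega(\beta)} n_\beta$ and $n_\beta$ is the signed count of holomorphic disks in class $\beta$ through two generic boundary points, and
\[
 \HF(L, L; \Lambda) = 0 \;\Longleftrightarrow\; c \ne 0 \;\Longleftrightarrow\; \bfe_0 \in \Image\frakm_1 .
\]
As $L$ is moreover monotone in $\Fl(3)$ (since $\pi_1(L) = 0$ gives $\pi_2(\Fl(3),L) \cong \pi_2(\Fl(3))$, on which symplectic area and $c_1$ are proportional), the sum is finite and the vanishing is equivalent to $c \ne 0$ over $\mathbb{Q}$, or, since $H^*(L)$ is torsion-free, over $\bZ/2$.

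It remains to enumerate the minimal, Maslov-index-$4$, holomorphic disks bounded by $L$ and to check that $c \ne 0$; this is the heart of the matter and the main obstacle. I would use two structures. First, recall the explicit model of $L$, in the realization of $\Fl(3)$ as the $\U(3)$-coadjoint orbit with eigenvalues $\lambda_1 > \lambda_2 > \lambda_3$, as the set of Hermitian matrices $\left(\begin{smallmatrix}\lambda_2 I_2 & v \\ v^* & \lambda_1 + \lambda_3 - \lambda_2\end{smallmatrix}\right)$ with $v \in \bC^2$ and $|v|^2 = (\lambda_1 - \lambda_2)(\lambda_2 - \lambda_3)$; the residual subgroup $\U(2) \times \U(1) \subset \U(3)$ acts symplectically on $\Fl(3)$ and transitively on $L$, so it organizes each disk moduli space into orbits and constrains their regularity and orientations. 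Second, the two $\mathbb{P}^1$-bundle projections from $\Fl(3)$ to $\mathbb{P}^2$ and to $(\mathbb{P}^2)^\vee$, sending $(V_1 \subset V_2)$ to $[V_1]$ and to $[V_2]$, restrict on $L$ to diffeomorphisms onto round $3$-spheres lying in affine charts $\bC^2$, with the remaining flag datum pinned down by an anti-holomorphic assignment of a line to each point. A holomorphic disk in $\Fl(3)$ bounded by $L$ therefore projects to a pair of holomorphic disks into the algebraic surfaces $\mathbb{P}^2, (\mathbb{P}^2)^\vee$ whose interiors lie in the corresponding balls and whose boundaries lie on these spheres; solving the resulting incidence-and-boundary problem, which is explicit because the targets and constraints are algebraic, classifies the Maslov-$4$ disks. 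I expect to find, for each of the two minimal homology classes, a single $\U(2)\times\U(1)$-family of such disks, and a resulting value $c = \pm 1 \ne 0$.

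The delicate points are proving transversality (or automatic regularity) for these symmetric, possibly non-regular, configurations; choosing orientations so that the components of a single class do not cancel in $n_\beta$; and ruling out additional higher-area Maslov-$4$ classes that could interfere with the nonvanishing at leading order in $T$. An alternative would be to run the enumeration inside the toric degeneration of \cite{Nishinou-Nohara-Ueda_TDGCSPF}, comparing with the disk counts of the nearby torus fibers $L(\bsu)$ as $\bsu \to \bsu_0$; but since $L(\bsu_0)$ lies over a vertex of $\Delta$, this still reduces to the local analysis above.
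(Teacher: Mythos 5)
Your reduction of part (3) to the nonvanishing of a single count $c$ of Maslov-index-$4$ disks through a point is correct and is exactly the paper's strategy: the degree bookkeeping, the identification $\pi_2(\Fl(3),L)\cong\pi_2(\Fl(3))\cong\bZ^2$ with two minimal classes of Maslov index $4$, and the equivalence $\HF(L,L;\Lambda)=0\iff c\ne 0$ all match. But your argument stops where the paper's work begins: you explicitly defer the enumeration, the regularity, and the orientation analysis, and these are not routine. Concretely, the two minimal classes $\beta_1,\beta_2$ have symplectic areas $\lambda_1$ and $\lambda_2$, so $c=\pm T^{\lambda_1}\pm T^{\lambda_2}$ with a priori independent signs. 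When $\lambda_1\ne\lambda_2$ nonvanishing is automatic, but when $\lambda_1=\lambda_2$ the two terms could cancel and kill the statement. (Incidentally, this is the only case in which your monotonicity claim is true: since $\omega=\lambda_1\omega_{\bP_1}+\lambda_2\omega_{\bP_2}$ while $c_1(\Fl(3))=2(\omega_{\bP_1}+\omega_{\bP_2})$, area and Maslov index are \emph{not} proportional on $\pi_2(\Fl(3),L)$ for $\lambda_1\ne\lambda_2$.) The paper settles the sign by invoking the Fukaya--Oh--Ohta--Ono theorem on moduli spaces related by an anti-symplectic involution, applied to the involution $\tau$ whose fixed locus is $L_0$ and which swaps $\beta_1$ and $\beta_2$; this gives $\frakm_{1,\beta_1}=\frakm_{1,\beta_2}$ at $\lambda_1=\lambda_2$, and the identity is then extended to general $\lambda_i$ by continuity of the moduli spaces. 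Your remark about an ``anti-holomorphic assignment'' gestures at this structure, but you never use it, and without it the case $\lambda_1=\lambda_2$ is open.

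For the other deferred points the paper proceeds as follows: Fredholm regularity is automatic because $L_0$ is $\SU(2)$-homogeneous (Evans--Lekili), and the disks in each class $\beta_i$ are classified in closed form via the Pl\"ucker embedding --- every bidegree-$(1,1)$ rational curve meeting $L_0$ along $\bR\cup\{\infty\}$ is written down explicitly --- yielding an evaluation map $\scM_2(J,\beta_i)\to L_0\times L_0$ that is generically one-to-one, hence $\frakm_{1,\beta_i}([p])=\pm[L_0]$. These three ingredients (explicit classification, automatic regularity, sign matching via the involution) are the substance of the proof; your proposal supplies only the framework. Finally, parts (1) and (2) of the theorem --- the combinatorial identification of the unique vertex over which the fiber is Lagrangian, and the identification of that fiber with a homogeneous $S^3$ --- are assumed rather than proved, although the explicit matrix model you state for $L(\bsu_0)$ is correct.
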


\begin{theorem} \label{th:main2}
Let $\Phi \colon \Gr(2,4) \to \bR^4$ be the Gelfand-Cetlin system 
with  the Gelfand-Cetlin polytope $\Delta = \Phi (\Gr(2,4))$.
\begin{enumerate}
 \item
 There exists an edge of $\Delta$ such that
 a fiber $L(\bsu) = \Phi^{-1}(\bsu)$ over $\bsu \in \partial \Delta$
 is a Lagrangian submanifold
 if and only if $\bsu$ is in the relative interior
 of the edge.
 \item
The Lagrangian fiber $L(\bsu)$ over any point $\bsu$
in the relative interior of the edge
is diffeomorphic to $\U(2) \cong S^1 \times S^3$.
 \item
$H^1(L(\bsu);\Lambda_0)$ is contained in $\scMwhat (L(\bsu))$.
 \item
The potential function is identically zero on $H^1(L(\bsu);\Lambda_0)$.
 \item
The Floer cohomology $\HF((L(\bsu), b), (L(\bsu), b); \Lambda)$
of a Lagrangian $\U(2)$-fiber $L(\bsu)$
over the Novikov field $\Lambda$ is non-trivial
if and only if $\bsu$ is the barycenter $\bsu_0$ of the edge
and $b = \pm \pi \sqrt{-1}/2 \, \bfe_1$,
where $\bfe_1$ is a generator of $H^1(L(\bsu); \bZ) \cong \bZ$.
 \item
 If the deformed Floer cohomology group
over the Novikov field
is non-trivial,
then it is isomorphic to the classical cohomology group;
\begin{align} \label{eq:HF1}
 \HF((L(\bsu_0),\pm \pi \sqrt{-1}/2 \, \bfe_1),
  (L(\bsu_0),\pm \pi \sqrt{-1}/2 \, \bfe_1);\Lambda)
  \cong H^*(S^1 \times S^3;\Lambda).
\end{align}
 \item
 The Floer cohomology of the pair
 $((L(\bsu_0), \pi \sqrt{-1}/2 \, \bfe_1),
 (L(\bsu_0), - \pi \sqrt{-1}/2 \, \bfe_1))$ is trivial;
\begin{align} \label{eq:HFpm}
 \HF((L(\bsu_0), \pi \sqrt{-1}/2 \, \bfe_1),
  (L(\bsu_0), \pi \sqrt{-1}/2 \, \bfe_1) ; \Lambda) = 0.
\end{align}
\end{enumerate}
\end{theorem}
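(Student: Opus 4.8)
Throughout set $L := L(\bsu_0)$ and $b_\pm := \pm\pi\sqrt{-1}/2\,\bfe_1 \in H^1(L;\Lambda_0)$. By assertions~(3)--(4) these are weak bounding cochains with $\po(b_+) = \po(b_-) = 0$, so the twisted differential $\delta_{b_+,b_-}$ is defined and squares to zero. By assertions~(5)--(6) the self-Floer cohomology $\HF((L,b_-),(L,b_-);\Lambda)$ is isomorphic to $H^*(S^1 \times S^3;\Lambda)$, which is $4$-dimensional; since $\frakm^{b_-}_1$ is a differential on the $4$-dimensional space $H^*(L;\Lambda)$ whose cohomology is again $4$-dimensional, it vanishes identically (and likewise $\frakm^{b_+}_1 = 0$). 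Hence $A := (H^*(L;\Lambda), \frakm^{b_-}_2) = \HF((L,b_-),(L,b_-);\Lambda)$ is a graded, unital, $4$-dimensional $\Lambda$-algebra concentrated in degrees $0,1,3,4$, and by the Poincar\'e duality of Lagrangian Floer cohomology \cite{Fukaya-Oh-Ohta-Ono} (equivalently, from the description of $\frakm^{b_-}_2$ obtained earlier) the $\frakm^{b_-}_2$-pairing $H^1(L) \otimes H^3(L) \to H^4(L) \cong \Lambda$ is perfect.

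The plan is to recognize the Floer complex $(H^*(L;\Lambda), \delta_{b_+,b_-})$ as a Koszul-type complex over $A$. Since $\bfe_0$ is a strict unit of the filtered $A_\infty$-algebra, in $\delta_{b_+,b_-}(\bfe_0) = \sum_{k_0,k_1 \ge 0} \frakm_{k_0+k_1+1}(b_+^{\otimes k_0}, \bfe_0, b_-^{\otimes k_1})$ every term of arity $\ne 2$ vanishes, leaving
\[
  \delta_{b_+,b_-}(\bfe_0) = \frakm_2(b_+, \bfe_0) + \frakm_2(\bfe_0, b_-) = b_+ - b_- = \pi\sqrt{-1}\,\bfe_1 ,
\]
whose coefficient $\pi\sqrt{-1}$ is a nonzero complex number, hence a unit of $\Lambda_0$. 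Again by strict unitality, the product $CF((L,b_+),(L,b_-)) \otimes CF((L,b_-),(L,b_-)) \to CF((L,b_+),(L,b_-))$ of the theory restricts on the first factor to $a \mapsto \bfe_0 \cdot a = \pm a$, so $\bfe_0$ generates $CF((L,b_+),(L,b_-))$ freely of rank one as a right $A$-module; and, because $\frakm^{b_-}_1 = 0$, the $A_\infty$-relations (Leibniz rule for the mixed $\frakm_2$) give that $\delta_{b_+,b_-}$ is $A$-linear under this identification. Thus $(CF((L,b_+),(L,b_-)), \delta_{b_+,b_-})$ is isomorphic, up to a unit and signs, to $A$ equipped with the degree-one differential ``multiplication by $\bfe_1$''; in particular $\bfe_1\bfe_1 = 0$ in $A$ (which is anyway forced by $H^2(L) = 0$).

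It then remains to compute the cohomology of the complex obtained from $A$ by multiplication by $\bfe_1$. As $A$ is concentrated in degrees $0,1,3,4$ with each graded piece one-dimensional, this complex splits as the direct sum of $\Lambda\bfe_0 \to \Lambda\bfe_1$, $\bfe_0 \mapsto \bfe_1$, and $\Lambda\eta \to \Lambda\zeta$, $\eta \mapsto \bfe_1\eta$, where $\eta,\zeta$ generate $H^3(L)$ and $H^4(L)$. The first map is an isomorphism, and the second is an isomorphism because $\bfe_1\eta \in H^4(L) = \Lambda\zeta$ is nonzero, the pairing $H^1(L) \otimes H^3(L) \to H^4(L) \cong \Lambda$ being perfect. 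Hence the complex is acyclic, and therefore
\[
  \HF((L(\bsu_0), \pi\sqrt{-1}/2\,\bfe_1), (L(\bsu_0), -\pi\sqrt{-1}/2\,\bfe_1); \Lambda) = 0 .
\]

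The step I expect to be the main obstacle is the second paragraph: justifying cleanly, within the filtered $A_\infty$-formalism of \cite{Fukaya-Oh-Ohta-Ono}, that $CF((L,b_+),(L,b_-))$ is free of rank one over $A$ and that $\delta_{b_+,b_-}$ is $A$-linear, which requires unwinding the mixed operations built from $\{\frakm_k\}$ with $b_+$'s and $b_-$'s inserted, together with the strict unit and the weak Maurer--Cartan property of $b_\pm$. Once this is in place the remainder is elementary linear algebra in the $4$-dimensional algebra $A$, whose only non-formal input is the non-degeneracy giving $\bfe_1\eta \ne 0$.
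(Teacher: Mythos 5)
Your proposal proves only assertion~(7) of the seven assertions in the theorem: assertions (1)--(6) are invoked as hypotheses. In the paper these are not formal consequences of one another: (1)--(2) require the matrix description of the fibers $L_t \cong \U(2)$ over the edge, (3)--(4) follow from the computation that the minimal Maslov number is $\mu_{L_t}(\beta_1)=\mu_{L_t}(\beta_2)=4$, and (5)--(6) rest on the classification of degree-one holomorphic curves through $L_t$ (\pref{pr:disk_Gr}), the resulting description of the moduli spaces (\pref{cr:moduli_G(2,4)_2}), and the integral formula \eqref{eq:int_along_fiber} giving $\frakm_1^b(\bfe_3)=e^xT^{\lambda+t}+e^{-x}T^{\lambda-t}$. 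None of this is reproduced, so as a proof of the stated theorem the proposal is very incomplete.

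For assertion~(7) itself your route is genuinely different from the paper's. The paper (\pref{th:Gr(2,4)_2}) computes the contributions of the classes $\beta_1,\beta_2$ to $\delta_{b,-b}$ explicitly, obtaining $\tfrac{16}{3\pi}T^{\lambda}$ on $\bfe_3$ and $\tfrac{32}{3\pi}T^{\lambda}\bfe_1$ on $\bfe_1\otimes\bfe_3$, and concludes vanishing over $\Lambda$ because $T^{\lambda}$ is invertible there; you instead argue from the energy-zero part of the differential, $\delta_{b_+,b_-}(\bfe_0)=\pm(b_+-b_-)$, and acyclicity of multiplication by $\bfe_1$. Two problems. First, the step you yourself flag --- that $\delta_{b_+,b_-}$ is right-$A$-linear, i.e.\ literally multiplication by $\delta_{b_+,b_-}(\bfe_0)$ --- is not established and does not follow formally from the $A_\infty$ relations, which give the Leibniz rule only up to $\frakm_3$-corrections. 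What your argument actually needs is weaker and provable: the energy-zero part of $\delta_{b_+,b_-}$ is $\pm(b_+-b_-)\cup(-)$ once one uses formality of $\U(2)$ to take a canonical model with no higher classical products; this operator is already acyclic on $H^*(S^1\times S^3)$; and acyclicity of the leading term propagates to the full differential by completeness of the energy filtration. Second, and more seriously, your key input $\delta_{b_+,b_-}(\bfe_0)\neq 0$ directly contradicts the paper's computation, in which the pair differential is nonzero only on $H^3\oplus H^4$ and the $\Lambda_0$-cohomology is the nonzero torsion module $(\Lambda_0/T^{\lambda}\Lambda_0)^2$; with your differential the complex would be acyclic already over $\Lambda_0$. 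The two treatments of the constant-disk contribution are incompatible (note that $\delta_{b_+,b_-}^2=0$ fails if one superposes your classical term on the paper's quantum coefficients), and you cannot leave this unresolved: if the conventions are such that $\delta_{b_+,b_-}$ vanishes on $H^0\oplus H^1$, your entire argument yields nothing, whereas if $\delta_{b_+,b_-}(\bfe_0)=\pm(b_+-b_-)$ holds --- as the interpretation of $b_\pm$ as flat line bundles of holonomy $\pm\ii$ suggests, since the rank-one local system of holonomy $-1$ on $S^1\times S^3$ has vanishing cohomology --- then the $\Lambda_0$-statement of \pref{th:Gr(2,4)_2} needs amending even though the $\Lambda$-statement survives. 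A complete proof must fix one convention and verify $\delta_{b_+,b_-}^2=0$ against it.
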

More precise statements,
which describe the Floer cohomology groups
over the Novikov ring $\Lambda_0$,
are given in \pref{th:Fl(3)}, \pref{th:Gr(2,4)}, and \pref{th:Gr(2,4)_2}.
%

A symplectic manifold $(X, \omega)$ is \emph{monotone}
if the cohomology class $[\omega]$ is positively proportional
to the first Chern class;
\begin{align}
 \exists \lambda > 0 \quad
 [\omega] = \lambda c_1(X).
\end{align}
The quantum cohomology ring of a monotone symplectic manifold
does not have any convergence issue,
and hence is defined over $\bC$.
A Lagrangian submanifold $L$ is \emph{monotone}
if the symplectic area of a disk bounded by $L$
is positively proportional to the Maslov index;
\begin{align}
 \exists \lambda > 0 \quad
 \forall \beta \in \pi_2(M,L) \quad
 \beta \cap \omega = \lambda \mu(\beta).
\end{align}
The $A_\infty$-operations
on the Lagrangian intersection Floer complex
of a monotone Lagrangian submanifold is defined over $\bC$.
The minimal Maslov number of oriented monotone Lagrangian submanifold
is greater than or equal to 2,
so that the obstruction class $\frakm_0(1)$
can be written as
$
 \frakm_0(1) = \frakm_0(L) \, \bfe_0,
$
where 
$\frakm_0(L) \in \bC$ is the count of Maslov index 2 disks
bounded by $L$,
weighted by their symplectic areas
and holonomies of a flat $U(1)$-bundle on $L$
along the boundaries of the disks.
The \emph{monotone Fukaya category}
is defined as the direct sum 
\begin{align}
 \cF(X) := \bigoplus_{\lambda \in \bC} \cF(X;\lambda),
\end{align}
where $\cF(X;\lambda)$ is an $A_\infty$-category over $\bC$
whose objects are monotone Lagrangian submanifolds,
equipped with flat $U(1)$-bundles,
satisfying $\frakm_0(L) = \lambda$.
For any monotone Lagrangian submanifold $L$,
there is a natural ring homomorphism
\begin{align}
 \QH(X) \to \HF(L,L),
\end{align}
which is known by Auroux \cite{Auroux_MSTD},
Kontsevich, and Seidel
to send $c_1(X) \in \QH(X)$
to $\frakm_0(1) \in \HF(L,L)$.
It follows that $\cF(X;\lambda)$ is trivial
unless $\lambda$ is an eigenvalue
of the quantum cup product
by $c_1(X)$.

Now consider the case when $X = \Gr(2,4)$,
which can be written as a quadric hypersurface
\begin{align}
 X = \lc [z_0:\cdots:z_5] \in \bP^5 \relmid
  z_0^2 = z_1^2 + \cdots + z_5^2 \rc.
\end{align}
The real locus $X_\bR$ is a monotone Lagrangian sphere,
which is the vanishing cycle along a degeneration into a nodal quadric
and split-generates the nilpotent summand $D^\pi \cF(X;0)$
of the monotone Fukaya category
\cite[Lemma 4.6]{MR2929086}.
The Floer cohomology $\HF(X_\bR,X_\bR)$ is semisimple,
and carries a formal $A_\infty$-structure
\cite[Lemma 4.7]{MR2929086}.
It follows that $D^\pi \cF(X;0)$ is equivalent
to the direct sum of two copies
of the derived category $D^b(\bC)$ of $\bC$-vector spaces.
On the other hand,
$(L(\bsu_0),\pm \pi \sqrt{-1}/2 \, \bfe_1)$ are also
objects of the nilpotent summand $D^\pi \cF(X;0)$
of the monotone Fukaya category,
which are non-zero by \eqref{eq:HF1}.
Since
$
 (L(\bsu_0),\pm \sqrt{-1}/2 \, \bfe_1)
$
is a pair of orthogonal non-zero objects
in a triangulated category
equivalent to $D^b(\bC) \oplus D^b(\bC)$,
they split-generate the whole category:

\begin{corollary}
The pair
$(L(\bsu_0),\pm \pi \sqrt{-1}/2 \, \bfe_1)$
split-generate $D^\pi \cF(\Gr(2,4) ; 0)$.
\end{corollary}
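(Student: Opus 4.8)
The plan is to reduce the corollary to a purely algebraic statement about the triangulated category $D^{b}(\bC) \oplus D^{b}(\bC)$. By \cite[Lemmas 4.6 and 4.7]{MR2929086}, recalled above, $X_{\bR}$ split-generates $D^{\pi}\cF(\Gr(2,4);0)$ and $\HF(X_{\bR},X_{\bR})$ is a semisimple, formal $A_{\infty}$-algebra isomorphic to $\bC \times \bC$; hence $D^{\pi}\cF(\Gr(2,4);0)$ is triangulated-equivalent to $D^{b}(\bC) \oplus D^{b}(\bC)$, with two mutually orthogonal simple objects $S_{1}, S_{2}$, each with endomorphism algebra $\bC$. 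First I would check that the pair $E_{\pm} := (L(\bsu_{0}), \pm \pi \sqrt{-1}/2 \, \bfe_{1})$ really defines objects of the $\lambda = 0$ summand: this is where one uses that $\Gr(2,4)$ is monotone together with the disk counts behind \pref{th:main2}, and in particular that $\frakm_{0}(L(\bsu_{0})) = 0$ because the potential function vanishes on $H^{1}(L(\bsu_{0}); \Lambda_{0})$ (part (4) of \pref{th:main2}). Granting this, it remains to prove that two non-zero, mutually orthogonal objects of $D^{b}(\bC) \oplus D^{b}(\bC)$ split-generate it.

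For that algebraic step, write an object $E$ as $E \cong E' \oplus E''$ with $E'$ in the first factor and $E''$ in the second. Since every object of $D^{b}(\bC)$ is a finite direct sum of shifts of $\bC$, in $D^{b}(\bC)$ one has $\bigoplus_{k} \mathrm{Hom}^{k}(A, B) = 0$ if and only if $A = 0$ or $B = 0$, and for orthogonal summands $\mathrm{Hom}^{*}(E'_{1} \oplus E''_{1}, E'_{2} \oplus E''_{2}) = \mathrm{Hom}^{*}(E'_{1}, E'_{2}) \oplus \mathrm{Hom}^{*}(E''_{1}, E''_{2})$. Now $E_{\pm}$ are non-zero by \eqref{eq:HF1}, and orthogonal by \eqref{eq:HFpm} (part (8) of \pref{th:main2}: the Floer cohomology of the pair with opposite bounding cochains vanishes), so $\mathrm{Hom}^{*}(E_{+}', E_{-}') = 0$ and $\mathrm{Hom}^{*}(E_{+}'', E_{-}'') = 0$; hence ($E_{+}' = 0$ or $E_{-}' = 0$) and ($E_{+}'' = 0$ or $E_{-}'' = 0$). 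Since neither $E_{+}$ nor $E_{-}$ vanishes, after possibly exchanging $E_{+}$ and $E_{-}$ we may assume $E_{+}' = 0$; then $E_{+}'' \neq 0$, which forces $E_{-}'' = 0$ and therefore $E_{-}' \neq 0$. Thus $E_{+}$ is a non-zero direct sum of shifts of $S_{2}$ and $E_{-}$ a non-zero direct sum of shifts of $S_{1}$; in particular some shift of $S_{2}$ is a retract of $E_{+}$ and some shift of $S_{1}$ is a retract of $E_{-}$. Hence the split-closed triangulated subcategory generated by $\{ E_{+}, E_{-} \}$ contains $S_{1}$ and $S_{2}$, and so equals $D^{\pi}\cF(\Gr(2,4);0)$.

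I expect the main obstacle to lie entirely in the first paragraph: making precise that the $\U(2)$-fibers equipped with the bounding cochains $\pm \pi \sqrt{-1}/2 \, \bfe_{1}$ are legitimate objects of the monotone Fukaya category over $\bC$, and matching the $\Lambda_{0}$-valued computations of \pref{th:main2} (and of \pref{th:Gr(2,4)}, \pref{th:Gr(2,4)_2}) with the $\bC$-valued monotone invariants via the usual specialization of the Novikov parameter for monotone Lagrangians and the reinterpretation of $b$ as a flat $\U(1)$-connection. Once this identification and the input from \cite{MR2929086} are in hand, the remainder is the formal bookkeeping in $D^{b}(\bC) \oplus D^{b}(\bC)$ carried out above.
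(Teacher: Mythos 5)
Your proposal follows essentially the same route as the paper: Smith's Lemmas 4.6 and 4.7 identify $D^\pi \cF(\Gr(2,4);0)$ with $D^b(\bC)\oplus D^b(\bC)$, the two objects are non-zero by \eqref{eq:HF1} and orthogonal by \eqref{eq:HFpm}, and a pair of non-zero orthogonal objects in such a category split-generates it. You merely spell out the final algebraic step and the membership in the $\lambda=0$ summand in more detail than the paper does (and the orthogonality statement is item (7), not (8), of \pref{th:main2}), but the argument is the same.
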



\emph{Acknowledgment}:
We thank Hiroshi Ohta, Kaoru Ono, and Yoshihiro Ohnita 
for useful conversations.
Y.~N. is supported
by Grant-in-Aid for Young Scientists (No.23740055).
K.~U. is supported
by Grant-in-Aid for Young Scientists (No.24740043).

\section{Non-torus fibers of the Gelfand-Cetlin system}\label{sc:GC_system}

\subsection{Flag manifolds}

For a sequence $0 = n_0 < n_1 < \dots < n_r < n_{r+1} =n$ of integers,
let  $F  = F(n_1 , \dots , n_r , n)$ be the  flag manifold
consisting of flags
\[
  0 \subset V_1 \subset \dots \subset V_r \subset \mathbb{C}^n,
  \quad \dim V_i = n_i
\]
of $\bC^n$.
We write the full flag manifold and the Grassmannian as
$\Fl(n) = F(1,2, \dots, n)$ and $\Gr(k,n) = F(k,n)$
respectively.
The complex dimension of $F(n_1, \dots , n_r , n)$ is given by
\[
  N= N(n_1, \dots , n_r, n) := 
  \dim_{\mathbb{C}} F(n_1, \dots , n_r, n) = 
  \sum_{i=1}^r (n_i - n_{i-1})(n - n_i ).
\]
Let $P = P(n_1, \dots , n_r, n) \subset \GL(n, \mathbb{C})$ 
be the stabilizer subgroup of the standard flag 
$
 (V_i = \langle e_1, \dots , e_{n_i} \rangle)_{i=1}^r,
$
where $\{ e_i \}_{i=1}^n$ is the standard basis of $\bC^n$.
The intersection of $P$ and $\U(n)$ is
$\U(k_1) \times \dots \times \U(k_{r+1})$ for
$k_i = n_i - n_{i-1}$, and
$F$ is written as
\[
  F = \GL(n,\mathbb{C}) / P
    = \U(n) / (\U(k_1) \times \dots \times \U(k_{r+1}) ).
\]

We take a $\U(n)$-invariant inner product 
$\langle x, y \rangle = \tr xy^*$ 
on the Lie algebra $\fraku (n)$ of $\U(n)$,
and identify the dual vector space $\fraku (n)^*$ of $\fraku(n)$ 
with the space $\sqrt{-1} \fraku (n)$ of Hermitian matrices.
For $\bslambda = \mathrm{diag}\, ( \lambda_1 , \dots , \lambda_n) 
\in \sqrt{-1} \fraku (n)$ with
\begin{equation}
   \underbrace{\lambda_1 = \dots = \lambda_{n_1}}_{k_1}
         > \underbrace{\lambda_{n_1 +1} = \dots = \lambda_{n_2}}_{k_2}
         > \dots > 
        \underbrace{\lambda_{n_r +1} = \dots = \lambda_n}_{k_{r+1}},
   \label{eq:lambda}
\end{equation}
the flag manifold $F$ is identified with the adjoint orbit $\scO_{\bslambda}
\subset \sqrt{-1} \fraku (n)$ of $\bslambda$.
Note that $\scO_{\bslambda}$ consists of Hermitian matrices with fixed
eigenvalues $\lambda_1, \dots, \lambda_n$.
Let 
\[
  \omega (\ad_{\xi}(x), \ad_{\eta}(x)) 
  = \frac{1}{2\pi} \langle x, [\xi, \eta] \rangle,
  \quad \xi, \eta \in \fraku(n)
\]
be the (normalized) Kostant-Kirillov form on $\scO_{\bslambda}$.

For each $i = 1 , \dots , r$, we set
$\bP_i := \bP \bigl( \bigwedge^{n_i} \bC^n \bigr)
 \cong \bP^{\binom {n}{n_i} -1}$.
Then the Pl\"ucker embedding is given by
\[
  \iota : F \hookrightarrow 
  \prod_{i=1}^{r} \mathbb{P}_i,
  \quad 
  (0 \subset V_1 \subset \dots \subset V_r \subset
         \mathbb{C}^n) \mapsto 
  (\textstyle{\bigwedge^{n_1}} V_1, \dots ,
   \textstyle{\bigwedge^{n_r}} V_r).
\]
Let $\omega_{\bP_i}$ be the Fubini-Study form on $\bP_i$ 
normalized in such a way that it represents the first Chern class 
$c_1(\scO(1))$ of the hyperplane bundle.
Then the Kostant-Kirillov form $\omega$
and the first Chern form $c_1(F)$ of $F$ 
are given by
\begin{align*}
 \omega
  &= \sum_{i=1}^r (\lambda_{n_i} - \lambda_{n_{i+1}}) \omega_{\bP_i}
\end{align*}
and
\begin{align*}
 c_1(F)
  &= \sum_{i=1}^r  (n_{i+1} - n_{i-1} )\omega_{\bP_i}
\end{align*}
respectively.

\begin{example}
The 3-dimensional full flag manifold  $\Fl(3)$ is embedded into 
\[
  \bP_1 \times \bP_2 
  = \bP (\bC^3) \times \bP (\textstyle{\bigwedge^2 \bC^3}) 
  \cong \bP^2 \times \bP^2
\]
as a hypersurface.
The image of $\Fl(3)$ is given by the Pl\"ucker relation
\[
  Z_1 Z_{23} + Z_2 Z_{31} + Z_3 Z_{12} = 0,
\]
where $[Z_1:Z_2:Z_3]$ and $[Z_{23}:Z_{31}:Z_{12}]$
are the Pl\"ucker coordinates 
on $\bP_1$ and $\bP_2$ respectively.
\end{example}

\begin{example}
The Grassmannian $\Gr(2,4)$ of 2-plans in $\bC^4$ is embedded into
$\bP(\bigwedge^2 \bC^4) \cong \bP^5$ as a hypersurface.
The Pl\"ucker relation is given by
\[
  Z_{12}Z_{34} - Z_{13}Z_{24} + Z_{14}Z_{23} = 0,
\]
where $[Z_{12} : Z_{13} : Z_{14} : Z_{23} : Z_{24} : Z_{34}]$ is the
Pl\"ucker coordinates.
\end{example}

\subsection{The Gelfand-Cetlin system}

For $x \in \mathcal{O}_{\bslambda}$ and $k = 1, \dots ,n-1$, 
let $x^{(k)}$ denote the upper-left $k \times k$ submatrix of $x$.
Since $x^{(k)}$ is also a Hermitian matrix, it has real eigenvalues
$\lambda^{(k)}_1(x) \ge \lambda^{(k)}_2(x) \ge \dots 
\ge \lambda^{(k)}_k(x)$.
By taking the eigenvalues for all $k = 1, \dots ,n-1$,
we obtain a set
$( \lambda^{(k)}_i )_{1 \le i \le k \le n-1}$
of $n(n-1)/2$ functions,
which satisfy the inequalities
\begin{equation}
\begin{alignedat}{17}
  \lmd 1 &&&& \lmd 2 &&&& \lmd 3 && \cdots && \lmd {n-1} &&&& \lmd n  \\
  & \uge && \dge && \uge && \dge &&&&&& \uge && \dge & \\
  && \llmd {n-1}1 &&&& \llmd {n-1}2 &&&&&&&& \llmd{n-1}{n-1} && \\
  &&& \uge && \dge &&&&&&&& \dge &&& \\
  &&&& \llmd {n-2}1 &&&&&&&& \llmd{n-2}{n-2} &&&& \\
  &&&&& \uge &&&&&& \dge &&&&& \\
  &&&&&& \dndots &&&& \updots &&&&&& \\
  &&&&&&& \uge && \dge &&&&&&& \\
  &&&&&&&& \llmd 11 &&&&&&&&& 
\end{alignedat}.
\label{GC-pattern}
\end{equation}
It follows that
the number of non-constant $\lambda^{(k)}_i$ 
coincides with $N = \dim_{\bC} F$.
Let $I = I(n_1, \dots, n_r, n)$ denotes the set of pairs $(i,k)$ such that 
$\lambda_i^{(k)}$ is non-constant.
Then the {\em Gelfand-Cetlin system} is defined by
\[
  \Phi = ( \lambda^{(k)}_i )_{(i,k) \in I}  : F(n_1, \dots, n_r,n)
  \longrightarrow 
  \mathbb{R}^{N(n_1, \dots, n_r,n)}.
\]

\begin{proposition}[Guillemin and Sternberg \cite{Guillemin-Sternberg_GCS}]
  The map $\Phi$ is a completely integrable system
  on $(F(n_1, \dots, n_r,n), \omega)$. 
  The functions $\lambda_i^{(k)}$ are action variables, 
  and the image $\Delta = \Phi(F)$ is a convex polytope
  defined by \eqref{GC-pattern}.
  The fiber $L(\bsu) = \Phi^{-1}(\bsu)$ over each interior point 
  $\bsu \in \Int \Delta$ is a Lagrangian torus.
\end{proposition}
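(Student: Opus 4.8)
The plan is to follow \cite{Guillemin-Sternberg_GCS} and exhibit the Gelfand--Cetlin functions as a collectively (``Thimm'') integrable system. Using the identification of $F$ with the adjoint orbit $\mathcal{O}_{\bslambda} \subset \sqrt{-1}\,\fraku(n)$ carrying the Kostant--Kirillov form, I would first observe that for each $k = 1, \dots, n-1$ the truncation
\[
  \pi_k \colon \mathcal{O}_{\bslambda} \longrightarrow \sqrt{-1}\,\fraku(k) \cong \fraku(k)^{*},
  \qquad x \longmapsto x^{(k)},
\]
is the moment map for the Hamiltonian action on $\mathcal{O}_{\bslambda}$ of the upper-left block subgroup $\U(k) \subset \U(n)$; in particular $\pi_k$ is a Poisson map to $\fraku(k)^{*}$ with its Kirillov--Kostant--Souriau bracket. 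Each $\lambda^{(k)}_i$ is then the pullback under $\pi_k$ of the $i$-th ordered eigenvalue function on $\sqrt{-1}\,\fraku(k)$.

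The second step is to prove that all the $\lambda^{(k)}_i$ Poisson commute. On the dense open subset of $\sqrt{-1}\,\fraku(k)$ with simple spectrum, each ordered eigenvalue is a smooth function of the power sums $\tr\bigl((x^{(k)})^{m}\bigr)$, $m = 1, \dots, k$; since these power sums are $\ad$-invariant they are Casimir functions on $\fraku(k)^{*}$, and so is any smooth function of them. Assuming $k \le l$ (the other case is symmetric), $\lambda^{(l)}_j$ is pulled back via the Poisson map $\pi_l$ from a Casimir on $\fraku(l)^{*}$, hence Poisson-commutes on $\mathcal{O}_{\bslambda}$ with every function that factors through $\pi_l$; since truncation to the $k\times k$ block factors through truncation to the $l\times l$ block, $\lambda^{(k)}_i$ is such a function, so $\{\lambda^{(k)}_i,\lambda^{(l)}_j\} = 0$.

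Next I would identify the image and produce the torus structure. Cauchy's interlacing theorem gives $\Phi(\mathcal{O}_{\bslambda}) \subseteq \Delta$, where $\Delta$ is the region defined by \eqref{GC-pattern}, and conversely every Gelfand--Cetlin pattern with top row $\bslambda$ is realized by an explicit (tridiagonal) Hermitian matrix of spectrum $\bslambda$, so $\Phi(\mathcal{O}_{\bslambda}) = \Delta$; as $\mathcal{O}_{\bslambda}$ is compact and connected, $\Delta$ is a full-dimensional convex polytope. Set $U := \Phi^{-1}(\Int\Delta)$. A combinatorial inspection of \eqref{GC-pattern} shows that on $U$ every non-constant $\lambda^{(k)}_i$ is a \emph{simple} eigenvalue of $x^{(k)}$ (its neighbours in the pattern are strictly separated from it over $\Int\Delta$), hence smooth near $U$, and its Hamiltonian flow --- the lift through $\pi_k$ of the coadjoint action of a circle subgroup of a maximal torus of $\U(k)$ --- is $2\pi$-periodic. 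By \eqref{GC-pattern} there are exactly $N = \dim_{\bC} F$ non-constant functions $\lambda^{(k)}_i$, so these generate a Hamiltonian $T^{N}$-action on $U$ with moment map $\Phi|_{U}$; checking at a suitable tridiagonal point of $U$ that the $d\lambda^{(k)}_i$ are there linearly independent shows this action is effective. An effective Hamiltonian $T^{N}$-action on the $2N$-dimensional $U$ makes $\Phi|_{U} \colon U \to \Int\Delta$ a Lagrangian $T^{N}$-bundle, so that each fiber $L(\bsu) = \Phi^{-1}(\bsu)$, $\bsu \in \Int\Delta$, is a Lagrangian $N$-torus and the $\lambda^{(k)}_i$ are action variables. (The Lagrangian property over $\Int\Delta$ also follows at once from the Arnold--Liouville theorem.)

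The step I expect to be the main obstacle is the behaviour along the walls of $\Delta$. The ordered eigenvalue functions are smooth only where the relevant spectra are simple, so one must argue carefully that on $U$ the commuting flows are genuinely \emph{periodic} (a $T^{N}$-action, not merely an $\bR^{N}$-action), that functional independence is not lost there, and that $\Phi$ nonetheless extends continuously to all of $\mathcal{O}_{\bslambda}$ with image exactly $\Delta$. This is the technical heart of the argument; the cleanest packaging is via Thimm's trick, together with the observation that $U$ carries an honest effective Hamiltonian torus action whose moment-map image is $\Int\Delta$, from which all of the asserted properties follow simultaneously.
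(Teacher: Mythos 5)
The paper offers no proof of this proposition: it is quoted verbatim from Guillemin--Sternberg \cite{Guillemin-Sternberg_GCS}, so there is nothing internal to compare your argument against. As a reconstruction of their argument your sketch is essentially correct and follows the standard route (Thimm's trick): the truncations $\pi_k(x)=x^{(k)}$ are moment maps for the nested block subgroups $\U(k)\subset\U(n)$, the symmetric functions of $x^{(k)}$ are Casimirs pulled back along the Poisson maps $\pi_k$, commutativity follows from the factorization $\pi_k = (\text{truncation})\circ\pi_l$ for $k\le l$, the image is identified via Cauchy interlacing plus an explicit realization of every pattern, and over $\Int\Delta$ the simple-spectrum condition makes the $\lambda^{(k)}_i$ smooth with $2\pi$-periodic flows. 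Two points deserve more care than you give them. First, the periodic flow of $\lambda^{(k)}_i$ is generated by conjugation by $\exp\bigl(\sqrt{-1}\,t\,P\bigr)$, where $P$ is the orthogonal projection onto the $\lambda^{(k)}_i$-eigenspace of $x^{(k)}$ extended by zero; since $P$ depends on the point $x$, this is not literally the action of a fixed circle subgroup of a maximal torus of $\U(k)$, though periodicity still follows from $\exp(2\pi\sqrt{-1}\,P)=I$. Second, an effective Hamiltonian $T^N$-action only shows that each regular fiber is a disjoint union of $N$-tori (each orbit being open and closed in the $N$-dimensional fiber); to conclude that $L(\bsu)$ is a \emph{single} torus you must also establish connectedness of the fibers over $\Int\Delta$, which in Guillemin--Sternberg comes out of the inductive eigenvalue-by-eigenvalue description of the fiber and is not addressed in your sketch.
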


The image $\Delta \subset \bR^{N(n_1, \dots, n_r,n)}$ 
is called the {\em Gelfand-Cetlin polytope}.
The Gelfand-Cetlin system is not smooth on the locus where
$\lambda_k^{(i)} = \lambda_k^{(i+1)}$ for some $(i,k)$, 
or equivalently, where the Gelfand-Cetlin pattern \eqref{GC-pattern}
contains a set of equalities of the form
\[
  \begin{alignedat}{5}
    && \gcbox{\lambda_{k+1}^{(i+1)}} && \\
    & \rueq && \lueq && \phantom{\uge} &\\
    \gcbox{\lambda_k^{(i)}} &&&& \gcbox{\lambda_k^{(i+1)}} && \\
    & \lueq && \rueq &&& \\
    && \gcbox{\lambda_{k-1}^{(i)}} &&&& 
  \end{alignedat}.
\]
The image of such loci are faces of $\Delta$
of codimension greater than two
where $\Delta$ does not satisfy the Delzant condition.
Away from such faces, each fiber $\Phi^{-1}(\bsu)$ of $\Phi$ is 
an isotropic torus whose dimension is that of the face of $\Delta$ 
containing $\bsu$ in its relative interior.


\subsection{The case of $\Fl(3)$} \label{sc:fiber_Fl(3)}

\begin{figure}
     \centering
      \includegraphics*{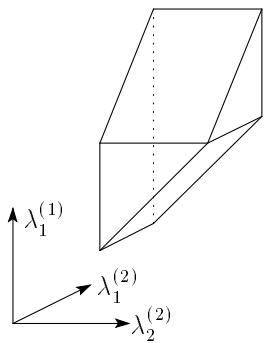}
     \caption{The Gelfand-Cetlin polytope for $\Fl(3)$}
     \label{fg:polytope_Fl(3)}
\end{figure}

After a translation by a scalar matrix, we may assume that
$\Fl (3)$ is identified with the adjoint orbit
of $\bslambda = \diag (\lambda_1, 0, -\lambda_2)$
for $\lambda_1, \lambda_2 > 0$.
Then the  Gelfand-Cetlin polytope $\Delta$ consists of 
$(u_1, u_2, u_3) \in \bR^3$ satisfying
\begin{equation}
  \begin{alignedat}{9}
    \gcbox{\lambda_1} &&&& \gcbox{0} &&&& \gcbox{-\lambda_2} \\
    & \uge && \dge && \uge && \dge &\\
    && \gcbox{u_1} &&&& \gcbox{u_2} && \\
    &&& \uge && \dge &&& \\
    &&&& \gcbox{u_3} &&&& 
  \end{alignedat}
  \label{eq:GC-pattern_Fl(3)}
\end{equation}
as shown in \pref{fg:polytope_Fl(3)}.
The non-smooth locus of $\Phi$ is the fiber $L_0 = \Phi^{-1}(\bszero)$
over the vertex $\bszero = (0,0,0) \in \Delta$
where four edges intersect.

\begin{definition}[Evans and Lekili {\cite[Definition 1.1.1]{1401.4073}}]
Let $K$ be a compact connected Lie group.
A Lagrangian submanifold $L$ in a K\"ahler manifold $X$ is 
said to be {\em $K$-homogeneous} if $K$ acts holomorphically on $X$ 
in such a way that $L$ is a $K$-orbit.
\end{definition}

\begin{proposition} \label{pr:SU(2)-fiber}
The fiber $L_0 = \Phi^{-1}(\bszero)$ is a Lagrangian 3-sphere
given by
\begin{equation*}
  L_0 = \left\{ \left.
    \begin{pmatrix} 
      0 & 0 & z_1 \\
      0 & 0 & z_2 \\
      \overline{z}_1 & \overline{z}_2 & \lambda_1 - \lambda_2
    \end{pmatrix}
    \in \sqrt{-1} \fraku (3) \right|
    |z_1|^2 + |z_2|^2 = \lambda_1 \lambda_2 \right\},
\end{equation*}
which is $K$-homogeneous for
\[
  K= 
  \left\{ \left.
    \begin{pmatrix} a_1 & -\overline{a}_2  & 0\\
    a_2 & \overline{a}_1 & 0 \\
    0 & 0 & 1 \end{pmatrix}
    \right|
    |a_1|^2+|a_2|^2=1
  \right\}
  \cong \SU(2).
\]
\end{proposition}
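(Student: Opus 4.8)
The plan is to describe $L_0$ explicitly as a subset of the adjoint orbit $\scO_{\bslambda}$ and then recognize it as a single $\SU(2)$-orbit. First I would use the Gelfand--Cetlin pattern \eqref{eq:GC-pattern_Fl(3)}: at the vertex $\bszero = (0,0,0)$ the constraints force $\lambda^{(1)}_1 = 0$ and $\lambda^{(2)}_1 = \lambda^{(2)}_2 = 0$, so a Hermitian matrix $x \in \scO_{\bslambda}$ lies in $L_0$ exactly when its upper-left $1\times 1$ block vanishes, i.e.\ $x_{11} = 0$, and its upper-left $2\times 2$ block $x^{(2)}$ is nilpotent Hermitian, hence $x^{(2)} = 0$. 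Writing a general Hermitian matrix with eigenvalues $\lambda_1, 0, -\lambda_2$ and vanishing $2\times 2$ block, I get
\[
  x = \begin{pmatrix} 0 & 0 & z_1 \\ 0 & 0 & z_2 \\ \overline{z}_1 & \overline{z}_2 & c \end{pmatrix},
\]
and I would pin down $c$ and $|z_1|^2 + |z_2|^2$ by matching the characteristic polynomial of $x$ to $(t-\lambda_1)t(t+\lambda_2)$: the trace gives $c = \lambda_1 - \lambda_2$, and comparing the coefficient of $t$ (or the $2\times 2$ minors) gives $|z_1|^2 + |z_2|^2 = \lambda_1\lambda_2$. This yields precisely the stated description of $L_0$, and shows $L_0 \cong S^3$ as a diffeomorphism type, being the unit sphere $\{(z_1,z_2) : |z_1|^2 + |z_2|^2 = \lambda_1\lambda_2\}$.

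Next I would verify that $K \cong \SU(2)$, embedded as the indicated block-diagonal subgroup of $\U(3)$ acting on $\scO_{\bslambda}$ by conjugation, acts holomorphically on $\Fl(3)$ (this is immediate, since $K \subset \U(3) \subset \GL(3,\bC)$ and $\GL(3,\bC)$ acts holomorphically on $\Fl(3) = \GL(3,\bC)/P$) and preserves $L_0$. Concretely, for $g = \left(\begin{smallmatrix} A & 0 \\ 0 & 1\end{smallmatrix}\right)$ with $A \in \SU(2)$, conjugation sends the matrix above to one of the same shape with $(z_1, z_2)^{t}$ replaced by $A(z_1,z_2)^{t}$; since $A$ is unitary this preserves $|z_1|^2 + |z_2|^2 = \lambda_1\lambda_2$, so $K \cdot L_0 = L_0$. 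Then I would check transitivity: $\SU(2)$ acts transitively on the unit sphere $S^3 = \{(z_1,z_2) : |z_1|^2+|z_2|^2 = \lambda_1\lambda_2\}$ in the standard way, and since the map $x \mapsto (z_1, z_2)$ identifies $L_0$ with that sphere $K$-equivariantly, the action on $L_0$ is transitive as well. Hence $L_0$ is a single $K$-orbit, i.e.\ $K$-homogeneous. Finally, $L_0$ is Lagrangian: its real dimension is $3 = \tfrac12 \dim_{\bR} \Fl(3)$, and isotropy follows either from the general fact in \pref{sc:GC_system} that fibers of $\Phi$ are isotropic, or directly because $L_0$ is an orbit of the compact group $K$ whose action is Hamiltonian and for which the restricted moment map is locally constant on $L_0$.

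The only genuinely delicate point is the claim that the $2\times 2$ block $x^{(2)}$ of $x \in L_0$ vanishes identically rather than merely being nilpotent: a priori $\lambda^{(2)}_1 = \lambda^{(2)}_2 = 0$ only says the eigenvalues of the Hermitian matrix $x^{(2)}$ are zero, but a Hermitian matrix with all eigenvalues zero is the zero matrix, so this is automatic. The remaining work — matching characteristic polynomials to extract $c = \lambda_1 - \lambda_2$ and $|z_1|^2 + |z_2|^2 = \lambda_1\lambda_2$, and checking the $\SU(2)$-action is holomorphic, orbit-preserving, and transitive — is routine linear algebra, so I would present it compactly rather than in full detail.
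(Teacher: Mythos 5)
Your argument matches the paper's proof in all essentials: you deduce $x^{(2)}=0$ from the vanishing of the eigenvalues of a Hermitian block, pin down $x_{33}=\lambda_1-\lambda_2$ and $|z_1|^2+|z_2|^2=\lambda_1\lambda_2$ from the characteristic polynomial, and identify $L_0$ with a single $\SU(2)$-orbit on the sphere $|z_1|^2+|z_2|^2=\lambda_1\lambda_2$. This is exactly what the paper does.

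The one point to correct is your first suggested justification of isotropy, namely the appeal to ``the general fact that fibers of $\Phi$ are isotropic.'' That general statement is only available where $\Phi$ is smooth, and the whole point of this section is that $\Phi$ fails to be smooth precisely along $L_0=\Phi^{-1}(\bszero)$ (the vertex $\bszero$ lies on the non-Delzant stratum where $\lambda^{(2)}_1=\lambda^{(2)}_2$); if the smooth-point statement applied, the fiber over a vertex would be a point rather than a $3$-sphere, so this route is circular at best. Your second route is the right one and is in fact the paper's argument in disguise: since $K$ acts transitively on $L_0$, the tangent space is spanned by the vectors $\ad_\xi(x)$ for $\xi\in\frakk$, and
\[
  \omega(\ad_\xi(x),\ad_\eta(x))
  = \frac{1}{2\pi}\langle x,[\xi,\eta]\rangle
  = \frac{\ii}{2\pi}\tr\Bigl(x^{(2)}[\xi^{(2)},\eta^{(2)}]\Bigr)=0
\]
because $x^{(2)}=0$ on $L_0$; equivalently, the moment map of the $K$-action is the projection of $x$ to $\frakk^*$, which vanishes identically on $L_0$. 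If you present the isotropy step, do it via this computation rather than by citing the torus-fiber statement.
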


\begin{proof}
Suppose that $x \in L_0$.
Then $\lambda_1^{(2)}(x) = \lambda_2^{(2)}(x) = 0$
implies that $x^{(2)} =0$ and thus $x$ has the form
\[
  x=
  \begin{pmatrix} 
      0 & 0 & z_1 \\
      0 & 0 & z_2 \\
      \overline{z}_1 & \overline{z}_2 & x_{33}
    \end{pmatrix}
\]
for some $z_1, z_2 \in \bC$ and $x_{33} \in \bR$.
Since
\[ 
  \det (\lambda - x)
  = \lambda \lb \lambda^2 - x_{33} \lambda - (|z_1|^2+|z_2|^2) \rb
  =0
\]
has solutions $\lambda = \lambda_1, 0, -\lambda_2$, 
we have
$x_{33} = \lambda_1 - \lambda_2$ and 
$|z_1|^2 + |z_2|^2 = \lambda_1 \lambda_2$.
Hence the fiber $L_0$ is the $K$-orbit of 
\[
  \begin{pmatrix} 
      0 & 0 & \sqrt{\lambda_1 \lambda_2} \\ 
      0 & 0 & 0 \\
      \sqrt{\lambda_1 \lambda_2} & 0 & \lambda_1 - \lambda_2
  \end{pmatrix} 
  = 
  \Ad_{g_0}
  \begin{pmatrix}
    \lambda_1 & 0 & 0 \\ 
    0 & 0 & 0 \\ 
    0 & 0 & -\lambda_2
  \end{pmatrix}
  \in \scO_{\bslambda},
\]
where 
\[
  g_0 =
    \begin{pmatrix}
    \sqrt{\lambda_2/(\lambda_1 + \lambda_2)} & 0 
      & -\sqrt{\lambda_1/(\lambda_1 + \lambda_2)}\\
    0 & 1 & 0 \\
    \sqrt{\lambda_1/(\lambda_1 + \lambda_2)} & 0 
      & \sqrt{\lambda_2/(\lambda_1 + \lambda_2)}
  \end{pmatrix}
  \in \SU(3).
\]

Next we see that $L_0$ is Lagrangian.
Since $K$ acts transitively on $L_0$, the tangent space $T_xL_0$ is 
spanned by infinitesimal actions $\ad_{\xi}(x)$ of $\xi \in \frakk$,
where 
\[
  \frakk = \biggl\{ \xi = 
    \begin{pmatrix} \xi^{(2)} & 0 \\ 0 & 0 \end{pmatrix}
    \in \fraku(3)
    \biggm| \xi^{(2)} \in \mathfrak{su}(2) \biggr\}
  \cong \mathfrak{su}(2)
\]
is the Lie algebra of $K$.
Since $x^{(2)} = 0$ for $x \in L_0$, we have
\[
 \omega(\ad_{\xi}(x), \ad_{\eta}(x)) 
  = \frac{\ii}{2\pi} \tr \Bigl( x^{(2)}[\xi^{(2)}, \eta^{(2)}] \Bigr)
  = 0
\]
for any $\xi, \eta \in \frakk$.
\end{proof}

Let $\iota : \Fl(3) \to \bP_1 \times \bP_2 
= \bP (\bC^3) \times \bP (\bigwedge^2 \bC^3)$
be the Pl\"ucker embedding and 
$([Z_1:Z_2:Z_3], [Z_{23}:Z_{31}:Z_{12}])$ be the Pl\"ucker coordinates.
The Kostant-Kirillov form is given by
\[
  \omega = 
  \lambda_1 \omega_{\bP_1} + \lambda_2 \omega_{\bP_2}.
\]
Since the Lagrangian fiber $L_0$
as a submanifold in $\SU(3)/T$
consists of 
\[
  \begin{pmatrix} a_1 & -\overline{a}_2  & 0\\
    a_2 & \overline{a}_1 & 0 \\
    0 & 0 & 1 
  \end{pmatrix}
  g_0
  = \frac 1{\sqrt{\lambda_1+\lambda_2}}
  \begin{pmatrix}
    \sqrt{\lambda_2} a_1 
      & -\sqrt{\lambda_1+\lambda_2} \overline{a}_2 
      & -\sqrt{\lambda_1} a_1\\
    \sqrt{\lambda_2} a_2 
      & \sqrt{\lambda_1+\lambda_2} \overline{a}_1 
      & -\sqrt{\lambda_1} a_2\\
    \sqrt{\lambda_1} & 0 & \sqrt{\lambda_2}
  \end{pmatrix}
  \mod T
\]
with $|a_1|^2+|a_2|^2=1$,
the image $\iota(L_0)$ is given by
\begin{equation}
  \iota (L_0) = 
  \Biggl\{
  \Biggm (\left[ a_1:a_2: \sqrt{\frac{\lambda_1}{\lambda_2}} \right], 
   \left[\overline{a}_1: \overline{a}_2: 
     -\sqrt{\frac{\lambda_2}{\lambda_1}} \right] \Biggm)
   \, \Biggm|\, |a_1|^2+|a_2|^2=1 \Biggr\}.
  \label{eq:lagS^3}
\end{equation}
Define an anti-holomorphic involution 
$\tau$ on $\Fl(3)$ by
\begin{align}
 \tau \lb [Z_1:Z_2:Z_3], [Z_{23}:Z_{31}:Z_{12}] \rb
  &= \lb
         \ld
          \Zbar_{23}:\Zbar_{31}:
           - \frac{\lambda_1}{\lambda_2} \Zbar_{12}
         \rd,
         \ld
          \overline{Z}_1:\overline{Z}_2:
             - \frac{\lambda_2}{\lambda_1} \overline{Z}_3
         \rd
        \rb.
  \label{eq:involution_Fl(3)}
\end{align}

\begin{proposition}
The Lagrangian $L_0$ is the fixed point set of $\tau$.
\end{proposition}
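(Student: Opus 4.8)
The plan is to prove the equality $\Fix(\tau)=\iota(L_0)$ by a direct computation in the Plücker coordinates, identifying $\Fl(3)$ with its image under the Plücker embedding $\iota$ and using the explicit description \eqref{eq:lagS^3} of $\iota(L_0)$ together with the defining formula \eqref{eq:involution_Fl(3)} of $\tau$. (One first checks that \eqref{eq:involution_Fl(3)} preserves the Plücker relation $Z_1Z_{23}+Z_2Z_{31}+Z_3Z_{12}=0$, so that $\tau$ is indeed a well-defined involution of $\Fl(3)$; this amounts to observing that applying $\tau$ replaces the left-hand side by its complex conjugate.) The inclusion $\iota(L_0)\subseteq\Fix(\tau)$ is the easy one: plugging a point $\left(\left[a_1:a_2:\sqrt{\lambda_1/\lambda_2}\right],\left[\overline{a}_1:\overline{a}_2:-\sqrt{\lambda_2/\lambda_1}\right]\right)$ of $\iota(L_0)$ into \eqref{eq:involution_Fl(3)} and using the identities $\frac{\lambda_1}{\lambda_2}\sqrt{\lambda_2/\lambda_1}=\sqrt{\lambda_1/\lambda_2}$ and $\frac{\lambda_2}{\lambda_1}\sqrt{\lambda_1/\lambda_2}=\sqrt{\lambda_2/\lambda_1}$ returns the same point.

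For the reverse inclusion $\Fix(\tau)\subseteq\iota(L_0)$, I would take a fixed point $p=([Z_1:Z_2:Z_3],[Z_{23}:Z_{31}:Z_{12}])\in\Fl(3)$ of $\tau$, so that there exist $\mu,\nu\in\bC^\times$ with
\[
 (\overline{Z}_{23},\overline{Z}_{31},-\tfrac{\lambda_1}{\lambda_2}\overline{Z}_{12})=\mu\,(Z_1,Z_2,Z_3),\qquad
 (\overline{Z}_1,\overline{Z}_2,-\tfrac{\lambda_2}{\lambda_1}\overline{Z}_3)=\nu\,(Z_{23},Z_{31},Z_{12}).
\]
I would first rule out $Z_3=0$: in that case the third components force $Z_{12}=0$, and then the Plücker relation together with $Z_{23}=\overline{Z}_1/\nu$ and $Z_{31}=\overline{Z}_2/\nu$ gives $|Z_1|^2+|Z_2|^2=0$, which is impossible for a point of $\bP_1$; hence $Z_3\neq0$, and then $Z_{12}\neq0$ as well. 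Comparing the third components of the two identities now yields $\mu\overline{\nu}=1$. Since rescaling the homogeneous coordinates of the two projective factors by $t$ and $s$ changes $(\mu,\nu)$ into $(\overline{s}\mu/t,\,\overline{t}\nu/s)$, choosing $t=1$ and $s=1/\overline{\mu}$ (and using $\mu\overline{\nu}=1$) normalises $\mu=\nu=1$, and the residual $\bC^\times$ of rescalings of the form $(t,\overline{t})$ can be used to arrange $Z_3=\sqrt{\lambda_1/\lambda_2}$. With these normalisations the fixed-point equations become $Z_{23}=\overline{Z}_1$, $Z_{31}=\overline{Z}_2$, $Z_{12}=-\sqrt{\lambda_2/\lambda_1}$, the Plücker relation reads $|Z_1|^2+|Z_2|^2=1$, and setting $a_1=Z_1$, $a_2=Z_2$ exhibits $p$ in the form \eqref{eq:lagS^3}.

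The only delicate point is the bookkeeping of the scalars $\mu,\nu$ and the two projective rescalings, and I expect this to be the sole (minor) obstacle; it becomes cleaner if one works from the outset in the affine charts $\{Z_3\neq0\}\subset\bP_1$ and $\{Z_{12}\neq0\}\subset\bP_2$. As a sanity check, one may observe that $\Fix(\tau)$ is a totally real submanifold of $\Fl(3)$ of real dimension $3=\dim_{\bC}\Fl(3)$ which contains the compact connected $3$-manifold $\iota(L_0)$, so that $\iota(L_0)$ is automatically a union of connected components of $\Fix(\tau)$; the explicit computation above upgrades this to the desired equality without having to establish the connectedness of $\Fix(\tau)$ separately.
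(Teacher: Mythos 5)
Your argument is correct, and it follows the same route the paper implicitly takes: the paper states this proposition without proof, as an immediate consequence of the explicit parametrization \eqref{eq:lagS^3} of $\iota(L_0)$ and the formula \eqref{eq:involution_Fl(3)} for $\tau$. Your computation — including the bookkeeping of the scalars $\mu,\nu$ with $\mu\bar\nu=1$ and the use of the residual rescalings $(t,\bar t)$ to normalise $Z_3=\sqrt{\lambda_1/\lambda_2}$ — correctly supplies the details of the reverse inclusion that the paper leaves to the reader.
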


One can easily see that $\tau$ is an anti-symplectic involution 
if and only if $\lambda_1 = \lambda_2$.


\subsection{The case of $\Gr(2,4)$} \label{sc:fiber_Gr(2,4)}

For $k < n$, let $\Vtilde (k,n)$ be the space of $n \times k$ matrices of rank $k$, and set
\[
  V(k,n) = \{ Z \in \Vtilde (k,n) \mid Z^* Z = I_k \}.
\]
Then the Grassmannian $\Gr(k,n)$ is given by
\[
  \Gr(k,n) = \Vtilde (k,n) / \GL(k, \bC) = V(k,n) / \U(k).
\]

We first consider the Gelfand-Cetlin system on $\Gr(n, 2n)$ for general $n$.
Fix $\lambda >0$ and identify $\Gr(n,2n)$ with the adjoint orbit 
$\scO_{\bslambda}$ of
\begin{align*}
  \bslambda &= 
  \diag(\underbrace{\lambda, \dots, \lambda}_n, 
                    \underbrace{-\lambda, \dots, -\lambda}_n).
\end{align*}
The orbit $\scO_{\bslambda}$ consists of matrices of the form
$2 \lambda Z Z^* - \lambda I_{2n}$ for $Z \in V(n, 2n)$.
The  Gelfand-Cetlin polytope $\Delta$ of $\Gr(n,2n)$ consists of 
$\bsu = (u_i^{(k)})_{(i,k) \in I} \in \bR^{n^2}$ satisfying
\[
  \begin{alignedat}{13}
    &&&&&& \gcbox{u_n^{(2n-1)}} \\
    &&&&& \dge && \uge \\
    \gcbox{\lambda} &&&& \gcbox{\updots} &&&& \gcbox{\dndots} &&&& \gcbox{-\lambda} \\
    & \gcbox{\uge} && \gcbox{\dge} &&&&&& \gcbox{\uge} && \gcbox{\dge} \\
    && \gcbox{u_1^{(n)}} &&&& \gcbox{\cdots} &&&& \gcbox{u_n^{(n)}} \\
    &&& \gcbox{\uge} &&&&&& \gcbox{\dge} \\
    &&&& \gcbox{\dndots} &&&& \gcbox{\updots} \\
    &&&&& \gcbox{\uge} && \gcbox{\dge} \\
    &&&&&& \gcbox{u_1^{(1)}}
  \end{alignedat}.
\]
For $-\lambda < t < \lambda$, let $L_t = \Phi^{-1}(t, \dots, t)$ 
be the fiber over the boundary point
$u_1^{(1)} = \dots = u^{(2n-1)}_{n} = t$
of $\Delta$.

\begin{proposition}
The fiber $L_t$ is a Lagrangian submanifold given by
\[
  L_t = \biggl\{
  \begin{pmatrix}
    t I_n & \sqrt{\lambda^2 -t^2} A^* \\
    \sqrt{\lambda^2 -t^2} A & -t I_n 
  \end{pmatrix}
  \in \sqrt{-1} \fraku (2n)
  \biggm| A \in \U(n) \biggr\}
  \cong \U(n),
\]
which is $K$-homogeneous for
\[
  K = \left\{ \left. 
        \begin{pmatrix} P & 0 \\ 0 & I_n \end{pmatrix} 
        \in \U(2n) 
        \, \right| \, 
        P \in \U(n) \right\} \cong \U(n).
\]
\end{proposition}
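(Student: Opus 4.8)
The plan is to argue along the lines of the proof of \pref{pr:SU(2)-fiber}. First I would pin down the matrix shape of the points of $L_t$. In the Gelfand--Cetlin pattern of $\Gr(n,2n)$ the eigenvalues $\lambda_1^{(n)}(x) \ge \dots \ge \lambda_n^{(n)}(x)$ of the upper-left block $x^{(n)}$ are among the non-constant coordinates, hence are all equal to $t$ on $L_t$; since $x^{(n)}$ is Hermitian, this forces $x^{(n)} = t I_n$. Since $x \in \scO_{\bslambda}$ has eigenvalues $\pm\lambda$, we also have $x^2 = \lambda^2 I_{2n}$. Writing $x = \begin{pmatrix} t I_n & B \\ B^* & D \end{pmatrix}$ with $D \in \sqrt{-1}\,\fraku(n)$ and expanding $x^2 = \lambda^2 I_{2n}$ in blocks yields $B B^* = (\lambda^2 - t^2) I_n$ and $B(t I_n + D) = 0$. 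Since $-\lambda < t < \lambda$, the first relation makes $B$ invertible, so the second gives $D = -t I_n$, and then $B = \sqrt{\lambda^2 - t^2}\, A^*$ with $A := (\lambda^2 - t^2)^{-1/2} B^* \in \U(n)$. This gives one inclusion in the asserted description of $L_t$.

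For the reverse inclusion I would take $x$ of the asserted form with $A \in \U(n)$; using $A A^* = A^* A = I_n$, a direct block computation gives $x^2 = \lambda^2 I_{2n}$ and $\tr x = 0$, so $x \in \scO_{\bslambda}$, while $x^{(n)} = t I_n$ immediately gives $\lambda_i^{(k)}(x) = t$ for $k \le n$. The step that needs care --- and the only genuinely computational point in the whole proof --- is to verify that $x$ lies over the prescribed boundary point, i.e.\ that the remaining Gelfand--Cetlin functions $\lambda_i^{(n+j)}(x)$ ($1 \le j \le n-1$) take the value $t$ away from the forced values $\pm\lambda$. For this I would note that
\[
  x^{(n+j)} = \begin{pmatrix} t I_n & \sqrt{\lambda^2 - t^2}\, A_j^* \\
  \sqrt{\lambda^2 - t^2}\, A_j & -t I_j \end{pmatrix},
\]
where $A_j$ is the submatrix of the first $j$ rows of $A$, so that $A_j A_j^* = I_j$, and then solve the eigenvalue equation directly: the vectors $\begin{pmatrix} u \\ 0 \end{pmatrix}$ with $u \in \ker A_j$ give the eigenvalue $t$ with multiplicity $n-j$, and for each $v \in \bC^j$ the vectors $\begin{pmatrix} \pm(\lambda \mp t)^{-1}\sqrt{\lambda^2 - t^2}\, A_j^* v \\ v \end{pmatrix}$ are eigenvectors with eigenvalues $\lambda$, resp.\ $-\lambda$, which accounts for the remaining $2j$ eigenvalues. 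Hence the Gelfand--Cetlin image of $x$ is exactly the point defining $L_t$, so $x \in L_t$. Finally, $A \mapsto x$ is a diffeomorphism from $\U(n)$ onto $L_t$ with inverse $(\lambda^2 - t^2)^{-1/2}$ times the lower-left block, so $L_t \cong \U(n)$.

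It then remains to show that $L_t$ is a Lagrangian $K$-orbit. Conjugation by $g = \diag(P, I_n) \in K$ sends the matrix with datum $A$ to the one with datum $A P^*$, and since $\{ A P^* \mid P \in \U(n) \} = \U(n)$, the group $K$ acts transitively on $L_t$; as $K \subset \U(2n) \subset \GL(2n,\bC)$ acts holomorphically on $\Gr(n,2n)$ and this action corresponds, under $\scO_{\bslambda} \cong \Gr(n,2n)$, to the adjoint action on $\scO_{\bslambda}$, it follows that $L_t$ is $K$-homogeneous. For the Lagrangian property, $\dim_{\bR} L_t = \dim \U(n) = n^2 = \tfrac12 \dim_{\bR} \Gr(n,2n)$, so it suffices to see that $L_t$ is isotropic. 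As in \pref{pr:SU(2)-fiber}, $T_x L_t$ is spanned by the infinitesimal actions $\ad_{\xi}(x)$ of elements $\xi = \diag(\xi^{(n)}, 0)$ of the Lie algebra $\frakk \cong \fraku(n)$ of $K$, and since $x^{(n)} = t I_n$,
\[
  \omega(\ad_{\xi}(x), \ad_{\eta}(x))
   = \frac{\sqrt{-1}}{2\pi}\tr\bigl(x^{(n)}[\xi^{(n)}, \eta^{(n)}]\bigr)
   = \frac{\sqrt{-1}\, t}{2\pi}\tr[\xi^{(n)}, \eta^{(n)}] = 0
\]
for all $\xi, \eta \in \frakk$, so $L_t$ is Lagrangian. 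The one real obstacle is the block-eigenvalue computation for $x^{(n+j)}$ in the second paragraph; everything else runs formally parallel to the $\Fl(3)$ case.
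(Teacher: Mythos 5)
Your argument is correct, and it reaches the same block form by a slightly different route than the paper. The paper parametrizes the orbit as $x = 2\lambda Z Z^* - \lambda I_{2n}$ with $Z \in V(n,2n)$, reads off $Z_1 \in \sqrt{(\lambda+t)/2\lambda}\,\U(n)$ from $x^{(n)} = tI_n$, and normalizes $Z_1$ by the right $\U(n)$-action to land on the stated form; you instead work directly with the Hermitian matrix and use the minimal-polynomial identity $x^2 = \lambda^2 I_{2n}$ (together with $\tr x = 0$) to force $BB^* = (\lambda^2 - t^2)I_n$ and $D = -tI_n$. Both derivations are clean; the Stiefel-coordinate route generalizes immediately to the other Grassmannian fibers treated later in the paper, while yours is self-contained at the level of adjoint orbits. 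You are also more scrupulous than the paper about the reverse inclusion: the paper only exhibits $L_t$ inside the claimed $K$-orbit and treats the converse as implicit (it follows, e.g., because $K$ preserves all the Gelfand--Cetlin functions on the locus where $x^{(n)}$ is scalar, so the nonempty $K$-invariant set $L_t$ must exhaust the orbit), whereas you verify directly that every matrix of the stated form has $x^{(n+j)}$ with spectrum $\{\lambda^{(j)}, t^{(n-j)}, (-\lambda)^{(j)}\}$; your eigenvector computation there checks out. The $K$-homogeneity ($A \mapsto AP^*$) and the isotropy argument via $\tr\bigl(x^{(n)}[\xi^{(n)},\eta^{(n)}]\bigr) = 0$ for scalar $x^{(n)}$ are identical to the paper's.
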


\begin{proof}
We write $x \in \scO_{\bslambda}$ as
\[
  x= 2 \lambda Z Z^* - \lambda I_{2n}
   = \lambda 
     \begin{pmatrix}
       2 Z_1 Z_1^* - I_n & 2 Z_1 Z_2^* \\
       2 Z_2 Z_1^* & 2 Z_2 Z_2^* - I_n
     \end{pmatrix}	
\] 
for $n \times n$ matrices $Z_1$, $Z_2$ with
\[
  Z = \begin{pmatrix} Z_1 \\ Z_2 \end{pmatrix}
  \in V(n, 2n).
\]
Suppose that $x \in L_t$, or equivalently,
$\lambda_1^{(n)} (x) = \dots = \lambda_n^{(n)}(x) = t$.
Then the upper-left $n \times n$ block of $x$ satisfies
\[ 
  x^{(n)} = 2 \lambda Z_1 Z_1^* - \lambda I_n = tI_n,
\]
which means that $Z_1 \in \sqrt{(\lambda + t)/2\lambda} \U(n)$. 
After the right $\U(n)$-action on $V(n, 2n)$, 
we may assume that 
$Z_1 = \sqrt{(\lambda + t)/2\lambda} I_n$.
Then the condition $Z^* Z = I_n$ implies that
\[
  Z_2^* Z_2 = I_n - \frac{\lambda + t}{2 \lambda} I_n
  = \frac{\lambda - t}{2\lambda} I_n.
\]
Hence $Z$ has the form
\begin{equation}
  Z = \begin{pmatrix}
    \sqrt{(\lambda + t)/2 \lambda} I_n \\
    \sqrt{(\lambda - t)/2 \lambda} A
  \end{pmatrix}
  \in V(n, 2n)
  \label{eq:U(n)fiber_in_V} 
\end{equation}
for some $A \in \U(n)$,
which shows that
\[
  x = 2 \lambda Z Z^* - \lambda I_{2n} =
  \begin{pmatrix}
    t I_n & \sqrt{\lambda^2 -t^2} A^* \\
    \sqrt{\lambda^2 -t^2} A & -t I_n 
  \end{pmatrix}.
\]
The $K$-homogeneity is obvious from this expression.
Since 
the tangent space $T_xL_t$ is spanned by the infinitesimal action of 
the Lie algebra $\frakk$ of $K$, 
and $x^{(n)} = t I_n$ is a scalar matrix, 
we have 
\[
  \omega_x (\ad_{\xi}(x), \ad_{\eta}(x) ) 
  = \frac 1{2\pi} \tr x^{(n)} [\xi^{(n)}, \eta^{(n)}] = 0
\]
for 
\[
  \xi = \begin{pmatrix} \xi^{(n)} & \\ & 0 \end{pmatrix}, \ 
  \eta = \begin{pmatrix} \eta^{(n)} & \\ & 0 \end{pmatrix}
  \in \frakk,
\]
which shows that $L_t$ is Lagrangian.
\end{proof}

\begin{corollary} \label{cr:dislpaceable_Gr(2,4)}
For $t \ne 0$, the fiber $L_t$ is displaceable, 
i.e., there exists a Hamiltonian diffeomorphism $\varphi$ on $\Gr(n,2n)$ 
such that $\varphi (L_t) \cap L_t = \emptyset$.
\end{corollary}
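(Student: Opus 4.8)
The plan is to exhibit an explicit Hamiltonian diffeomorphism of $\Gr(n,2n)$ that carries $L_t$ onto $L_{-t}$, which is disjoint from $L_t$ as soon as $t \neq 0$. Consider the block-swap matrix and its skew-Hermitian scalar multiple,
\[
  g = \begin{pmatrix} 0 & I_n \\ I_n & 0 \end{pmatrix},
  \qquad
  N = \sqrt{-1}\, g = \begin{pmatrix} 0 & \sqrt{-1}\, I_n \\ \sqrt{-1}\, I_n & 0 \end{pmatrix} \in \fraku(2n).
\]
Since $N^2 = -I_{2n}$, one has $\exp\!\big(\tfrac{\pi}{2}N\big) = N = \sqrt{-1}\, g$, and as $\sqrt{-1}\, g$ and $g$ differ by the central element $\sqrt{-1}\, I_{2n}$, the conjugation $\varphi := \Ad_g$ on $\scO_{\bslambda} \cong \Gr(n,2n)$ equals $\Ad_{\exp(\pi N/2)}$. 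The path $s \mapsto \Ad_{\exp(sN)}$, $s \in [0,\pi/2]$, is the Hamiltonian flow of the fundamental vector field of $N$ for the Hamiltonian $\U(2n)$-action on the coadjoint orbit, so $\varphi$ is a Hamiltonian diffeomorphism of $(\Gr(n,2n),\omega)$.

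Next I would compute $\varphi(L_t)$. Writing $x = 2\lambda Z Z^* - \lambda I_{2n}$ with $Z \in V(n,2n)$ and using $gg^* = I_{2n}$, we get $\varphi(x) = 2\lambda (gZ)(gZ)^* - \lambda I_{2n}$, so $\varphi$ acts on the Grassmannian by $Z \mapsto gZ$. For $x \in L_t$, representation \eqref{eq:U(n)fiber_in_V} lets us take
\[
  Z = \begin{pmatrix} \sqrt{(\lambda + t)/2\lambda}\, I_n \\ \sqrt{(\lambda - t)/2\lambda}\, A \end{pmatrix}, \qquad A \in \U(n),
\]
so that $gZ = \begin{pmatrix} \sqrt{(\lambda - t)/2\lambda}\, A \\ \sqrt{(\lambda + t)/2\lambda}\, I_n \end{pmatrix}$ and the upper-left $n \times n$ block of $\varphi(x)$ equals $2\lambda \cdot \tfrac{\lambda - t}{2\lambda}\, A A^* - \lambda I_n = -t\, I_n$. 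Hence $\varphi(x)^{(n)} = -t\, I_n$, i.e.\ $\varphi(L_t) \subseteq L_{-t}$ (and in fact $\varphi(L_t) = L_{-t}$, since $\varphi^2 = \mathrm{id}$).

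Finally, $L_t \cap L_{-t} = \emptyset$ whenever $t \neq 0$: a point $x$ lying in both would satisfy $x^{(n)} = t\, I_n$ and $x^{(n)} = -t\, I_n$ simultaneously. Therefore $\varphi(L_t) \cap L_t = \emptyset$, which proves the corollary. I do not anticipate a real obstacle here; the only step requiring a little care is confirming that the block-swap conjugation is Hamiltonian rather than merely symplectic, and this is settled by the factorization $\exp(\pi N/2) = \sqrt{-1}\, g$ together with the fact that the $\U(2n)$-action on the orbit is Hamiltonian. (Alternatively, connectedness of $\U(2n)$ alone already makes $\Ad_g$ Hamiltonian for every $g$.)
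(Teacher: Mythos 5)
Your proof is correct and takes essentially the same route as the paper, which simply exhibits a unitary block-swap $g = \left(\begin{smallmatrix} 0 & -I_n \\ I_n & 0 \end{smallmatrix}\right) \in \U(2n)$ with $g(L_t) = L_{-t}$ and leaves the rest implicit. Your version merely uses $\left(\begin{smallmatrix} 0 & I_n \\ I_n & 0 \end{smallmatrix}\right)$ instead and spells out the two points the paper omits (that conjugation by an element of the connected group $\U(2n)$ is Hamiltonian, and that $L_t \cap L_{-t} = \emptyset$ since the upper-left $n\times n$ block cannot equal both $tI_n$ and $-tI_n$), all of which is fine.
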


\begin{proof}
One has $g (L_t) = L_{-t}$ for
$
  g = 
  \begin{pmatrix} 0 & -I_n \\ I_n & 0 \end{pmatrix}
  \in \U(2n).
$
\end{proof}


In the rest of this subsection,  
we restrict ourselves to the case of $\Gr(2,4)$.
We write $(u_1, u_2, u_3, u_4) = (u_2^{(3)}, u_1^{(2)}, u_2^{(2)}, u_1^{(1)})$
for simplicity.
Figure \ref{fg:polytope(2,4)} shows the projection 
\[
  \Delta \longrightarrow [-\lambda, \lambda], \quad 
  \bsu=(u_1, u_2, u_3, u_4) \longmapsto u_1.
\]
\begin{figure}
  \centering
    \includegraphics*{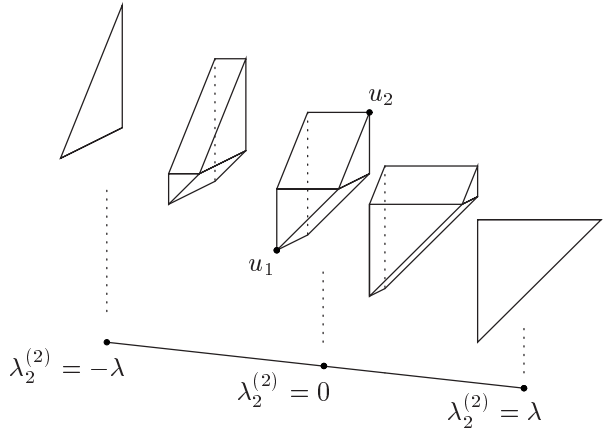}
  \caption{The Gelfand-Cetlin polytope for $\Gr(2,4)$}
  \label{fg:polytope(2,4)}
\end{figure}
The non-smooth locus of $\Phi$ is the inverse image of the edge of $\Delta$
defined by $u_1= \dots = u_4$.
The fiber $L_t$ over $(t,t,t,t) \in \partial \Delta$ is a Lagrangian submanifold
consists of
$2\lambda ZZ^* - \lambda I_{2n}$ with
\[
  Z = \frac {1}{\sqrt {2\lambda}}
        \begin{pmatrix}
          \sqrt{\lambda +t} I_2  \\
          \sqrt{\lambda -t} A 
        \end{pmatrix}
  \mod \U(2) 
\] 
for $A \in \U(2)$.
We identify $\U(2)$ with $\U(1) \times \SU(2) \cong S^1 \times S^3$
by
\[
  \U(1) \times \SU(2) \longrightarrow \U(2),
  \quad \left( a_0, 
  \begin{pmatrix} a_1 & -\overline{a}_2 \\ 
    a_2 & \overline{a}_1 \end{pmatrix}
  \right)
  \longmapsto
  \begin{pmatrix} a_0 & 0 \\ 0 & 1
  \end{pmatrix}
  \begin{pmatrix} a_1 & - \overline{a}_2 \\ 
    a_2 & \overline{a}_1 \end{pmatrix}.
\]
Then the image of $L_t$ under the Pl\"ucker embedding 
$\iota: \Gr(2,4) \to \bP (\bigwedge^2 \bC^4) \cong \bP^5$ is given by
\[
  \iota (L_t) = 
  \Biggl\{ \biggl[ \sqrt{\frac{\lambda +t}{\lambda -t}} : 
  - a_0 \overline{a}_2 : \overline{a}_1 : - a_0 a_1: - a_2: 
  \sqrt{\frac{\lambda -t}{\lambda +t}} a_0
  \biggr] 
  \Biggm| |a_0|^2 = |a_1|^2+|a_2|^2 =  1 \Biggr\}.
\]

This expression implies the following.

\begin{proposition}
For each $t \in (-\lambda, \lambda)$, 
we define an anti-holomorphic involution $\tau_t$
on $\Gr(2,4)$ defined by
\begin{equation}
  \tau_t([Z_{12}: Z_{13}: Z_{14}: Z_{23}: Z_{24}: Z_{34}]) 
  = \biggl[ \frac{\lambda +t}{\lambda -t} \overline{Z}_{34}: 
     \overline{Z}_{24}: -\overline{Z}_{23}:
     -\overline{Z}_{14}: \overline{Z}_{13}: 
     \frac{\lambda -t}{\lambda +t}\overline{Z}_{12} \biggr]
  \label{eq:involution_Gr(2,4)}
\end{equation}
Then $L_t$ is the fixed point set of $\tau_t$.
\end{proposition}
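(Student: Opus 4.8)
The plan is to read the statement off the explicit description of $\iota(L_t)$ obtained just above, by an elementary computation in $\bP^5$. First I would check that $\tau_t$ is well defined: writing it as complex conjugation of the homogeneous coordinates followed by the projective linear map $[Z_{12}:Z_{13}:Z_{14}:Z_{23}:Z_{24}:Z_{34}]\mapsto[\tfrac{\lambda+t}{\lambda-t}Z_{34}:Z_{24}:-Z_{23}:-Z_{14}:Z_{13}:\tfrac{\lambda-t}{\lambda+t}Z_{12}]$, one sees that the Pl\"ucker quadric $Z_{12}Z_{34}-Z_{13}Z_{24}+Z_{14}Z_{23}=0$ is carried to its complex conjugate, so $\tau_t(\Gr(2,4))=\Gr(2,4)$; that $\tau_t$ is anti-holomorphic; and that $\tau_t^2=\mathrm{id}$, since the factors $\tfrac{\lambda\pm t}{\lambda\mp t}$ multiply to $1$ and the signs cancel in the four middle coordinates.

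For the inclusion $\iota(L_t)\subseteq\Fix(\tau_t)$ I would substitute a point $[\,\sqrt{\tfrac{\lambda+t}{\lambda-t}}:-a_0\overline{a}_2:\overline{a}_1:-a_0a_1:-a_2:\sqrt{\tfrac{\lambda-t}{\lambda+t}}a_0\,]$ with $|a_0|=1$ and $|a_1|^2+|a_2|^2=1$ into the formula for $\tau_t$ and simplify using $\tfrac{\lambda+t}{\lambda-t}\sqrt{\tfrac{\lambda-t}{\lambda+t}}=\sqrt{\tfrac{\lambda+t}{\lambda-t}}$; the image equals the original point after multiplying all coordinates by the nonzero scalar $a_0$. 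This is routine.

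The substance is the reverse inclusion $\Fix(\tau_t)\subseteq\iota(L_t)$. Let $p=[Z_{12}:\cdots:Z_{34}]\in\Gr(2,4)$ with $\tau_t(p)=p$, i.e.\ there is $\mu\in\bC^{\times}$ with $\tfrac{\lambda+t}{\lambda-t}\overline{Z}_{34}=\mu Z_{12}$, $\overline{Z}_{24}=\mu Z_{13}$, $-\overline{Z}_{23}=\mu Z_{14}$, $-\overline{Z}_{14}=\mu Z_{23}$, $\overline{Z}_{13}=\mu Z_{24}$, $\tfrac{\lambda-t}{\lambda+t}\overline{Z}_{12}=\mu Z_{34}$. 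Pairing these relations with their conjugates and using that $p$ lies on the Pl\"ucker quadric and is nonzero, a short case analysis gives $Z_{12}\neq 0$ (if $Z_{12}=0$ then $Z_{34}=0$ and the quadric forces $|Z_{13}|^2+|Z_{14}|^2=0$, hence $p=0$) and $|\mu|=1$. Rescaling the homogeneous coordinates so that $Z_{12}=\sqrt{\tfrac{\lambda+t}{\lambda-t}}$ and putting $a_0:=\overline{\mu}$, $a_1:=\overline{Z}_{14}$, $a_2:=-Z_{24}$, the six relations yield exactly $Z_{13}=-a_0\overline{a}_2$, $Z_{14}=\overline{a}_1$, $Z_{23}=-a_0a_1$, $Z_{24}=-a_2$, $Z_{34}=\sqrt{\tfrac{\lambda-t}{\lambda+t}}a_0$, with $|a_0|=1$; finally the Pl\"ucker relation becomes $a_0(1-|a_1|^2-|a_2|^2)=0$, so $|a_1|^2+|a_2|^2=1$ and $p\in\iota(L_t)$. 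Together with the previous step this proves $\Fix(\tau_t)=L_t$.

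I expect the only delicate point to be this last step --- namely extracting $|\mu|=1$ and $Z_{12}\neq 0$ from the defining relations and the Pl\"ucker equation --- rather than anything geometric. One could instead argue that $\Fix(\tau_t)$ is a totally real submanifold of real dimension $\dim_{\bC}\Gr(2,4)=4=\dim L_t$, so that the compact connected submanifold $\iota(L_t)$ is a union of connected components of $\Fix(\tau_t)$; but one would still have to rule out further components, which is precisely what the direct computation does, so the computational route is the cleaner one.
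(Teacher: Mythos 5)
Your proposal is correct, and it follows the route the paper intends: the paper offers no proof beyond the remark that the explicit parametrization of $\iota(L_t)$ ``implies'' the proposition, which really only makes the inclusion $\iota(L_t)\subseteq\Fix(\tau_t)$ evident. Your verification of the reverse inclusion --- extracting $|\mu|=1$ from conjugate pairs of the fixed-point relations, ruling out $Z_{12}=0$ via the Pl\"ucker quadric, and normalizing to recover the parameters $a_0,a_1,a_2$ --- is sound (the rescaling factor in the forward inclusion is $\overline{a}_0$ rather than $a_0$, which is immaterial projectively) and supplies exactly the detail the paper omits.
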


\begin{remark}
The map $\tau_0$ for $t=0$ is an anti-symplectic involution as well, 
and satisfies $\tau_0(L_t) = L_{-t}$ for each $t \in (-\lambda, \lambda)$.
\end{remark}

\subsection{The case of $\Gr (2,5)$} \label{sc:fiber_Gr(2,5)}

We fix $\lambda >0$ and identify $\Gr(2,5)$ with the 
adjoint orbit $\scO_{\bslambda}$ of 
$\diag (\lambda, \lambda, 0,0,0) \in \ii \fraku (5)$.
The Gelfand-Cetlin polytope $\Delta$ is defined by
\begin{equation}
  \begin{alignedat}{11}
    \gcbox{\lambda} &&&& \gcbox{u_1}  \\
    & \uge && \dge && \uge \\
    && \gcbox{u_2} &&&& \gcbox{u_3} &&&& \gcbox{0} \\
    &&& \uge && \dge && \uge && \dge \\
    &&&& \gcbox{u_4} &&&& \gcbox{u_5}  \\
    &&&&& \uge && \dge \\
    &&&&&& \gcbox{u_6}
\end{alignedat}
\label{eq:GCpattern_Gr(2,5)}
\end{equation}

We first consider the fiber $L_1(s_1, s_2, t)$ over
a boundary point given by
\[
  \begin{alignedat}{11}
    \gcbox{\lambda} &&&& \gcbox{s_2}  \\
    & \ugne && \dgne && \ugne \\
    && \gcbox{s_1} &&&& \gcbox{t} &&&& \gcbox{0} \\
    &&& \ugne && \deq && \ueq && \dgne \\
    &&&& \gcbox{t} &&&& \gcbox{t}  \\
    &&&&& \ueq && \deq \\
    &&&&&& \gcbox{t}
\end{alignedat}.
\]

\begin{proposition}
  The fiber $L_1(s_1, s_2, t)$ is a Lagrangian submanifold diffeomorphic 
  to $\U(2) \times T^2 \cong S^3 \times T^3$.
  Moreover, $L_1(s_1, s_2, t)$ is $K$-homogeneous for
  \[
    K = \left\{ \left.
    \begin{pmatrix} 
      P \\ 
      & e^{\sqrt{-1}\theta_1} \\
      && e^{\sqrt{-1}\theta_2} \\
      &&& 1
    \end{pmatrix} \in \U(5)
    \right|
    P \in \U(2), \, \theta_1, \theta_2 \in \bR
    \right\}
    \cong 
    \U(2) \times T^2.
  \]
\end{proposition}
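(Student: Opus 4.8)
The plan is to run the same scheme as in the proofs of \pref{pr:SU(2)-fiber} and of the corresponding statement for $\Gr(n,2n)$: write $\scO_{\bslambda}$ concretely in terms of frames, strip off the Gelfand--Cetlin conditions level by level to get an explicit description of $L_1(s_1,s_2,t)$, and then verify $K$-transitivity, triviality of the stabilizer, and the Lagrangian condition.

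First I would model $\scO_{\bslambda}$ for $\Gr(2,5)$ as $\{\lambda Z Z^* \mid Z \in V(2,5)\}$ and split $Z = \binom{Z_1}{Z_2}$ with $Z_1$ of size $2\times 2$ and $Z_2$ of size $3\times 2$. The equalities $\lambda^{(2)}_1 = \lambda^{(2)}_2 = t$ say $x^{(2)} = \lambda Z_1 Z_1^* = t I_2$, so $Z_1 \in \sqrt{t/\lambda}\,\U(2)$; after the right $\U(2)$-action on $V(2,5)$ we may take $Z_1 = \sqrt{t/\lambda}\, I_2$, and then $Z^*Z = I_2$ forces $Z_2 = \sqrt{(\lambda-t)/\lambda}\, V$ with $V \in V(2,3)$, exactly as in the proof for $\Gr(n,2n)$. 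Writing $\rho_1,\rho_2,\rho_3 \in \bC^2$ for the rows of $V$, the submatrix $x^{(k)}$ is $\lambda$ times the Gram matrix of the first $k$ rows of $Z$, so the non-zero eigenvalues of $x^{(3)}$ are those of $t I_2 + (\lambda-t)\rho_1^*\rho_1$ and those of $x^{(4)}$ are those of $t I_2 + (\lambda-t)(I_2 - \rho_3^*\rho_3)$, using $\rho_1^*\rho_1 + \rho_2^*\rho_2 + \rho_3^*\rho_3 = I_2$. A short computation then turns $\lambda^{(3)} = (s_1,t,0)$ and $\lambda^{(4)} = (\lambda,s_2,0,0)$ into $|\rho_1|^2 = p_1$ and $|\rho_3|^2 = 1 - p_2$, where $p_1 := (s_1-t)/(\lambda-t)$ and $p_2 := (s_2-t)/(\lambda-t)$; the level-$1$ condition is automatic. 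Hence $L_1(s_1,s_2,t)$ is identified with $\{V \in V(2,3) \mid |\rho_1|^2 = p_1,\ |\rho_3|^2 = 1 - p_2\}$, and the strict inequalities in the defining pattern say precisely $0 < p_2 < p_1 < 1$.

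Next I would check that $K$ preserves this fiber: conjugation by $g = \diag(P, e^{\sqrt{-1}\theta_1}, e^{\sqrt{-1}\theta_2}, 1)$ fixes $x^{(2)} = tI_2$, hence all of $\lambda^{(1)}$ and $\lambda^{(2)}$, and acts on $x^{(k)}$ for $k \ge 3$ by a unitary conjugation, hence preserves $\lambda^{(k)}_i$ for $k \ge 3$. In the model above the induced action is $(\rho_1,\rho_2,\rho_3) \mapsto (e^{\sqrt{-1}\theta_1}\rho_1 P^*,\ e^{\sqrt{-1}\theta_2}\rho_2 P^*,\ \rho_3 P^*)$. Given a point of $L_1(s_1,s_2,t)$, I would first use $P$ to rotate $\rho_3$ to $\sqrt{1-p_2}\,(0,1)$ (possible since $|\rho_3| \ne 0$), which pins $P$ up to left multiplication by $\diag(e^{\sqrt{-1}\psi},1)$; afterwards $\rho_1^*\rho_1 + \rho_2^*\rho_2 = \diag(1,p_2)$, and imposing $|\rho_1|^2 = p_1$ pins the moduli of all four entries of $W := \binom{\rho_1}{\rho_2}$ to fixed non-zero values (here $0 < p_2 < p_1 < 1$ is used), so $W$ depends only on three phases. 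The residual torus $\{(\diag(e^{\sqrt{-1}\psi},1),\theta_1,\theta_2)\} \cong T^3$ acts on these three phases through a homomorphism $T^3 \to T^3$ represented by an integer matrix of determinant $\pm 1$, hence simply transitively. Therefore $K$ acts transitively and freely on $L_1(s_1,s_2,t)$, which yields a diffeomorphism $L_1(s_1,s_2,t) \cong K = \U(2) \times T^2 \cong S^3 \times T^3$.

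Finally, for the Lagrangian condition I would argue as before: by transitivity $T_x L_1$ is spanned by the $\ad_{\xi}(x)$ with $\xi$ in the Lie algebra $\frakk = \{\diag(\xi^{(2)}, \sqrt{-1}\theta_1, \sqrt{-1}\theta_2, 0) \mid \xi^{(2)} \in \fraku(2)\}$ of $K$, and since $\frakk$ is block diagonal for the partition $\{1,2\}\sqcup\{3\}\sqcup\{4\}\sqcup\{5\}$ one has $[\xi,\eta] = \diag([\xi^{(2)},\eta^{(2)}],0,0,0)$, so
\[
  \omega_x\bigl(\ad_{\xi}(x), \ad_{\eta}(x)\bigr)
  = \tfrac{1}{2\pi}\tr\bigl(x^{(2)}[\xi^{(2)},\eta^{(2)}]\bigr)
  = \tfrac{t}{2\pi}\tr\,[\xi^{(2)},\eta^{(2)}] = 0
\]
because $x^{(2)} = t I_2$ is scalar; as $\dim_{\bR} L_1 = 6 = \tfrac12 \dim_{\bR}\Gr(2,5)$, the isotropic $L_1$ is Lagrangian. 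The step I expect to be the main obstacle is the middle one: translating the three non-trivial Gelfand--Cetlin conditions at levels $2,3,4$ into the clean description of $L_1$ as a codimension-two subset of the Stiefel manifold $V(2,3)$, and then verifying that the residual torus acts on the remaining three phases by a unimodular matrix. Once this is in place, the $K$-invariance and the Lagrangian computation are direct adaptations of the arguments for $\Fl(3)$ and $\Gr(2,4)$.
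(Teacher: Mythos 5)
Your argument is correct and follows essentially the same route as the paper: write $x=\lambda ZZ^*$ for $Z\in V(2,5)$, use $x^{(2)}=tI_2$ to make the top $2\times 2$ block of $Z$ unitary, reduce the remaining Gelfand--Cetlin conditions to norm conditions on rows $3,4,5$, and deduce both $K$-homogeneity and the Lagrangian property from the fact that $x^{(2)}$ is scalar. The only difference is bookkeeping: the paper spends the right $\U(2)$-gauge on rows $3$--$5$ (normalizing $(z_5,w_5)$ by the right $\SU(2)$-action and then rows $3,4$ by the residual $\U(1)$) so as to exhibit an explicit normal form for $Z$, whereas you spend it on the top block and then verify transitivity and freeness abstractly via the unimodular action of the residual $T^3$ on the phase torus --- both work, and your passage to $Z^{(k)*}Z^{(k)}$ cleanly explains why only the trace conditions at levels $3$ and $4$ are needed.
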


\begin{proof}
Note that $\scO_{\bslambda}$ consists of matrices of the form
\begin{align} \label{eq:251}
  x = \lambda Z Z^*
  = \lambda ( z_i \overline{z}_j + w_i \overline{w}_j)_{1 \le i, j \le 5}
\end{align}
for
\[
  Z = \begin{pmatrix} 
          z_1 & w_1 \\
          z_2 & w_2 \\
          z_3 & w_3 \\
          z_4 & w_4 \\
          z_5 & w_5 
      \end{pmatrix}
  \in V(2,5), 
\]
i.e., 
\begin{align} \label{eq:252}
  \sum_{i=1}^5 |z_i|^2 = \sum_{i=1}^5 |w_i|^2 = 1, \quad
  \sum_{i=1}^5 z_i \overline{w}_i = 0.
\end{align}
Since the upper-left $2 \times 2$ submatrix of 
$x = \lambda ( z_i \overline{z}_j + w_i  \overline{w}_j) 
\in L_1(s_1, s_2, t)$ satisfies
\begin{align} \label{eq:253}
  x^{(2)} = \lambda \begin{pmatrix}
    |z_1|^2 + |w_1|^2 & z_1 \overline{z}_2 + w_1  \overline{w}_2 \\
    z_2 \overline{z}_1 + w_2 \overline{w}_1 & |z_2|^2 + |w_2|^2
  \end{pmatrix}
  = \begin{pmatrix} t & 0 \\ 0 & t \end{pmatrix}, 
\end{align}
we have
\begin{align} \label{eq:254}
  \sqrt{\frac{\lambda}{t}}
  \begin{pmatrix} z_1 & w_1 \\ z_2 & w_2 \end{pmatrix}
  \in \U(2),
\end{align}
and in particular, 
$|z_1|^2 + |z_2|^2 = |w_1|^2 + |w_2|^2 = t/\lambda$.
Then the condition \eqref{eq:252} implies
\begin{align}
 |z_3|^2 + |z_4|^2 + |z_5|^2 &= (\lambda - t)/\lambda,
  \label{eq:cond_L1_1} \\
 |w_3|^2 + |w_4|^2 + |w_5|^2 &= (\lambda - t)/\lambda,
  \label{eq:cond_L1_2} \\
 z_3 \overline{w}_3 + z_4 \overline{w}_4 + z_5 \overline{w}_5 &= 0.
  \label{eq:cond_L1_3}
\end{align}
On the other hand, the conditions $\tr x^{(3)} = s_1 + t$,
$\tr x^{(4)} = \lambda + s_2$ imply
\begin{align}
 |z_3|^2 + |w_3|^2 &= (s_1 - t)/ \lambda,
  \label{eq:cond_L1_4} \\
 |z_4|^2 + |w_4|^2 &= (\lambda - s_1 + s_2 - t)/ \lambda,
  \label{eq:cond_L1_5} \\
 |z_5|^2 + |w_5|^2 &= (\lambda - s_2)/ \lambda.
  \label{eq:cond_L1_6}
\end{align}
After the right $\SU(2)$-action on $(z,w)$, we may assume that
$(z_5, w_5) = \bigl( \sqrt{(\lambda - s_2)/ \lambda} , 0 \bigr)$. 
Then \eqref{eq:cond_L1_1}, \eqref{eq:cond_L1_2}, and \eqref{eq:cond_L1_3}
become 
\begin{align*}
 |z_3|^2 + |z_4|^2  &= (s_2 - t)/\lambda,
  \\
 |w_3|^2 + |w_4|^2  &= (\lambda - t)/\lambda,
  \\
 z_3 \overline{w}_3 + z_4 \overline{w}_4  &= 0,
\end{align*}
which mean that the $2 \times 2$ submatrix $(z_i, w_i)_{i=3,4}$ has the form
\[
  \begin{pmatrix} z_3 & w_3 \\ z_4 & w_4 \end{pmatrix}
  =
  \begin{pmatrix}
    \sqrt{(s_2 - t)/\lambda} \, a 
      & -\sqrt{(\lambda - t)/\lambda} \, \overline{b} c \\
    \sqrt{(s_2 - t)/\lambda} \, b 
      & \sqrt{(\lambda - t)/\lambda} \, \overline{a} c
  \end{pmatrix}
\]
for some
\[
  \begin{pmatrix} a & - \overline{b} \\ b & \overline{a} \end{pmatrix}
  \in \SU(2),
  \quad
  c \in \U(1).
\]
Combining this with \eqref{eq:cond_L1_4} and \eqref{eq:cond_L1_5} we have
\[
  |a|^2 = \frac{\lambda - s_1}{\lambda - s_2},
  \quad
  |b|^2 = \frac{s_1 - s_2}{\lambda - s_2},
\]
and hence
\[
  \begin{pmatrix} z_3 & w_3 \\ z_4 & w_4 \end{pmatrix}
  = \frac{1}{\sqrt{\lambda (\lambda - s_2)}}
   \begin{pmatrix}
    \sqrt{(s_2 - t)(\lambda - s_1)} \, e^{\sqrt{-1} \theta_1} 
      & -\sqrt{(\lambda - t)(s_1 - s_2)} \, e^{- \sqrt{-1} \theta_2} c \\
    \sqrt{(s_2 - t)(s_1 - s_2)} \, e^{\sqrt{-1} \theta_2} 
      & \sqrt{(\lambda - t)(\lambda - s_1)} \, e^{- \sqrt{-1} \theta_1} c
  \end{pmatrix}
\]
for some $\theta_1, \theta_2 \in \bR$.
After the action of 
\[
  \left\{ \left.
  \begin{pmatrix} 1 & 0 \\ 0 & e^{\sqrt{-1} \varphi} \end{pmatrix}
  \in \U(2) \right|
  \varphi \in \bR 
  \right\}
\cong \U(1)
\]
from the right, we may assume that
\[
  \begin{pmatrix} z_3 & w_3 \\ z_4 & w_4 \end{pmatrix}
  = \frac{1}{\sqrt{\lambda (\lambda - s_2)}}
   \begin{pmatrix}
    \sqrt{(s_2 - t)(\lambda - s_1)} \, e^{\sqrt{-1} \theta_1} 
      & -\sqrt{(\lambda - t)(s_1 - s_2)} \, e^{\sqrt{-1} \theta_1} \\
    \sqrt{(s_2 - t)(s_1 - s_2)} \, e^{\sqrt{-1} \theta_2} 
      & \sqrt{(\lambda - t)(\lambda - s_1)} \, e^{ \sqrt{-1} \theta_2}
  \end{pmatrix}.
\]
Therefore $Z = (z_i, w_i)_i$ is normalized as
\[
  \begin{pmatrix} z_1 & w_1 \\ \vdots & \vdots \\ z_5 & w_5 \\
  \end{pmatrix}
  =
   \begin{pmatrix}
   z_1 & w_1 \\ 
   z_2 & w_2 \\
    \sqrt{(s_2 - t)(\lambda - s_1)/\lambda (\lambda - s_2)} \, e^{\sqrt{-1} \theta_1} 
      & -\sqrt{(\lambda - t)(s_1 - s_2)/\lambda (\lambda - s_2)} \, e^{\sqrt{-1} \theta_1} \\
    \sqrt{(s_2 - t)(s_1 - s_2)/\lambda (\lambda - s_2)} \, e^{\sqrt{-1} \theta_2} 
      & \sqrt{(\lambda - t)(\lambda - s_1) /\lambda(\lambda - s_2)} \, e^{ \sqrt{-1} \theta_2} \\
    \sqrt{(\lambda - s_2)/ \lambda} & 0
  \end{pmatrix}
\]
with \eqref{eq:254},
which implies that $L_1(s_1, s_2, t)$ is a $K$-orbit and diffeomorphic to 
$\U(2) \times T^2$.

The assertion that $L_1(s_1, s_2, t)$ is Lagrangian follows from 
the $K$-homogeneity as in the cases of $\Fl(3)$ and $\Gr(n,2n)$.
\end{proof}

Next we consider the fiber $L_2(s_1, s_2, t)$ over
\[
  \begin{alignedat}{11}
    \gcbox{\lambda} &&&& \gcbox{t}  \\
    & \ugne && \deq && \ueq \\
    && \gcbox{t} &&&& \gcbox{t} &&&& \gcbox{0} \\
    &&& \ueq && \deq && \ugne && \dgne \\
    &&&& \gcbox{t} &&&& \gcbox{s_1}  \\
    &&&&& \ugne && \dgne \\
    &&&&&& \gcbox{s_2}
\end{alignedat}.
\]
Suppose that 
$x = \lambda (z_i \overline{z}_j + w_i \overline{w}_j)_{1 \le i, j \le 5}
\in L_2(s_1, s_2, t)$.
The condition that 
$x^{(3)} = \lambda (z_i \overline{z}_j + w_i \overline{w}_j)_{1 \le i, j \le 3}$ 
has eigenvalues $t, t, 0$ is equivalent to
\begin{align}
  |z_1|^2 + |z_2|^2 + |z_3|^2 &= t/\lambda, \\
  |w_1|^2 + |w_2|^2 + |w_3|^2 &= t/\lambda, \\
  z_1 \overline{w}_1 + z_2 \overline{w}_2 + z_3 \overline{w}_3 &= 0,
\end{align}
and hence 
\[
  \sqrt{\frac{\lambda}{\lambda - t}}
  \begin{pmatrix}
    z_4 & w_4 \\ z_5 & w_5 
  \end{pmatrix}
  \in U(2).
\]
On the other hand, the conditions $x^{(1)} = s_2$, 
$\tr x^{(2)} = t + s_1$, and $\tr x^{(3)} = 2t$ imply
\begin{align*}
  |z_1|^2 + |w_1|^2 &= s_2 / \lambda, \\
  |z_2|^2 + |w_2|^2 &= (t - s_2 + s_1)/ \lambda, \\
  |z_3|^2 + |w_3|^2 &= (t - s_1)/ \lambda.
\end{align*}
Then we have the following.

\begin{proposition} \label{pr:dislpaceable_Gr(2,5)}
  The fiber $L_2(s_1, s_2, t)$ is a $\U(2) \times T^2$-homogeneous Lagrangian submanifold
  diffeomorphic to $\U(2) \times T^2 \cong S^3 \times T^3$.
  Moreover, the fibers $L_1(s_1, s_2, t)$ and $L_2(s_1, s_2, t)$ satisfy
  \[
    g(L_2(s_1, s_2, t)) = L_1(\lambda - s_1, \lambda - s_2, \lambda - t)
  \]
  for
  \[
    g = \begin{pmatrix}
         0 & & 1 \\ & \updots &  \\ 1 & & 0
        \end{pmatrix}
    \in \U(5).
  \]
  In particular, 
  $L_1(s_1, s_2, t)$ and $L_2(s_1, s_2, t)$ are displaceable.
\end{proposition}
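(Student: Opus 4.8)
The plan is to establish the three assertions --- that $L_2(s_1,s_2,t)$ is a $\U(2)\times T^2$-homogeneous Lagrangian submanifold diffeomorphic to $\U(2)\times T^2\cong S^3\times T^3$, that $g(L_2(s_1,s_2,t))=L_1(\lambda-s_1,\lambda-s_2,\lambda-t)$, and that $L_1(s_1,s_2,t)$ and $L_2(s_1,s_2,t)$ are displaceable --- in the order: the $g$-identity first, then the homogeneity and diffeomorphism type, then displaceability, since once the first is in hand the other two are almost formal.

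First I would determine how conjugation by the reversal permutation $g\in\U(5)$ acts on the Gelfand-Cetlin coordinates of a point of $\scO_{\bslambda}$. Writing $x=\lambda ZZ^*$ with $Z\in V(2,5)$, one has $gxg^{-1}=\lambda(gZ)(gZ)^*$, and the upper-left $k\times k$ block of $(gZ)(gZ)^*$ equals $N_kN_k^*$, where $N_k$ denotes the bottom $k$ rows of $Z$. Splitting $Z$ into its top $5-k$ rows $P_k$ and its bottom $k$ rows $N_k$, the relation $Z^*Z=I_2$ gives $N_k^*N_k=I_2-P_k^*P_k$; hence the nonzero eigenvalues of $N_kN_k^*$ are the numbers $1-\mu$, where $\mu$ runs over the eigenvalues of $P_k^*P_k$, i.e.\ over the nonzero eigenvalues of $(ZZ^*)^{(5-k)}$. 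Since interlacing pins, for each $k$, a definite number of the eigenvalues of $x^{(k)}$ to $0$ or to $\lambda$, translating this eigenvalue dictionary back into Gelfand-Cetlin coordinates should yield the rule $u_i(gxg^{-1})=\lambda-u_{7-i}(x)$ for $i=1,\dots,6$. Feeding in the explicit patterns --- $L_2(s_1,s_2,t)$ is cut out by $(u_1,\dots,u_6)=(t,t,t,t,s_1,s_2)$ and $L_1(s_1,s_2,t)$ by $(u_1,\dots,u_6)=(s_2,s_1,t,t,t,t)$ --- this rule carries the $L_2$-pattern exactly onto the $L_1$-pattern with parameters $(\lambda-s_1,\lambda-s_2,\lambda-t)$, which is the asserted identity; symmetrically it gives $g(L_1(s_1,s_2,t))=L_2(\lambda-s_1,\lambda-s_2,\lambda-t)$.

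Granting this, the homogeneity and diffeomorphism type follow at once: the Kostant-Kirillov form is $\U(5)$-invariant, so $x\mapsto gxg^{-1}$ is a symplectomorphism of $\scO_{\bslambda}$, and therefore $L_2(s_1,s_2,t)=g^{-1}\bigl(L_1(\lambda-s_1,\lambda-s_2,\lambda-t)\bigr)$ is Lagrangian, is homogeneous under the conjugate $g^{-1}Kg\cong\U(2)\times T^2$ of the isotropy group $K$ of that $L_1$-fiber, and is diffeomorphic to that fiber, hence to $\U(2)\times T^2\cong S^3\times T^3$. If one prefers a self-contained argument, one can instead finish the normalization of $Z$ begun just before the proposition, presenting $L_2(s_1,s_2,t)$ as the orbit of a single standard matrix under $K=\{\operatorname{diag}(1,e^{\sqrt{-1}\theta_1},e^{\sqrt{-1}\theta_2},P)\mid P\in\U(2),\ \theta_1,\theta_2\in\bR\}\cong\U(2)\times T^2$, in complete parallel with the computation of $L_1(s_1,s_2,t)$; the Lagrangian property is then immediate because the lower-right $2\times2$ block of every such $x$ is the scalar matrix $(\lambda-t)I_2$, so $\omega$ vanishes on the tangent space exactly as in the cases of $\Fl(3)$ and $\Gr(n,2n)$. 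For displaceability I would use that $\U(5)$ is compact and connected and acts on the adjoint orbit $\scO_{\bslambda}$ by the Hamiltonian Kostant-Kirillov action, so that conjugation by $g$ is a Hamiltonian diffeomorphism of $\scO_{\bslambda}$; by the two identities above it carries each of $L_1(s_1,s_2,t)$ and $L_2(s_1,s_2,t)$ to a fiber of $\Phi$ over a point of $\Delta$ distinct from the original one (as the patterns show, the strict inequalities among $s_1,s_2,t$ preventing any coincidence), and distinct fibers of $\Phi$ are disjoint, so $g$ displaces both fibers, exactly as in Corollary~\ref{cr:dislpaceable_Gr(2,4)}.

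The step I expect to be the main obstacle is the derivation of the coordinate rule $u_i\mapsto\lambda-u_{7-i}$: one has to keep the ordering of eigenvalues straight and, in particular, handle the extreme blocks $k=1$ and $k=4$ separately, since there the relevant $2\times2$ Gram matrix drops rank and the count of the forced $0$- and $\lambda$-eigenvalues differs from the generic $k$. Once that identity is pinned down and matched against the indexing in \eqref{eq:GCpattern_Gr(2,5)}, the rest of the proof is either immediate or a verbatim repetition of arguments already carried out for $L_1$ and for $\Gr(n,2n)$.
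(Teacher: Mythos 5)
Your proposal is correct and in substance the same as the paper's: the paper establishes the homogeneity, the diffeomorphism type, and the Lagrangian property by the normalization of $Z$ carried out just before the proposition (in parallel with the $L_1$ case), and obtains the $g$-identity and displaceability by direct verification exactly as in Corollary \ref{cr:dislpaceable_Gr(2,4)}. Your derivation of the coordinate rule $u_i \mapsto \lambda - u_{7-i}$ from the Gram-matrix identity $N_k^* N_k = I_2 - P_{5-k}^* P_{5-k}$ is a correct and explicit way to carry out the verification that the paper leaves to the reader.
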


The Hamiltonian isotopy invariance of the Floer cohomology 
over the Novikov field \cite[Theorem G]{Fukaya-Oh-Ohta-Ono}
implies the following.

\begin{corollary} \label{cr:Gr25}
For $i=1, 2$, we have
\[
   \HF((L_i(s_1, s_2, t), b), (L_i(s_1, s_2, t), b) ; \Lambda) = 0
\]
for any weak bounding cochain $b$.
\end{corollary}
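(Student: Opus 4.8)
The plan is to deduce \pref{cr:Gr25} directly from \pref{pr:dislpaceable_Gr(2,5)} together with the Hamiltonian isotopy invariance of Lagrangian intersection Floer cohomology over the Novikov field. Recall that \pref{pr:dislpaceable_Gr(2,5)} produces an element $g \in \U(5)$ acting holomorphically (hence symplectically, since $g$ preserves the Kostant--Kirillov form) on $\scO_{\bslambda} = \Gr(2,5)$ with $g(L_2(s_1,s_2,t)) = L_1(\lambda - s_1, \lambda - s_2, \lambda - t)$, and in particular that each $L_i(s_1,s_2,t)$ is \emph{displaceable}: there is a Hamiltonian diffeomorphism $\varphi$ with $\varphi(L_i) \cap L_i = \emptyset$. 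I would spell out why $L_i$ is displaceable: the $\U(5)$-action on the adjoint orbit is by conjugation, and a path in $\U(5)$ from the identity to $g$ (composed with a further small rotation if needed to make the images genuinely disjoint rather than merely of the form $L_t$ with $t \ne \lambda - t$) is generated by a time-dependent Hamiltonian, because $\U(5)$ is connected and acts in a Hamiltonian fashion on $\scO_{\bslambda}$ with moment map the inclusion $\scO_{\bslambda} \hookrightarrow \sqrt{-1}\,\fraku(5) \cong \fraku(5)^*$. One must check that for generic parameters $L_1(\lambda-s_1,\lambda-s_2,\lambda-t)$ is actually disjoint from $L_1(s_1,s_2,t)$, or equivalently that the two fibers sit over distinct points of $\Delta$, which holds as long as $(s_1,s_2,t) \ne (\lambda-s_1,\lambda-s_2,\lambda-t)$; in the non-generic case one perturbs by an additional small Hamiltonian, exactly as in \pref{cr:dislpaceable_Gr(2,4)}.

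The second and essential ingredient is the invariance theorem \cite[Theorem G]{Fukaya-Oh-Ohta-Ono}: if $\psi$ is a Hamiltonian diffeomorphism, then for a relatively spin Lagrangian $L$ and any weak bounding cochain $b$ there is an isomorphism
\[
 \HF((L,b),(L,b);\Lambda) \cong \HF((L,b),(\psi(L),\psi_* b);\Lambda),
\]
and when $\psi(L) \cap L = \emptyset$ the right-hand Floer complex is the zero complex, so the cohomology vanishes. Thus displaceability forces $\HF((L_i(s_1,s_2,t),b),(L_i(s_1,s_2,t),b);\Lambda) = 0$ for every weak bounding cochain $b$, which is precisely the assertion. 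I would note in passing that $L_i(s_1,s_2,t) \cong S^3 \times T^3$ is orientable and (being a homogeneous space, with $H^2$ torsion-free in the relevant range) admits a relative spin structure, so the hypotheses of \cite[Theorem G]{Fukaya-Oh-Ohta-Ono} are met; this is the kind of routine verification I would only remark on rather than belabor.

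The main obstacle — such as it is — is not conceptual but bookkeeping: one must confirm that the displacing map can be taken Hamiltonian rather than merely symplectic, and that the invariance statement as quoted applies at the level of weak bounding cochains (not just the unobstructed case), since the corollary is asserted for \emph{any} $b$. Both points are standard: $\Gr(2,5)$ is simply connected so $\mathrm{Symp}_0 = \mathrm{Ham}$ and the path in $\U(5)$ automatically integrates a Hamiltonian isotopy, while \cite[Theorem G]{Fukaya-Oh-Ohta-Ono} is stated in the filtered $A_\infty$ framework precisely to accommodate weak bounding cochains and the deformed differential $\delta_{b_0,b_1}$. Hence the proof is essentially a one-line invocation of these two facts, and I would present it as such.

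\begin{proof}
By \pref{pr:dislpaceable_Gr(2,5)} the fiber $L_i(s_1,s_2,t)$ is displaceable:
there is a Hamiltonian diffeomorphism $\varphi$ of $\Gr(2,5)$ with
$\varphi(L_i(s_1,s_2,t)) \cap L_i(s_1,s_2,t) = \emptyset$, obtained from a path in
$\U(5)$ joining the identity to the element $g$ (followed by a further small
Hamiltonian perturbation when $(s_1,s_2,t) = (\lambda - s_1, \lambda - s_2, \lambda - t)$),
using that $\U(5)$ is connected and acts on the adjoint orbit $\scO_{\bslambda}$
in a Hamiltonian fashion. By the Hamiltonian isotopy invariance of Floer
cohomology over the Novikov field \cite[Theorem G]{Fukaya-Oh-Ohta-Ono}, for any
weak bounding cochain $b$ there is an isomorphism
\[
 \HF((L_i(s_1,s_2,t),b),(L_i(s_1,s_2,t),b);\Lambda)
  \cong \HF((L_i(s_1,s_2,t),b),(\varphi(L_i(s_1,s_2,t)), \varphi_* b);\Lambda).
\]
Since $\varphi(L_i(s_1,s_2,t)) \cap L_i(s_1,s_2,t) = \emptyset$, the Floer complex
on the right-hand side is zero, and hence so is its cohomology.
\end{proof}
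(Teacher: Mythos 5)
Your proof is correct and is essentially the paper's own argument: the authors likewise deduce \pref{cr:Gr25} directly from the displaceability statement in \pref{pr:dislpaceable_Gr(2,5)} together with the Hamiltonian isotopy invariance of Floer cohomology over the Novikov field \cite[Theorem G]{Fukaya-Oh-Ohta-Ono}. The extra bookkeeping you supply (connectedness of $\U(5)$, the Hamiltonian nature of the conjugation action, and the perturbation in the non-generic case) is consistent with, and merely elaborates on, what the paper leaves implicit.
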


\begin{remark}
  Other boundary fibers have lower dimensions.
  For example, the fiber over 
  \[
  \begin{alignedat}{11}
    \gcbox{\lambda} &&&& \gcbox{t}  \\
    & \ugne && \deq && \ueq \\
    && \gcbox{t} &&&& \gcbox{t} &&&& \gcbox{0} \\
    &&& \ueq && \deq && \ueq && \dgne \\
    &&&& \gcbox{t} &&&& \gcbox{t}  \\
    &&&&& \ueq && \deq \\
    &&&&&& \gcbox{t}
  \end{alignedat}
  \]
  consists of
  \[
    \begin{pmatrix}
      \sqrt{t/\lambda} & 0 \\ 
      0 & \sqrt{t/\lambda} \\
      0 & 0 \\
      z_4 & w_4 \\
      z_5 & w_5
    \end{pmatrix}
    \mod \U(2)
  \]
  with
  \[
    \begin{pmatrix} 
      z_4 & w_4 \\ z_5 & w_5
    \end{pmatrix}
    \in \sqrt{(\lambda - t)/\lambda} \U(2),
  \]
  which means that the fiber is diffeomorphic to $\U(2)$.
\end{remark}


\section{Critical points of the potential function}
 \label{sc:critical}

Let 
$
 \Phi : F = F(n_1, \dots, n_r, n) \to \Delta
$
be the Gelfand-Cetlin system on the flag manifold, and
$\{ \theta^{(k)}_i \}_{(i,k) \in I}$ be the angle variables 
dual to the action variables $\{ \lambda_i^{(k)} \}_{(i,k) \in I}$.
For each $\bsu = (u_k^{(i)})_{(i,k) \in I} \in \Int \Delta$, 
we identify $H^1(L(\bsu); \Lambda_0)$ with $\Lambda_0^N$ by
\[
  b = \sum_{(i,k) \in I} x^{(k)}_i d \theta^{(k)}_i 
  \in H^1(L(\bsu); \Lambda_0)
  \longleftrightarrow 
  \bsx = (x^{(k)}_i)_{(i,k) \in I} \in \Lambda_0^N,
\]
and set
\begin{align*}
 y_i^{(k)} &= e^{x_i^{(k)}} T^{u_i^{(k)}}, \qquad (i, k) \in I, \\
 Q_j &= T^{\lambda_{n_j}}, \qquad j = 1, \dots, r+1.
\end{align*}

\begin{theorem}[{\cite[Theorem 10.1]{Nishinou-Nohara-Ueda_TDGCSPF}}] \label{th:potential}
For any interior point $\bsu \in \Int \Delta$, 
we have an inclusion $
H^1(L(\bsu); \Lambda_0) \subset \scMwhat(L(\bsu))$.
As a function on
\[
 \bigcup_{\bsu \in \Int \Delta} H^1(L(\bsu); \Lambda_0) \cong 
 \Int \Delta \times \Lambda_0^N,
\]
the potential function is given by
\begin{equation*} 
 \po(\bsu, \bsx)
  = \sum_{(i, k) \in I}
     \left( \frac{y_i^{(k+1)}}{y_i^{(k)}} 
          + \frac{y_i^{(k)}}{y_{i+1}^{(k+1)}} \right),
\end{equation*}
where we put $y_i^{(k+1)} = Q_j$ 
if $\lambda_i^{(k+1)} = \lambda_{n_j}$ is a constant function.
\end{theorem}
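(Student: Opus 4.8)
The plan is to reduce the computation to the toric case via the toric degeneration of the Gelfand--Cetlin system constructed in \cite{Nishinou-Nohara-Ueda_TDGCSPF}. First I would recall the flat family $\pi \colon \mathfrak{X} \to \bC$ with $\mathfrak{X}_t \cong F$ for $t \ne 0$ and central fiber the toric variety $X_0$ attached to the Gelfand--Cetlin polytope $\Delta$, together with the gradient--Hamiltonian flow, which matches the Gelfand--Cetlin torus fibration on $F$ with the toric moment fibration on $X_0$ over the smooth locus and, in particular, carries each $L(\bsu)$ with $\bsu \in \Int \Delta$ to a Lagrangian torus fiber $L_0(\bsu) \subset X_0$. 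One should keep in mind that $X_0$ is singular; however, as recalled in Section~\ref{sc:GC_system}, the faces of $\Delta$ where the Delzant condition fails have codimension greater than two, so the singular locus of $X_0$ has complex codimension at least three.

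Next I would verify the inclusion $H^1(L(\bsu); \Lambda_0) \subset \scMwhat(L(\bsu))$. Since $L(\bsu) \cong T^N$ and $b \in H^1(L(\bsu); \Lambda_0)$ has odd degree, $b \cup b = 0$, so no nonconstant classical term survives in $\sum_k \frakm_k(b^{\otimes k})$; and for a class $\beta \ne 0$, a disk of Maslov index $\mu(\beta)$ contributes to $H^{2-\mu(\beta)}(L(\bsu); \Lambda_0)$. Hence classes of Maslov index larger than $2$ contribute in negative degree and drop out, while classes of Maslov index at most $0$ are excluded because $F$ is Fano and the degeneration shows that $L(\bsu)$ bounds no holomorphic disk of non-positive Maslov index. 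Therefore $\sum_k \frakm_k(b^{\otimes k}) \in H^0(L(\bsu); \Lambda_0) = \Lambda_0\,\bfe_0$, the weak Maurer--Cartan equation holds, and $\po$ is defined on all of $H^1(L(\bsu); \Lambda_0)$.

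The core of the argument is to evaluate $\po(\bsu, \bsx)$ through the count of Maslov index $2$ disks bounded by $L(\bsu)$. On the smooth toric locus, the Cho--Oh computation identifies these disks, with one boundary marked point evaluated to a fixed point of $L_0(\bsu)$, with the facets of $\Delta$: a facet $\{\ell(\bsu) = 0\}$ with primitive inward normal $v$ contributes a single disk, of symplectic area $\ell(\bsu)$ and holonomy exponent $v$, hence the monomial $T^{\ell(\bsu)} e^{\langle v, \bsx\rangle}$. Reading the facets of $\Delta$ off the Gelfand--Cetlin pattern \eqref{GC-pattern} --- the inequalities $\lambda_i^{(k+1)} \ge \lambda_i^{(k)}$ and $\lambda_i^{(k)} \ge \lambda_{i+1}^{(k+1)}$ for $(i,k) \in I$ --- one recognizes these monomials as $y_i^{(k+1)}/y_i^{(k)}$ and $y_i^{(k)}/y_{i+1}^{(k+1)}$, with the replacement $y_i^{(k+1)} = Q_j$ whenever $\lambda_i^{(k+1)} = \lambda_{n_j}$ is a constant entry of the pattern. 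Summing over $(i,k) \in I$ yields the asserted formula, provided the disk count for $L(\bsu) \subset F$ coincides with the toric count for $L_0(\bsu) \subset X_0$.

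I expect this last coincidence to be the main obstacle, since the Cho--Oh computation is valid only in the smooth toric setting whereas $X_0$ is singular. I would establish it by a degeneration argument along $\pi$: choosing compatible almost complex structures on the $\mathfrak{X}_t$, one analyzes the Gromov limits as $t \to 0$ of Maslov index $2$ disks with boundary on $L(\bsu)$ and shows that they converge, without bubbling, to the basic toric disks on $X_0$, whose images meet the facet divisors in a single point each and avoid the deeper toric strata --- in particular the singular locus, by the complex-codimension-$\ge 3$ bound above together with a dimension count for the $N$-dimensional family of such disks --- and that the relevant moduli spaces remain regular with evaluation map of degree one throughout the family. Granting this, the toric computation on the smooth locus transfers to $F$ and the formula follows. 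A more hands-on alternative would be to classify the holomorphic disks bounded by $L(\bsu)$ directly from the Pl\"ucker equations of $F$, but the combinatorics of the Gelfand--Cetlin pattern does not seem to become any simpler that way.
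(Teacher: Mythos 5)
This theorem is not proved in the paper at all --- it is quoted verbatim from Theorem 10.1 of the cited work of Nishinou and the authors --- so there is no internal argument to compare against; your sketch reproduces the strategy of that reference: the toric degeneration of the Gelfand--Cetlin system with its gradient--Hamiltonian flow, the degree argument for the weak Maurer--Cartan inclusion, the reduction to the Cho--Oh classification of Maslov index $2$ disks on the smooth locus of the central toric fiber $X_0$, and control of the singular locus via its high codimension. The step you single out as the main obstacle --- showing that the disk counts for $L(\bsu) \subset F$ agree with the toric counts on the singular variety $X_0$ --- is indeed where the substance of the cited proof lies, and it is handled there essentially as you describe.
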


\begin{example}
We identify the 3-dimensional flag manifold $\Fl(3)$
with the adjoint orbit of
$\bslambda = \diag(\lambda_1, \lambda_2, \lambda_3)$.
The potential function is given by
\begin{align*}
 \po
  &= e^{-x_1} T^{- u_1 + \lambda_1}
     + e^{x_1} T^{u_1 - \lambda_2}
     + e^{- x_2} T^{- u_2 + \lambda_2} \\
  & \qquad
     + e^{x_2} T^{u_2 - \lambda_3}
     + e^{x_1 - x_3} T^{u_1 - u_3}
     + e^{- x_2 + x_3} T^{- u_2 + u_3} \\
  &= \frac{Q_1}{y_1} + \frac{y_1}{Q_2} + \frac{Q_2}{y_2}
     + \frac{y_2}{Q_3} + \frac{y_1}{y_3} + \frac{y_3}{y_2}.
\end{align*}
The potential function $\po$ has six critical points given by
\begin{align*}
 y_1 &= {y_3^2}/{y_2}, \\ 
 y_2 &= \pm \sqrt{Q_3( y_3 + Q_2)}, \\
 y_3 &= \sqrt[3]{Q_1 Q_2 Q_3}, \ 
        e^{2 \pi \sqrt{-1}/3} \sqrt[3]{Q_1 Q_2 Q_3}, \ 
        e^{4 \pi \sqrt{-1}/3} \sqrt[3]{Q_1 Q_2 Q_3}.
\end{align*}
It is easy to see that all critical points are non-degenerate
and have the same valuation which lies in the interior of 
the Gelfand-Cetlin polytope.
Hence we have as many critical points
as $\dim H^*(\Fl(3)) = 6$
in this case.
One can show,
using the presentation of the quantum cohomology
in \cite[Theorem 1]{Givental-Kim},
that the set of eigenvalues
of the quantum cup product by $c_1(\Fl(3))$
coincides with the set of critical values
of the potential function.

The Floer differential $\frakm_1^b$ is trivial
for each critical point $(\bsu, \bsx)$ of $\po$,
and the corresponding Floer cohomology is given by
\[
 \HF((L(\bsu), b), (L(\bsu), b); \Lambda_0) \cong
  H^*(L(\bsu); \Lambda_0) \cong H^*(T^3; \Lambda_0).
\]
\end{example}

\begin{example}
We identify $\Gr(2, 4)$ with the adjoint orbit of
$\diag (2\lambda, 2 \lambda, 0, 0)$.
Setting $Q = T^{2\lambda}$, the potential function is given by
\begin{align}
 \po
  &= e^{- x_2} T^{- u_2 + 2\lambda}
     + e^{- x_1 + x_2} T^{- u_1 + u_2}
     + e^{x_1 - x_3} T^{u_1 - u_3}
     \nonumber \\
  & \qquad
     + e^{x_3} T^{u_3}
     + e^{x_2 - x_4} T^{u_2 - u_4}
     + e^{- x_3 + x_4} T^{- u_3 + u_4}
     \nonumber \\
  &= \frac{Q}{y_2} + \frac{y_2}{y_1} + \frac{y_1}{y_3}
     + y_3 + \frac{y_2}{y_4} + \frac{y_4}{y_3}.
 \label{eq:pot_24}
\end{align}
This function has four critical points
\[
 (y_1, y_2, y_3, y_4) = 
 \left( (-1)^i \sqrt[4]{Q^2}, \sqrt{-1}^i \sqrt[4]{\frac{Q^3}{4}},
       \sqrt{-1}^i \sqrt[4]{4Q}, (-1)^i \sqrt[4]{Q^2} \right) 
\]
for $i=0, 1, 2, 3$, and the corresponding critical values are
\begin{align} \label{eq:critv_24}
 \po = 4 \sqrt{2} \sqrt{-1}^i \sqrt[4]{Q}.
\end{align}
Since $\dim H^*(\Gr(2, 4)) = 6$,
one has less critical point than $\dim H^*(\Gr(2, 4))$.
These critical points are non-degenerate and 
have a common valuation
\[
  \bsu_0 = (\lambda, 3\lambda /2, \lambda/2, \lambda)
  \in \Int \Delta.
\]
Hence there exist four weak bounding cochains $b_0, \dots, b_3$
such that
\[
 \HF((L(\bsu_0), b_i), (L(\bsu_0), b_i); \Lambda_0) 
  \cong H^*(L(\bsu_0); \Lambda_0)
  \cong H^*(T^4; \Lambda_0)
\]
for $i=0,1,2,3$.
One can show,
using the presentation of the quantum cohomology
in \cite[Theorem 0.1]{MR1621570},
that the set eigenvalues of the quantum cup product by $c_1(\Gr(2,4))$
consists of the four critical values of the potential function
and the zero eigenvalue with multiplicity two.
\end{example}

\begin{example}
We identify $\Gr(2,5)$ with the adjoint orbit of
$\diag (\lambda, \lambda, 0,0,0)$.
Since the Gelfand-Cetlin polytope is defined by \eqref{eq:GCpattern_Gr(2,5)},
the potential function is given by
\begin{align} \label{eq:pot_25}
  \po = \frac{Q}{y_2} + \frac{y_2}{y_1} + \frac{y_1}{y_3}
      + \frac{y_2}{y_4} + \frac{y_4}{y_3} + \frac{y_3}{y_5}
      + y_5 + \frac{y_4}{y_6} + \frac{y_6}{y_5}.
\end{align}
This function has ten critical points
defined by
\[
 y_6^5 = Q^5, \quad
 Q y_4 = y_6(y_6^3-y_4^2),
\]
and
\[
 y_1 = \frac{Q}{y_6}, \quad
 y_2 = \frac{Q}{y_5}, \quad
 y_3 = \frac{Q}{y_4}, \quad
 y_5 = \frac{y_6^2}{y_4}.
\]
The set 
\begin{align} \label{eq:critv_25}
 \lc 5 (\zeta_5^i + \zeta_5^j) Q^{1/5} \relmid
 \zeta_5 = \exp(2 \pi \sqrt{-1}/5) \text{ and }
 0 \le i < j \le 4 \rc
\end{align}
of critical values of the potential function coincides
with the set of eigenvalues of the quantum cup product
by $c_1(\Gr(2,5))$.
\end{example}

\section{Floer cohomologies of non-torus fibers} \label{sc:HF_nontorus}

We briefly recall the construction of the $A_{\infty}$ structure 
$\{ \frakm_k \}_{k \ge 0}$, 
omitting various technical details.
Let $L$ be a spin, oriented, and compact Lagrangian submanifold 
 in a symplectic manifold $(X, \omega)$.
For an almost complex structure $J$ compatible with $\omega$, 
let $\scM_{k+1}(J, \beta)$ be the moduli space
of stable $J$-holomorphic maps
$v : (\Sigma, \partial \Sigma) \to (X, L)$ 
from a bordered Riemann surface $\Sigma$ 
in the class $\beta \in \pi_2(X, L)$ of genus zero 
with $(k+1)$ boundary marked points 
$z_0, z_1, \dots, z_k \in \partial \Sigma$.
Then 
$\frakm_k = \sum_{\beta \in \pi_2(X, L)} T^{\beta \cap \omega} \frakm_{k, \beta} 
\colon H^*(L; \Lambda_0)^{\otimes k} \to H^*(L; \Lambda_0)$ 
is defined by
\[
  \frakm_{k, \beta} (x_1, \dots, x_k) 
  = (\ev_0)_* (\ev_1^* x_1 \cup \dots \cup \ev_k^* x_k),
\]
where $\ev_i \colon \scM_{k+1}(J, \beta) \to L$,
$[v, (z_0, \dots, z_k)] \mapsto v(z_i)$ 
is the evaluation map at the $i$th marked point.

%

\subsection{Holomorphic disks in $(\Fl(3), L_0)$}

We identify $\Fl(3)$
with the adjoint orbit of $\diag (\lambda_1, 0, -\lambda_2)$
for $\lambda_1, \lambda_2 >  0$ 
as in Subsection \ref{sc:fiber_Fl(3)}.
Note that the symplectic form and the first Chern class are given by 
$
 \omega = \lambda_1 \omega_{\bP_1} + \lambda_2 \omega_{\bP_2}
$
and
$c_1(\Fl(3)) = 2(\omega_{\bP_1} + \omega_{\bP_2})$,
respectively.

Recall that the homotopy group $\pi_2(\Fl(3)) \cong \bZ^2$ is generated by
1-dimensional Schubert varieties $X_1$ and $X_2$, 
which are rational curves of bidegree $(1,0)$ and $(0,1)$
in $\bP_1 \times \bP_2 \cong \bP^2 \times \bP^2$, respectively.
Since $L_0$ is diffeomorphic to $\SU(2) \cong S^3$, 
we have $\pi_1(L_0) = \pi_2(L_0) =0$.
The long exact sequence of homotopy groups yields
\[
  \pi_2(\Fl(3), L_0) \cong \pi_2(\Fl(3)) \cong \bZ^2.
\]
Let $\beta_1$, $\beta_2$ be generators of $\pi_2(\Fl(3), L_0)$
corresponding to $X_1$ and $X_2$, respectively.
The symplectic area of $\beta_i$ is given by
\[
  \beta_i \cap \omega 
  = [X_i] \cap (\lambda_1 \omega_{\bP_1} + \lambda_2 \omega_{\bP_2})
  = \lambda_i.
\]

Let $\tau$ be the anti-holomorphic involution on $\Fl(3)$ 
defined in \eqref{eq:involution_Fl(3)}.
For a holomorphic disk $v : (D^2, \partial D^2) \to (\Fl(3), L_0)$, 
we define a new holomorphic disk
$\tau_* v: (D^2, \partial D^2) \to (\Fl(3), L_0)$ by
\[
  \tau_* v(z) = \tau (v (\overline{z})).
\]
Since $L_0$ is the fixed point set of $\tau$, one can glue $v$ and $\tau_* v$
along the boundary to obtain a holomorphic curve
$w = v \# \tau_* v : \bP^1 \to \Fl(3)$.
The induced involution on $\pi_2(\Fl(3), L_0)$, which is also denoted by $\tau_*$, 
is given by $\tau_*\beta_1 = \beta_2$.
If $v$ represents $\beta_1$ or $\beta_2$, 
then $[w] = \beta_1 + \beta_2 = [X_1] + [X_2]$,
i.e., $w$ is a rational curve of bidegree $(1,1)$.

Let $\mu_{L_0} : \pi_2(\Fl(3), L_0) \to \bZ$ be the Maslov index.
If we assume $\lambda_1 = \lambda_2$ so that 
$\tau$ is an anti-symplectic involution,
then we have
\[
  \mu_{L_0}(\beta_i) 
  = \frac 12 (\mu_{L_0}(\beta_i) + \mu_{L_0}(\tau_* \beta_i))
  = ( [X_1]+ [X_2] ) \cap c_1(\Fl(3))  = 4
\]
for $i=1, 2$.
Since the symplectic form $\omega$
and the Lagrangian submanifold $L_0$ depend
continuously on $\lambda_1, \lambda_2  >0$, 
the Maslov index
$\mu_{L_0}(\beta_1) = \mu_{L_0}(\beta_2) = 4$ is independent of
$\lambda_1, \lambda_2$.

To describe holomorphic disks with Lagrangian boundary condition,
we identify the unit disk $D^2$ with the upper half plane 
$\bH = \bH_+$.

\begin{proposition} \label{pr:disk_Fl}
 Let $w \colon \bP^1 \to \Fl(3)$ be a holomorphic curve of bidegree $(1,1)$ 
 such that $w(\bR \cup \{\infty \}) \subset L_0$.
 After the $\SU(2)$-action, we may assume
 \begin{equation}
   w(\infty)= ([1:0:\sqrt{\lambda_1/\lambda_2}], 
    [1:0:- \sqrt{\lambda_2/\lambda_1}]).
   \label{eq:bdry_cond_Fl(3)}
 \end{equation}
 We can write
 \begin{equation}
   w(0) = \Bigl( \bigl[a_1:a_2: \sqrt{\lambda_1/\lambda_2} \bigr], 
     \bigl[\overline{a}_1: \overline{a}_2: 
         - \sqrt{\lambda_2/\lambda_1} \bigr]
     \Bigr)
   \in L_0
   \label{eq:bdry_cond_Fl(3)_2}
 \end{equation}
 for some $(a_1, a_2) \in S^3 \setminus \{ (1,0) \}$.
 Then $w$ is given by
 \[
   w(z) = \Bigl( 
     \bigl[cz+a_1: a_2: \sqrt{\lambda_1/\lambda_2} (cz+1) \bigr], 
     \bigl[\overline{c}z+\overline{a}_1: \overline{a}_2: 
   - \sqrt{\lambda_2/\lambda_1} (\overline{c}z+1) \bigr] \Bigr)
 \]
 with $c/ \overline{c} = - (a_1-1)/(\overline{a}_1-1)$.
\end{proposition}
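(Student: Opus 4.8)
The plan is to classify the holomorphic curves $w\colon\bP^1\to\Fl(3)$ of bidegree $(1,1)$ by writing them out in the Pl\"ucker coordinates. A map of bidegree $(1,1)$ into $\bP_1\times\bP_2\cong\bP^2\times\bP^2$ has the form $z\mapsto([p_1(z):p_2(z):p_3(z)],[q_1(z):q_2(z):q_3(z)])$ where each $p_i$ and $q_j$ is a polynomial of degree at most $1$ in the affine coordinate $z$ on $\bP^1\setminus\{\infty\}$; since $w$ lands in $\Fl(3)$ we must also impose the Pl\"ucker relation $p_1 q_1 + p_2 q_2 + p_3 q_3 = 0$ (in the coordinates of the first example, after matching indices). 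First I would use the normalization \eqref{eq:bdry_cond_Fl(3)}: the condition $w(\infty)=([1:0:\sqrt{\lambda_1/\lambda_2}],[1:0:-\sqrt{\lambda_2/\lambda_1}])$ pins down the leading coefficients of the $p_i$ and $q_j$ up to common scaling, forcing (after rescaling) $p_1,p_3$ to have equal leading coefficient $c$ up to the factor $\sqrt{\lambda_1/\lambda_2}$, $p_2$ to have leading coefficient $0$, and similarly for the $q_j$ with $\overline c$ governed by the reality of the boundary.

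Next I would impose the boundary condition $w(\bR\cup\{\infty\})\subset L_0$, using the explicit description \eqref{eq:lagS^3} of $\iota(L_0)$: a point of $\iota(L_0)$ has first factor $[a_1:a_2:\sqrt{\lambda_1/\lambda_2}]$ and second factor $[\overline a_1:\overline a_2:-\sqrt{\lambda_2/\lambda_1}]$ with $|a_1|^2+|a_2|^2=1$. Comparing with the image of a real $z$, the two factors of $w(z)$ must be related by this complex-conjugation-of-the-first-two-coordinates pattern together with the fixed ratio in the third coordinate; this is exactly the statement that $L_0=\Fix(\tau)$ for the involution $\tau$ of \eqref{eq:involution_Fl(3)}, so the boundary condition becomes $\tau(w(z))=w(z)$ for $z\in\bR$, i.e. $w(\overline z)=\tau_*^{-1}$ applied appropriately — concretely, that the degree-one polynomials defining the second factor are obtained from those of the first by conjugating coefficients and inserting the constant ratio $-\sqrt{\lambda_2/\lambda_1}$ in place of $\sqrt{\lambda_1/\lambda_2}$. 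This forces the second-factor polynomials to be $\overline c z+\overline a_1$, $\overline a_2$, $-\sqrt{\lambda_2/\lambda_1}(\overline c z+\overline\beta)$ for some constant $\beta$, and matching the value at $\infty$ (which is already normalized) and at $z=0$ (which is declared to be $w(0)\in L_0$ as in \eqref{eq:bdry_cond_Fl(3)_2}) fixes $\beta=1$ and identifies the $z=0$ values with $(a_1,a_2)$.

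Finally, I would feed the resulting ansatz $w(z)=([cz+a_1:a_2:\sqrt{\lambda_1/\lambda_2}(cz+1)],[\overline cz+\overline a_1:\overline a_2:-\sqrt{\lambda_2/\lambda_1}(\overline cz+1)])$ into the Pl\"ucker relation. This is the step I expect to be the real computation: expanding $p_1q_1+p_2q_2+p_3q_3$ in powers of $z$ and setting each coefficient to zero. The $z^2$ and $z^0$ coefficients will be automatically satisfied (they encode the already-imposed conditions at $\infty$ and at $0$, both of which lie on $L_0\subset\Fl(3)$), and the vanishing of the $z^1$ coefficient yields a single scalar equation; this equation simplifies to $c\overline a_1 + \overline c a_1 - (c+\overline c) = $ (a real multiple of) zero, equivalently the relation $c/\overline c = -(a_1-1)/(\overline a_1-1)$ asserted in the statement. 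The main obstacle is purely bookkeeping: keeping the two $\bP^2$-factors and the $\sqrt{\lambda_1/\lambda_2}$ factors straight through the Pl\"ucker relation, and checking that the reality constraint on the boundary does not over-determine the system — i.e. that exactly one free real parameter (the phase of $c$, with $|c|$ absorbed by the automorphism of $\bH$) survives, matching the expected dimension of the moduli space of such disks. Once the $z^1$-coefficient equation is shown to be equivalent to $c/\overline c=-(a_1-1)/(\overline a_1-1)$ and no further constraint appears, the classification in \pref{pr:disk_Fl} follows.
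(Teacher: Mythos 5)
Your proposal is correct and follows essentially the same route as the paper: both pin down the linear Pl\"ucker-coordinate polynomials using the normalizations at $\infty$ and $0$, extract $c_2=\overline{c_1}$ from the requirement that $w(\bR)\subset L_0=\Fix(\tau)$, and obtain the constraint $c/\overline{c}=-(a_1-1)/(\overline{a}_1-1)$ from the vanishing of the $z$-coefficient of the Pl\"ucker relation, the $z^2$- and $z^0$-coefficients vanishing automatically. The only difference is the order in which the reality condition and the Pl\"ucker relation are imposed, which is immaterial.
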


\begin{remark}
  After the action of 
  \[
    \{ g \in \PSL(2, \bR) \,|\, g(0)=0, \, g(\infty) = \infty \}
    \cong \bR_{>0}
  \]
  on $\bH$, we may assume that $|c|=1$.
\end{remark}

\begin{proof}
The assumptions \eqref{eq:bdry_cond_Fl(3)} and \eqref{eq:bdry_cond_Fl(3)_2} 
implies that $w$ has the form
 \[
   w(z) = \Bigl( 
     \bigl[c_1z+a_1: a_2: \sqrt{\lambda_1/\lambda_2} (c_1z+1) \bigr], 
     \bigl[c_2 z+ \overline{a}_1: \overline{a}_2: 
   - \sqrt{\lambda_2/\lambda_1} (c_2 z+1) \bigr] \Bigr)
 \]
for some $c_1, c_2 \in \bC^*$.
The Pl\"ucker relation 
\begin{align*}
  0 &= - (c_1 z + a_1)(c_2 z + \overline{a}_1) - |a_2|^2
       + (c_1 z + 1)(c_2 z + 1) \\
    &= (c_1 - a_1 c_1 + c_2 - \overline{a}_1 c_2) z
\end{align*}
implies $c_1(\overline{a}_1 -1) + c_2 (a-1) =0$.
On the other hand, the Lagrangian boundary condition 
$w(\bR) \subset L_0$ implies that
\[
  \frac{c_1x+a_1}{c_1x+1} = \frac{\overline{c_2}x+a_1}{\overline{c_2}x+1},
  \quad
  \frac{a_2}{c_1x+1} = \frac{a_2}{\overline{c_2}x+1},
  \quad x \in \bR,
\]
which means $c_2 = \overline{c_1}$.
\end{proof}

Note that  $\arg c$ is determined by $a_1$ up to sign, 
and the sign corresponds to whether 
$v = w|_{\bH}$ represents $\beta_1$ or $\beta_2$.
Namely any holomorphic disk in the class $\beta_i$ satisfying 
\eqref{eq:bdry_cond_Fl(3)} and \eqref{eq:bdry_cond_Fl(3)_2} 
is uniquely determined by $(a_1, a_2)$ for $i=1, 2$.

\begin{example}
  Suppose that $(a_1, a_2) = (-1, 0)$.
  Then $c = \pm \ii$, and the corresponding holomorphic disks 
  are given by
  \[
     v_{\pm}(z) = \Bigl( 
       \bigl[ z \pm \ii : 0 : 
         \sqrt{\frac{\lambda_1}{\lambda_2}} (z \mp \ii) \bigr], 
       \bigl[z \mp \ii : 0 : 
       - \sqrt{\frac{\lambda_2}{\lambda_1}} (z \pm \ii ) \bigr] \Bigr).
  \]
  It is easy to see that the image $v_+(\bH)$ (resp. $v_-(\bH)$) 
  is the inverse image of the edge of $\Delta$ given by
  $u^{(1)}_1 = u^{(2)}_1$ and $u^{(2)}_2 = 0$
  (resp. $u^{(1)}_1 = u^{(2)}_2$ and $u^{(2)}_1 = 0$),
  which is the upper (resp. lower) vertical edge emanating from the vertex
  $\bszero = (0,0,0)$.
  The generators $\beta_1, \beta_2$ of $\pi_2(\Fl(3), L_0)$ 
  are represented by $v_+$ and $v_-$ respectively.
\end{example}


\subsection{Floer cohomology of the $\SU(2)$-fiber in $\Fl(3)$}

Let $J$ be the standard complex structure on $\Fl(3)$.
Since the fiber $L_0$ is $\SU(2)$-homogeneous, 
\cite[Proposition 3.2.1]{1401.4073} implies the following.

\begin{proposition}
Any $J$-holomorphic disk in $(\Fl(3), L_0)$ is Fredholm regular.
Hence the moduli space $\scM_{k+1}^{\reg}(J, \beta)$ 
of $J$-holomorphic disks in the class $\beta$ 
with $k+1$ boundary marked points
is a smooth manifold of dimension
\begin{align*}
  \dim \scM_{k+1}^{\reg}(J, \beta) 
  &= \dim L_0 + \mu_{L_0}(\beta) + k+1 -3 \\
  &= \mu_{L_0}(\beta) + k+1.
\end{align*}
\end{proposition}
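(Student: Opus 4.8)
The plan is to obtain Fredholm regularity for free from the $\SU(2)$-homogeneity of $L_0$ recorded in \pref{pr:SU(2)-fiber}, and then to read off the dimension of the moduli space from the Riemann--Roch index theorem. In other words, there is essentially nothing to prove beyond checking that a cited result applies and then running a standard index count.

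Concretely, I would first verify the hypotheses of \cite[Proposition 3.2.1]{1401.4073}: $\Fl(3)$ is a compact K\"ahler manifold, $J$ is its standard integrable complex structure, the group $K \cong \SU(2)$ embeds in $\SL(3,\bC) \subset \GL(3,\bC)$ and therefore acts holomorphically on $\Fl(3)$, and $L_0 \cong S^3$ is a compact (spin, orientable) $K$-orbit, hence a $K$-homogeneous Lagrangian in the sense of Evans and Lekili. Their proposition then yields that the linearized Cauchy--Riemann operator $D_v\bar\partial$ is surjective for \emph{every} $J$-holomorphic map $v \colon (D^2,\partial D^2) \to (\Fl(3), L_0)$, including multiply covered ones --- which is precisely the additional strength one extracts from homogeneity. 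The underlying mechanism is that the complexified infinitesimal action of $\frakk \otimes_{\bR} \bC \cong \mathfrak{sl}(2,\bC)$ supplies enough holomorphic sections of $v^*T\Fl(3)$ to span the cokernel of $D_v\bar\partial$; since this is carried out in full generality in \cite{1401.4073}, I would simply invoke it, after confirming that the cited statement is indeed formulated for holomorphic disks with boundary on a $K$-homogeneous Lagrangian.

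Granting regularity, $\scM_{k+1}^{\reg}(J,\beta)$ is cut out transversally and is thus a smooth manifold of the expected dimension. By Riemann--Roch for the totally real bundle pair $\bigl(v^*T\Fl(3),\,(v|_{\partial D^2})^*TL_0\bigr)$ over $(D^2,\partial D^2)$, the index of $D_v\bar\partial$ equals $\dim L_0 + \mu_{L_0}(\beta)$; adding $k+1$ for the boundary marked points and subtracting $3 = \dim \PSL(2,\bR)$ for the reparametrization group gives
\[
  \dim \scM_{k+1}^{\reg}(J,\beta) = \dim L_0 + \mu_{L_0}(\beta) + (k+1) - 3 = \mu_{L_0}(\beta) + (k+1),
\]
the final equality using $\dim L_0 = 3$. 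The only points requiring care are routine: that $\PSL(2,\bR)$ acts freely on the relevant configuration space (automatic for $k+1 \ge 3$, and harmless otherwise since we only use the top stratum), that there is no interior sphere bubbling to contend with at this dimension (again a consequence of regularity together with the genus-zero hypothesis), and that the spin structure on $L_0 \cong S^3$ renders the moduli spaces orientable. None of these affects the dimension count, so the substantive content of the proof is the single appeal to \cite[Proposition 3.2.1]{1401.4073}, and I do not expect any genuine obstacle beyond verifying its applicability.
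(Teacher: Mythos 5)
Your proof is correct and follows exactly the paper's route: the paper likewise derives Fredholm regularity directly from the $\SU(2)$-homogeneity of $L_0$ via \cite[Proposition 3.2.1]{1401.4073} and then states the standard dimension count $\dim L_0 + \mu_{L_0}(\beta) + k + 1 - 3$. The extra hypothesis-checking and remarks on reparametrization you include are sound but do not change the argument.
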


In particular, we have $\dim \scM_2(J, \beta_i) = 6$ for $i=1,2$.
Proposition \ref{pr:disk_Fl} implies the following:

\begin{corollary} \label{cr:ev_Fl}
  Let $U = \SU(2) \setminus \{ 1 \} \cong \{ (a_1, a_2) \in S^3 \, | \, a_1 \ne 1 \}$.
  Then $\scM_2(J, \beta_i)$ has an open dense subset
  diffeomorphic to $\SU(2) \times U$ on which the evaluation map
  is given by
  \[
    \SU(2) \times U \longrightarrow 
    L_0 \times L_0 \cong \SU(2) \times \SU(2),
    \quad
    (g_1, g_2) \longmapsto (g_1, g_1 g_2).
  \]
  In particular, $\ev : \scM_2(J, \beta_i) \to L_0 \times L_0$
  is generically one-to-one.
\end{corollary}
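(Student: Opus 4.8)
The plan is to read off everything directly from the explicit parametrization of holomorphic disks in \pref{pr:disk_Fl}. That proposition gives, for each boundary value $w(\infty) \in L_0$ and each $w(0) \in L_0$ (encoded by $(a_1,a_2) \in S^3 \setminus \{(1,0)\}$), a holomorphic curve $w$ of bidegree $(1,1)$, and the sign ambiguity in $\arg c$ distinguishes whether the restriction $v = w|_{\bH}$ lies in class $\beta_1$ or $\beta_2$. So first I would fix $i \in \{1,2\}$ and observe that the data of a disk in $\scM_2(J,\beta_i)$ — that is, a disk with two boundary marked points $z_0, z_1$ — after using the remaining $\bR_{>0}$-freedom noted in the remark (which fixes $|c|=1$) and placing $z_0 = \infty$, $z_1 = 0$ via the $\PSL(2,\bR)$-action, is exactly: a choice of $w(\infty) \in L_0 \cong \SU(2)$ and a choice of $(a_1,a_2)$ in $U = \{a_1 \ne 1\} \cong \SU(2)\setminus\{1\}$. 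This identifies an open dense subset of $\scM_2(J,\beta_i)$ with $\SU(2) \times U$; density and openness follow because we have only excluded the measure-zero loci where marked points collide or where $a_1 = 1$ (and because $\SU(2)$ acts transitively on $L_0$, the $\SU(2)$-factor parametrizing $w(\infty)$ really does sweep out all of $L_0$).

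Next I would compute the evaluation map $\ev = (\ev_0, \ev_1)$ under this identification. Here $\ev_0 = w(\infty)$, which after the $\SU(2)$-action brings $w(\infty)$ to the base point \eqref{eq:bdry_cond_Fl(3)}; writing $g_1 \in \SU(2)$ for the group element carrying the base point to $w(\infty)$, we have $\ev_0(g_1, g_2) = g_1$ under $L_0 \cong \SU(2)$. For $\ev_1 = w(0)$: in the normalized picture \eqref{eq:bdry_cond_Fl(3)_2}, $w(0)$ is the point of $L_0$ labelled by $(a_1, a_2)$, i.e. $w(0) = g_2 \cdot (\text{base point})$ for the group element $g_2 \in \SU(2)\setminus\{1\}$ corresponding to $(a_1,a_2) \in U$ (this is precisely the identification of $L_0$ with the $K$-orbit in \pref{pr:SU(2)-fiber}). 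Undoing the initial $\SU(2)$-action that moved $w(\infty)$ back to the base point then multiplies $w(0)$ on the left by $g_1$, giving $\ev_1(g_1,g_2) = g_1 g_2$. Hence $\ev(g_1,g_2) = (g_1, g_1 g_2)$, which is the asserted formula.

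Finally, the map $(g_1, g_2) \mapsto (g_1, g_1 g_2)$ from $\SU(2) \times U$ to $\SU(2) \times \SU(2)$ is a diffeomorphism onto the open dense subset $\{(h_1, h_2) \mid h_1^{-1} h_2 \ne 1\} = \{h_1 \ne h_2\}$ of $\SU(2) \times \SU(2) \cong L_0 \times L_0$, with inverse $(h_1, h_2) \mapsto (h_1, h_1^{-1} h_2)$. Since this is a bijection from a dense open subset of $\scM_2(J,\beta_i)$ onto a dense open subset of $L_0 \times L_0$, the evaluation map $\ev \colon \scM_2(J,\beta_i) \to L_0 \times L_0$ is generically one-to-one. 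I expect the only genuine subtlety — the "main obstacle" — to be bookkeeping: carefully tracking which residual automorphisms of $(\bH, z_0, z_1)$ have been used to normalize the disk (the $\PSL(2,\bR)$-stabilizer of $\{0,\infty\}$ is $\bR_{>0}$, already spent to set $|c|=1$, so no further freedom remains and the count of parameters matches $\dim \scM_2(J,\beta_i) = 6 = \dim \SU(2) + \dim U$), and checking that the loci we discard (colliding marked points, $a_1 = 1$, the image of $w(\infty)$ hitting the deleted point) are genuinely of positive codimension so that "open dense" is justified. The holomorphicity and regularity are already in hand from the preceding proposition, so no analysis is needed here — it is purely the explicit formula from \pref{pr:disk_Fl} plus elementary group theory.
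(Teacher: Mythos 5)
Your proposal is correct and follows essentially the same route as the paper, which likewise deduces the corollary by reading the parametrization $(w(\infty),(a_1,a_2))\in \SU(2)\times U$ directly off the explicit formula for bidegree $(1,1)$ curves in Proposition \ref{pr:disk_Fl}, with the residual $\bR_{>0}$-automorphism spent on normalizing $|c|=1$ and the sign of $\arg c$ selecting the class $\beta_i$. Your added bookkeeping (the dimension count $6=3+3$ and the observation that $(g_1,g_2)\mapsto(g_1,g_1g_2)$ is a diffeomorphism onto $\{h_1\ne h_2\}$) is exactly the content the paper leaves implicit.
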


Since the minimal Maslov number is 
$\mu_{L_0}(\beta_1) = \mu_{L_0}(\beta_2) = 4$ and
\[
  \deg \frakm_{1, \beta}(x) = \deg x + 1 -\mu_{L_0}(\beta),
  \quad x \in H^*(L_0; \Lambda_0),
\]
the only nontrivial parts of the Floer differential are 
\[
  \frakm_{1, \beta_i}: 
  H^3 (L_0) \cong H_0 (L_0) \longrightarrow
  H^0 (L_0) \cong H_3 (L_0)
\]
for $i=1,2$.
Corollary \ref{cr:ev_Fl} implies that
for the class $[p] \in H_0(L_0)$ of a point, we have
\[
  \frakm_{1, \beta_i}([p]) = 
  {\ev_0}_* [\scM_2(J, \beta_i) {}_{\ev_1}\!\times \{p\} ]
  = \pm [L_0].
\]
To see the sign, we use a result on the orientation of the moduli spaces 
of pseudo-holomorphic disks by 
Fukaya, Oh, Ohta, and Ono \cite[Theorem 1.5]{0912.2646}.
The following statement is a slightly weaker version of the result, 
which is sufficient for our purpose.

\begin{theorem} \label{th:FOOO_orientation}
Let $(X, \omega)$ be a compact symplectic manifold,
and $\tau$ an anti-symplectic involution on $X$ whose fixed point set 
$L= \Fix(\tau)$ is non-empty, compact, connected, and spin.
Then $\frakm_{k,\beta}$ and $\frakm_{k, \tau_* \beta}$ satisfy
\[
  \frakm_{k,\beta} (P_1, \dots, P_k) 
  = (-1)^{\epsilon} \frakm_{k, \tau_* \beta}(P_k, \dots, P_1),
\]
where 
\[
  \epsilon = \frac{\mu_L(\beta)}{2} + k+1 + 
             \sum_{1 \le i< j \le k} (\deg P_i-1)(\deg P_j-1).
\]
\end{theorem}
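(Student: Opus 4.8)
The plan is to reduce everything to the behavior of orientations of moduli spaces of pseudo-holomorphic disks under the complex-conjugation involution on the source, combined with the reflection $\tau$ on the target. First I would recall the orientation convention of \cite{Fukaya-Oh-Ohta-Ono} on $\scM_{k+1}(J,\beta)$: it is induced by a relative spin structure on $L$ together with the canonical orientation of the index bundle of the linearized $\bar\partial$-operator, after fixing the ordering of the boundary marked points. The key observation is that the map $v \mapsto \tau_*v$, where $(\tau_*v)(z) = \tau(v(\bar z))$, defines a diffeomorphism $\scM_{k+1}(J,\beta) \to \scM_{k+1}(J,\tau_*\beta)$; on the level of marked points it reverses the cyclic order, sending $(z_0,z_1,\dots,z_k)$ on $\partial D^2 = \partial\bH$ to $(\bar z_0,\bar z_k,\dots,\bar z_1)$, which is why the inputs $P_1,\dots,P_k$ get reversed to $P_k,\dots,P_1$.

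The next step is to compute how this diffeomorphism interacts with the orientations. There are three sources of signs. (i) Complex conjugation on the disk acts on the index of the $\bar\partial$-operator; since $L = \Fix(\tau)$, this index is the complexification-type object whose determinant line picks up a sign governed by the Maslov index, contributing $\mu_L(\beta)/2$ (this is exactly the content behind \cite[Theorem 1.5]{0912.2646}, whose spin hypothesis on $L$ is what makes the relative spin structure $\tau$-invariant). (ii) Reversing the order of the $k$ boundary marked points on the circle is an odd permutation-type operation whose sign, when combined with the degree shifts built into the $A_\infty$ sign conventions, produces the quadratic form $\sum_{1\le i<j\le k}(\deg P_i - 1)(\deg P_j - 1)$ together with the linear term; I would extract this purely from the Koszul-sign bookkeeping of $\frakm_k$ as in \cite[Chapter 8]{Fukaya-Oh-Ohta-Ono}, independently of $\tau$. (iii) A constant shift $k+1$ coming from the normalization of the index formula $\dim\scM_{k+1}(J,\beta) = \dim L + \mu_L(\beta) + k + 1 - 3$ and the placement of $\ev_0$. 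Assembling (i)--(iii) gives the exponent $\epsilon = \mu_L(\beta)/2 + k + 1 + \sum_{i<j}(\deg P_i-1)(\deg P_j-1)$.

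Finally I would assemble the identity on the chain level: pushing forward $\ev_1^*P_1 \cup \dots \cup \ev_k^*P_k$ under $\ev_0$ on $\scM_{k+1}(J,\beta)$ equals, via the diffeomorphism of (ii), $(-1)^\epsilon$ times the corresponding pushforward on $\scM_{k+1}(J,\tau_*\beta)$ with the inputs in reversed order, which is precisely $(-1)^\epsilon\,\frakm_{k,\tau_*\beta}(P_k,\dots,P_1)$. One must check that the diffeomorphism $v\mapsto\tau_*v$ is compatible with the Kuranishi/perturbation data — concretely, one works with a $\tau$-invariant choice — so that the identification descends to virtual fundamental chains; since in the applications of this paper (the $K$-homogeneous fibers $L_0$, $L_t$) all disks are Fredholm regular by \cite[Proposition 3.2.1]{1401.4073}, no abstract perturbation is needed and this compatibility is automatic. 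The main obstacle is carefully tracking the interaction between the three sign contributions — in particular making sure the linear-in-$k$ terms from the index normalization and from the marked-point reversal are not double-counted — but this is exactly the computation carried out in \cite[Theorem 1.5]{0912.2646}, so I would cite it for the precise constant and only verify the reduction to our setting.
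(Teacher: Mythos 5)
The paper offers no proof of this statement at all: it is quoted verbatim (in a ``slightly weaker'' form, as the authors themselves say) from \cite[Theorem 1.5]{0912.2646}, so your ultimate reliance on that citation for the precise sign is exactly the paper's approach. Your three-part outline of where the exponent $\epsilon$ comes from is a plausible account of the cited proof, but it is supplementary --- the paper never carries out that bookkeeping itself.
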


\begin{corollary} \label{cr:orientation_Fl(3)}
We have $\frakm_{1,\beta_1} = \frakm_{1,\beta_2}$ 
for general $\lambda_1, \lambda_2 >0$.
\end{corollary}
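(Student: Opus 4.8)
The plan is to derive Corollary~\ref{cr:orientation_Fl(3)} by specializing Theorem~\ref{th:FOOO_orientation} to the anti-symplectic involution $\tau$ on $\Fl(3)$, applied to the single non-trivial piece of the Floer differential, namely $\frakm_{1,\beta_i}$ evaluated on the point class $[p] \in H_0(L_0)$ of degree $3$ (recall that the only non-trivial parts of $\frakm_1$ are $\frakm_{1,\beta_i}\colon H^3(L_0) \to H^0(L_0)$). First I would fix $\lambda_1 = \lambda_2$ so that $\tau$ is genuinely anti-symplectic, as noted after \eqref{eq:involution_Fl(3)}, and record from Proposition~\ref{pr:SU(2)-fiber} that $L_0 = \Fix(\tau)$ is non-empty, compact, connected, and diffeomorphic to $S^3$, hence spin. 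Then I would recall from the discussion preceding Proposition~\ref{pr:disk_Fl} that $\tau_*$ interchanges the two generators, $\tau_*\beta_1 = \beta_2$.

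Next I would invoke Theorem~\ref{th:FOOO_orientation} with $k=1$, $\beta = \beta_1$, and $P_1 = [p]$ the point class. The sign exponent becomes $\epsilon = \mu_{L_0}(\beta_1)/2 + k + 1 + \sum_{1 \le i < j \le 1}(\cdots) = 4/2 + 1 + 1 + 0 = 4$, where I use $\mu_{L_0}(\beta_1) = 4$ established just before Proposition~\ref{pr:disk_Fl}, and the double sum is empty since $k=1$. Since $\epsilon = 4$ is even, the theorem gives
\begin{align*}
  \frakm_{1,\beta_1}([p]) = (-1)^{4}\,\frakm_{1,\tau_*\beta_1}([p]) = \frakm_{1,\beta_2}([p]).
\end{align*}
As $[p]$ generates $H_3(L_0) \cong H^0(L_0)$ and $\frakm_{1,\beta_i}$ vanishes on all other degrees by the degree count $\deg\frakm_{1,\beta}(x) = \deg x + 1 - \mu_{L_0}(\beta)$, this equality on the point class is equality of the operators, proving the claim for $\lambda_1 = \lambda_2$.

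Finally I would promote the statement from the symmetric locus $\lambda_1 = \lambda_2$ to general $\lambda_1, \lambda_2 > 0$. Here I would use that $\frakm_{1,\beta_i}([p]) = \pm[L_0]$ was already established via Corollary~\ref{cr:ev_Fl} (the evaluation map is generically one-to-one, so the count is $\pm 1$), so the only thing varying is a sign; since the moduli spaces $\scM_2(J,\beta_i)$, their orientations, and the evaluation maps all depend continuously on $(\lambda_1, \lambda_2)$ — the Lagrangian $L_0$ and the complex structure deform continuously, and all disks are Fredholm regular by \cite[Proposition 3.2.1]{1401.4073} — the sign is locally constant on the connected parameter space $\{(\lambda_1,\lambda_2) : \lambda_i > 0\}$, hence determined by its value on the diagonal. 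I expect the main obstacle to be the care needed in this last continuity step: one must check that the spin structure and orientation conventions on the $\beta_i$-moduli spaces are carried along continuously in $(\lambda_1, \lambda_2)$, so that "$\frakm_{1,\beta_1} = \frakm_{1,\beta_2}$" — an identity of signed operators — really does propagate off the diagonal rather than merely "$\frakm_{1,\beta_1} = \pm\frakm_{1,\beta_2}$".
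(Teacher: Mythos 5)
Your proposal is correct and follows essentially the same route as the paper: apply Theorem~\ref{th:FOOO_orientation} with $k=1$ at $\lambda_1=\lambda_2$ to get the even sign exponent $\mu_{L_0}(\beta_1)/2+2=4$, hence $\frakm_{1,\beta_1}=\frakm_{1,\tau_*\beta_1}=\frakm_{1,\beta_2}$, and then propagate to general $\lambda_1,\lambda_2>0$ by the continuous dependence of the moduli spaces $\scM_2(J,\beta_i)$ and their orientations on the parameters (via Corollary~\ref{cr:ev_Fl}). Your closing caveat about tracking orientations continuously is exactly the point the paper also leans on, stated there in one line.
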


\begin{proof}
If $\lambda_1 = \lambda_2$, then $\tau$ is anti-symplectic,
and thus Theorem \ref{th:FOOO_orientation} implies
\begin{equation}
  \frakm_{1, \beta_1} 
  = (-1)^{\mu_{L_0}(\beta_1)/2 + 2} \frakm_{1, \tau_* \beta_1}
  = \frakm_{1, \beta_2}.
  \label{eq:sign_m1}
\end{equation}
Corollary \ref{cr:ev_Fl} implies that 
$\scM_2(J, \beta_i)$ depends continuously on $\lambda_1, \lambda_2$, 
and hence its orientation is independent of $\lambda_1, \lambda_2$.
Thus \eqref{eq:sign_m1} holds for general $\lambda_1, \lambda_2$.
\end{proof}

Then we have
\[
  \frakm_1 ([p]) = 
  \sum_{i=1}^2 \frakm_{1, \beta_i}([p]) T^{\beta_i \cap \omega}
  = \pm (T^{\lambda_1} + T^{\lambda_2}) [L_0],
\]
which implies the following.

\begin{theorem} \label{th:Fl(3)}
The Floer cohomology of $L_0$ over the Novikov ring $\Lambda_0$
is 
\[
  \HF(L_0, L_0; \Lambda_0) \cong 
  \Lambda_0/ T^{\min \{ \lambda_1, \lambda_2 \}} \Lambda_0.
\]
\end{theorem}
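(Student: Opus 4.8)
The plan is to observe that, once the Floer differential $\frakm_1$ has been pinned down as in the computation preceding the statement, the result follows by elementary linear algebra over $\Lambda_0$. First I would record the shape of the Floer complex: since $L_0 \cong \SU(2) \cong S^3$ we have $H^1(L_0;\Lambda_0) = H^2(L_0;\Lambda_0) = 0$, while $H^0(L_0;\Lambda_0) = \Lambda_0\,[L_0]$ and $H^3(L_0;\Lambda_0) = \Lambda_0\,[p]$, so the complex $(H^*(L_0;\Lambda_0),\frakm_1)$ is supported in degrees $0$ and $3$. Because the minimal Maslov number is $4$, the degree formula $\deg \frakm_{1,\beta}(x) = \deg x + 1 - \mu_{L_0}(\beta)$ shows that $\frakm_1$ vanishes identically on $H^0(L_0;\Lambda_0)$, that only the classes $\beta_1$ and $\beta_2$ contribute to $\frakm_1([p])$, and that $\frakm_0(1) = 0$; the last point makes $L_0$ unobstructed, so that $\HF(L_0,L_0;\Lambda_0) = \Ker\frakm_1 / \Image\frakm_1$ is the Floer cohomology with vanishing bounding cochain.

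Next I would feed in the identity $\frakm_1([p]) = \pm (T^{\lambda_1}+T^{\lambda_2})\,[L_0]$ established above from the classification of holomorphic disks in \pref{pr:disk_Fl}, the Fredholm regularity of the standard complex structure on the $\SU(2)$-homogeneous fiber, and the sign comparison $\frakm_{1,\beta_1} = \frakm_{1,\beta_2}$ of \pref{cr:orientation_Fl(3)}. Since $\Lambda_0$ is an integral domain and $T^{\lambda_1}+T^{\lambda_2} \ne 0$, the $\Lambda_0$-linear map $H^3(L_0;\Lambda_0) \to H^0(L_0;\Lambda_0)$, $a\,[p] \mapsto \pm a(T^{\lambda_1}+T^{\lambda_2})\,[L_0]$, is injective. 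Hence $\Ker\frakm_1 = H^0(L_0;\Lambda_0) = \Lambda_0\,[L_0]$ and $\Image\frakm_1 = (T^{\lambda_1}+T^{\lambda_2})\,\Lambda_0\,[L_0]$, so $\HF(L_0,L_0;\Lambda_0) \cong \Lambda_0/(T^{\lambda_1}+T^{\lambda_2})\Lambda_0$.

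Finally I would identify the ideal. Writing $\lambda = \min\{\lambda_1,\lambda_2\}$, one has $T^{\lambda_1}+T^{\lambda_2} = T^{\lambda}\bigl(1 + T^{|\lambda_1 - \lambda_2|}\bigr)$, with the convention $T^0 = 1$ when $\lambda_1 = \lambda_2$, and the factor $1 + T^{|\lambda_1-\lambda_2|}$ is a unit of the local ring $\Lambda_0$, since its image in the residue field $\Lambda_0/\Lambda_+ \cong \bC$ equals $1$ or $2$ and is in particular nonzero. Therefore $(T^{\lambda_1}+T^{\lambda_2})\Lambda_0 = T^{\min\{\lambda_1,\lambda_2\}}\Lambda_0$, and
\[
 \HF(L_0,L_0;\Lambda_0) \cong \Lambda_0 \big/ T^{\min\{\lambda_1,\lambda_2\}}\Lambda_0 .
\]

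As for difficulty, essentially no obstacle remains at this stage: all of the geometric input — the explicit disk classification, the regularity of the standard complex structure on the $\SU(2)$-homogeneous Lagrangian, and the orientation bookkeeping via \pref{th:FOOO_orientation} — has already been assembled, and what is left is the algebra above. If one wished to be scrupulous, the only steps deserving an extra sentence are the vanishing of $\frakm_0(1)$ and of $\frakm_1|_{H^0(L_0;\Lambda_0)}$, both immediate from $H^1(L_0) = H^2(L_0) = 0$ together with the minimal Maslov number being $4$.
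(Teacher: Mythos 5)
Your proposal is correct and follows essentially the same route as the paper: the paper computes $\frakm_1([p]) = \pm(T^{\lambda_1}+T^{\lambda_2})[L_0]$ from \pref{pr:disk_Fl}, \pref{cr:ev_Fl}, and \pref{cr:orientation_Fl(3)}, and then states the theorem as an immediate consequence. The only content you add is the (correct) elementary observation that $T^{\lambda_1}+T^{\lambda_2} = T^{\min\{\lambda_1,\lambda_2\}}\bigl(1+T^{|\lambda_1-\lambda_2|}\bigr)$ with the second factor a unit of the local ring $\Lambda_0$, which the paper leaves implicit.
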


\pref{th:main1} is an immediate consequence
of \pref{th:Fl(3)}.


\subsection{Holomorphic disks in $(\Gr(2,4), L_t)$}

We identify $\Gr(2,4)$ with the adjoint orbit of 
$\diag (\lambda, \lambda, -\lambda, -\lambda)$ for $\lambda >0$.
Note that the Kostant-Kirillov form and
the first Chern class are given by
\[
  \omega = 2 \lambda \omega_{\FS},
  \quad
  c_1(\Gr(2,4)) = 4 \omega_{\FS}, 
\]
respectively,
where $\omega_{\FS}$ is the Fubini-Study form 
on $\bP (\bigwedge^2 \bC^4)$.

Recall that $\pi_2 (\Gr(2,4)) \cong \bZ$ is generated by
a 1-dimensional Schubert variety $X_1$, which is a rational curve 
of degree one in $\bP (\bigwedge^2 \bC^4)$.
Since $\pi_1( \Gr(2,4)) = \pi_2 (L_t) = 0$ and
$\pi_1(L_t) \cong \bZ$, the exact sequence
\[
  0 \longrightarrow \pi_2 (\Gr(2,4)) \longrightarrow
  \pi_2(\Gr(2,4), L_t) \longrightarrow \pi_1(L_t) 
  \longrightarrow 0
\]
implies that $\pi_2(\Gr(2,4), L_t) \cong \bZ^2$.
Let $\beta_1, \beta_2$ be generators of $\pi_2(\Gr(2,4), L_t)$
such that $\beta_1 + \beta_2 = [X_1] \in \pi_2(\Gr(2,4))$.

\begin{example}
Consider a holomorphic curve $w : \bP^1 \to \Gr(2,4)$ of degree one 
defined by
\begin{equation}
  w(z) =  \left[ \sqrt{\frac{\lambda +t}{\lambda -t}}(z- \ii)
      : 0 : z - \ii : - z- \ii : 0
      : \sqrt{\frac{\lambda -t}{\lambda +t}}(z + \ii) \right].
      \label{eq:Schubert_Gr(2,4)}
\end{equation}
Since $w$ maps $\bR \cup \{\infty \}$ to $L_t$, 
the restrictions 
\begin{align*}
  v_+ &= w|_{\bH_+} : (\bH_+, \partial \bH_+) \longrightarrow (\Gr(2,4), L_t), \\
  v_- &= w|_{\bH_-} : (\bH_-, \partial \bH_-) \longrightarrow (\Gr(2,4), L_t)
\end{align*}
to the upper and lower half planes give holomorphic disks 
representing $\beta_1$ and $\beta_2$.
We define $\beta_1 = [v_+]$ and $\beta_2 = [v_-]$.
It is easy to see that the symplectic areas of $v_{\pm}$ are given by
\[
  \omega (\beta_1) = \int_{\bH_+} v_+^* \omega = \lambda + t,
  \quad
  \omega (\beta_2) = \int_{\bH_-} v_-^* \omega = \lambda - t.
\]
In the case where $t=0$, the disk $v_+$ sends $\ii \in \bH$ to
$v_+(\ii) = [0: 0: 0: -1: 0: 1]$,
which is in the fiber $\Phi^{-1}(\bsu_1)$ over the point 
$\bsu_1 \in \Delta$ defined by
$u_1^{(2)} = u_1^{(1)} = \lambda$ and
$u_2^{(2)} = 0$ (see Figure \ref{fg:polytope(2,4)}).
On the other hand, $v_-(-\ii) =[1: 0: 1: 0: 0: 0]$ lies on the fiber 
over the point $\bsu_2 \in \Delta$ defined by 
$u_2^{(2)} = u_1^{(1)} = -\lambda$ and
$u_1^{(2)} = 0$.
\end{example}

Let $\tau_t$ be the anti-holomorphic involution on $\Gr(2,4)$ 
defined in \eqref{eq:involution_Gr(2,4)}.
Note that $(\tau_t)_*$ is given by 
$(\tau_t)_*v (z) = \tau_t (v( - \overline{z}))$
for $v: (\bH, \partial \bH) \to (\Gr(2,4), L_t)$. 
Since $(\tau_t)_* v_+ = v_-$, the induced involution
on $\pi_2(\Gr(2,4), L_t)$ is given by
$(\tau_t)_* \beta_1 = \beta_2$.
Then the Maslov index of $\beta_i$ is given by
\[
  \mu_{L_t}(\beta_i) 
  = \frac 12 \left( \mu_{L_t}(\beta_i) + \mu_{L_t}((\tau_t)_* \beta_i) \right) 
  =  [X_1] \cap c_1(\Gr(2,4))   
  = 4
\] 
for $i=1,2$.

We describe holomorphic curves $w \colon \bP^1 \to \Gr(n, 2n)$ of 
degree one such that $w (\bR \cup \{\infty\})$ is contained 
in the Lagrangian fiber $L_t$.
\pref{pr:rat-curve_Gr} below is taken from
\cite[Theorem 2.1]{MR1837108},
which is well-known in control theory
(cf.~e.g.~\cite{MR0325201}).

\begin{proposition} \label{pr:rat-curve_Gr}
Suppose that a holomorphic curve $w \colon \bP^1 \to \Gr(k,n) = \Vtilde (k,n) / \GL(k, \bC)$ 
of degree $d$ is given by
\[
  w \colon z \longmapsto 
  \begin{pmatrix} I_k \\ F(z) \end{pmatrix}
  \mod \GL(k, \bC)
\]
for a rational function $F(z)$ with values in $(n-k) \times K$ matrices.
Then there exist matrix valued polynomials $P(z)$, $Q(z)$ 
of size $k \times k$ and $(n-k) \times k$ respectively
such that 
\begin{enumerate}
  \item $F(z) = Q(z) P(z)^{-1}$, i.e., the curve $w$ is given by
        \[
          w \colon z \longmapsto 
          \begin{pmatrix} P(z) \\ Q(z) \end{pmatrix}
          \mod \GL(k, \bC),
        \]
  \item $P(z)$ and $Q(z)$ are coprime in the sense there exist matrix valued
        polynomials $X(z)$, $Y(z)$ such that 
        $X(z) P(z) + Y(z) Q(z) = I_k$, and
  \item $\deg (\det P(z)) = d$.
\end{enumerate}
Such $P(z)$ and $Q(z)$ are unique up to multiplication of elements 
in $\GL(k, \bC [z])$.
\end{proposition}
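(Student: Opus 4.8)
The plan is to read this as a statement about right‑coprime factorizations of matrices over the principal ideal domain $\bC[z]$, followed by a comparison of the resulting algebraic quantity with the Plücker degree. First I would produce \emph{some} polynomial factorization: picking $p(z)\in\bC[z]$ to be a common denominator of the entries of $F$, the matrix $Q_0(z):=p(z)F(z)$ is polynomial of size $(n-k)\times k$ and $F=Q_0\,(p\,I_k)^{-1}$, so on the affine chart the curve is represented by the $n\times k$ polynomial matrix $M_0(z)=\binom{p(z)I_k}{Q_0(z)}$, whose rank is $k$ wherever $p(z)\ne 0$ (its top $k\times k$ minor is $p^{\,k}\not\equiv 0$).

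Next I would divide out a greatest common right divisor. By the Smith normal form for matrices over $\bC[z]$, write $M_0=U\,\binom{S}{0}\,V$ with $U\in\GL(n,\bC[z])$, $V\in\GL(k,\bC[z])$, $S=\operatorname{diag}(s_1,\dots,s_k)$ and $0$ the $(n-k)\times k$ zero block; set $M:=U\binom{I_k}{0}$ and $D:=S\,V$, so $M_0=M\,D$. The first $k$ rows of $U^{-1}$ furnish a polynomial left inverse of $M$; writing $M=\binom{P}{Q}$ with $P$ the top $k\times k$ block and $Q$ the bottom $(n-k)\times k$ block, and letting $X$ and $Y$ be the first $k$ and the last $n-k$ columns of that left inverse, this reads $XP+YQ=I_k$, which is property~(2). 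From $M_0=M\,D$ we get $PD=p\,I_k$ and $QD=Q_0$, so $D$ is invertible over $\bC(z)$ with $D^{-1}=p^{-1}P$, and $F=Q_0(p\,I_k)^{-1}=QP^{-1}$, which is property~(1) (and $\det P\not\equiv 0$, since $\det P\cdot\det D=p^{\,k}$). Finally $M$ represents the same curve as $\binom{I_k}{F}$, because $\binom{I_k}{F}=M\,P^{-1}$ wherever $\det P\ne 0$, so their column spans agree there.

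For property~(3), the homogeneous Plücker coordinates of $w(z)$ are the $k\times k$ minors $\Delta_I(z)$ of $M(z)$; these are the $k\times k$ minors of the first $k$ columns of the unimodular matrix $U$, hence (being part of a unimodular matrix) have no common polynomial factor, so the morphism $\iota\circ w\colon\bP^1\to\bP^{\binom{n}{k}-1}$ has degree $\max_I\deg\Delta_I$, and $\Delta_{\{1,\dots,k\}}=\det P$ is one of the $\Delta_I$. The remaining assertion, namely that $\det P$ attains this maximum so that $\deg\det P=d$, is the delicate point: it hinges on the convention for the degree of a curve in the Grassmannian together with, in general, a suitable normalization of the coprime factorization within its $\GL(k,\bC[z])$‑orbit (in control‑theoretic terms this is the identity ``McMillan degree of $F$ equals $\deg\det P$''), and it is exactly this that the cited reference establishes; I expect this degree computation to be the main obstacle. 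Uniqueness is then immediate: if $\binom{P'}{Q'}$ is another coprime factorization, then $R:=P^{-1}P'$ satisfies $\binom{P'}{Q'}=\binom{P}{Q}R$; multiplying $R$ by $I_k=XP+YQ$ gives $R=XP'+YQ'$, a polynomial matrix, and the Bézout identity $X'P'+Y'Q'=I_k$ for $(P',Q')$ gives $R^{-1}=(P')^{-1}P=X'P+Y'Q$, also polynomial, so $R\in\GL(k,\bC[z])$.
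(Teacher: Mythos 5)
The paper does not actually prove this proposition: it is imported verbatim from the control-theory literature (\cite[Theorem 2.1]{MR1837108}), so there is no internal argument to compare yours against. Your construction of the factorization is the standard one and is correct as far as it goes: clearing denominators, passing to the Smith normal form $M_0=U\begin{pmatrix}S\\0\end{pmatrix}V$, and taking $M=\begin{pmatrix}P\\Q\end{pmatrix}$ to be the first $k$ columns of $U$ does yield $F=QP^{-1}$ with the B\'ezout identity coming from the top $k$ rows of $U^{-1}$, and your uniqueness argument (both $R=XP'+YQ'$ and $R^{-1}=X'P+Y'Q$ are polynomial) is complete. The observation that the Pl\"ucker coordinates $\Delta_I$ of $M$ have no common factor, because the Laplace expansion of the unit $\det U$ along its first $k$ columns is a combination of the $\Delta_I$, is also correct and gives $d=\max_I\deg\Delta_I$.

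The one genuine gap is the step you flag yourself, $\deg\det P=\max_I\deg\Delta_I$, and it is worth noting two things about it. First, it is \emph{false} as the proposition is literally stated: for $F(z)=z$ in $\Gr(1,2)$ the coprime factorization is $P=1$, $Q=z$, the curve $[1:z]$ has degree $1$, but $\deg\det P=0$. The missing hypothesis is that $F$ is regular at $z=\infty$ (equivalently, the curve stays in the affine chart at $\infty$), which holds in the paper's application since there $F(\infty)$ is a scalar multiple of a unitary matrix. Second, granted that hypothesis, the gap closes in one line: since the rows-$I$ submatrix of $M=\begin{pmatrix}I_k\\F\end{pmatrix}P$ is the rows-$I$ submatrix of $\begin{pmatrix}I_k\\F\end{pmatrix}$ times $P$, one has $\Delta_I=m_I\det P$ where $m_I$ is the corresponding maximal minor of $\begin{pmatrix}I_k\\F\end{pmatrix}$; regularity of $F$ at $\infty$ makes each rational function $m_I=\Delta_I/\det P$ bounded as $z\to\infty$, hence $\deg\Delta_I\le\deg\det P$ for every $I$, and since $\det P=\Delta_{\{1,\dots,k\}}$ the maximum is attained there. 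With that supplement (and the regularity caveat made explicit) your proof is complete and self-contained, which is more than the paper provides.
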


Note that \eqref{eq:U(n)fiber_in_V} implies that
the $\U(n)$-fiber $L_t \subset \Gr(n, 2n) = \Vtilde (n, 2n)/ \GL(n, \bC)$ consists of 
\[
  \begin{pmatrix} I_n \\
  \sqrt{(\lambda - t)/(\lambda + t)} \, A
  \end{pmatrix}
  \mod \GL(n, \bC)
\]
for $A \in \U(n)$.

\begin{proposition}\label{pr:disk_Gr}
  Let $w \colon \bP^1 \to \Gr(n,2n)$ be a holomorphic curve of degree one
  such that $w (\bR \cup \{ \infty \}) \subset L_t$,
  and let $F(z)$ denote the corresponding rational function with values
  in $n \times n$ matrices.
  By the $\U(n)$-action, we assume that 
  \begin{equation}
    F(\infty) = \sqrt{\frac{\lambda - t}{\lambda + t}} I_n
    \in \sqrt{\frac{\lambda - t}{\lambda + t}} \U(n),
  \label{eq:bdry_cond_at_infty}
  \end{equation}
  and set
  \begin{equation}
    F(0) = \sqrt{\frac{\lambda - t}{\lambda + t}} A
    \label{eq:bdry_cond_at_zero}
  \end{equation}
   for $A \in \U(n)$.
  Then there exist 
  \[
    a = \begin{pmatrix} a_1 \\ \vdots \\ a_n \end{pmatrix}
    \in S^{2n-1}/S^1 = \bP^{n-1}
  \]
  and $c \in \bC \setminus \bR$ such that 
  \[
    A = I_n + \left( \frac{c^2}{|c|^2} -1 \right) a a^*,
  \]
  and 
  \begin{equation}
    F(z) = \sqrt{\frac{\lambda - t}{\lambda + t}}
    \frac 1{z - \overline{c}} (z I_n - \overline{c} A)
    = \sqrt{\frac{\lambda - t}{\lambda + t}}
    \left( I_n - \frac{c - \overline{c}}{z - \overline{c}} aa^* \right) .
  \label{eq:disk_Gr(n,2n)}
  \end{equation}
\end{proposition}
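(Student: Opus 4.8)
The plan is to put $w$ in the matrix normal form of \pref{pr:rat-curve_Gr} and then read off the structure of $F$ from the unitary boundary condition. Write $\kappa := \sqrt{(\lambda - t)/(\lambda + t)}$, so that by \eqref{eq:U(n)fiber_in_V} the fiber $L_t$ consists of the $\begin{pmatrix} I_n \\ \kappa A \end{pmatrix} \bmod \GL(n,\bC)$ with $A \in \U(n)$, and assume the normalization \eqref{eq:bdry_cond_at_infty}. First I would apply \pref{pr:rat-curve_Gr} to write $w(z) = \begin{pmatrix} P(z) \\ Q(z) \end{pmatrix} \bmod \GL(n,\bC)$ with $P, Q$ coprime polynomial matrices, $F(z) = Q(z) P(z)^{-1}$, and $\deg \det P(z) = 1$. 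Then $\det P$ has a single simple root $c_0$, so $F = Q \cdot \operatorname{adj}(P)/\det P$ is holomorphic on $\bP^1 \setminus \{ c_0 \}$ with at most a simple pole at $c_0$. Since $w$ is non-constant and $w(\bR \cup \{\infty\}) \subset L_t$ lies in the single affine chart $\{\begin{pmatrix} I_n \\ \ast \end{pmatrix}\}$, the map $F$ is finite on $\bR \cup \{\infty\}$, which forces $c_0 \notin \bR$; writing $F(z) = (R_1 z + R_0)/(z - c_0)$ we then have $R_1 = F(\infty) = \kappa I_n$ and, by \eqref{eq:bdry_cond_at_zero}, $R_0 = -c_0 F(0) = -\kappa c_0 A$ with $A := \kappa^{-1} F(0) \in \U(n)$, so $F(z) = \kappa (z I_n - c_0 A)/(z - c_0)$.

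Next I would impose the Lagrangian boundary condition $F(x) \in \kappa\,\U(n)$ for $x \in \bR$. This is equivalent to $(x I_n - c_0 A)(x I_n - \overline{c_0} A^{*}) = |x - c_0|^2 I_n$ for all real $x$; matching coefficients of this polynomial identity and using $A A^{*} = I_n$ gives $c_0 A + \overline{c_0} A^{*} = (c_0 + \overline{c_0}) I_n$, hence $c_0 A^2 - (c_0 + \overline{c_0}) A + \overline{c_0} I_n = 0$. Since $c_0 \notin \bR$, the roots $1$ and $\overline{c_0}/c_0$ of this quadratic are distinct, so the unitary (hence diagonalizable) matrix $A$ decomposes as $A = I_n + (\overline{c_0}/c_0 - 1) P$ for an orthogonal projection $P$, and $\operatorname{rank} P \ge 1$ since $P = 0$ would make $F$ constant and $w$ of degree zero.

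Finally I would use the degree one condition to pin down $\operatorname{rank} P = 1$. The pair $((z - c_0) I_n,\ \kappa(z I_n - c_0 A))$ also represents $F$, but need not be coprime: in a basis adapted to $\bC^n = \ker P \oplus \operatorname{Im} P$, of sizes $n - r$ and $r := \operatorname{rank} P$, it becomes $\operatorname{diag}((z-c_0)I_{n-r},\, (z-c_0)I_r)$ and $\operatorname{diag}(\kappa(z-c_0)I_{n-r},\, \kappa(z-\overline{c_0})I_r)$, whose greatest common right divisor is $\operatorname{diag}((z-c_0)I_{n-r},\, I_r)$ because $z - c_0$ and $z - \overline{c_0}$ are coprime. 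Dividing this out gives a coprime polynomial representation of $F$ whose $P$-block has determinant of degree $r$; since two coprime representations differ by right multiplication by an element of $\GL(n,\bC[z])$, which has constant nonzero determinant, \pref{pr:rat-curve_Gr} forces $r = \deg \det P(z) = 1$, so $P = a a^{*}$ for a unit vector $a$, unique up to phase and hence an element of $\bP^{n-1}$. Setting $c := \overline{c_0} \in \bC \setminus \bR$, so that $\overline{c_0}/c_0 = c^2/|c|^2$, yields $A = I_n + (c^2/|c|^2 - 1) a a^{*}$, and substituting this into $F(z) = \kappa (z I_n - c_0 A)/(z - c_0)$ gives the two asserted formulas. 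I expect the one genuinely delicate step to be this last one---turning ``degree one'' into ``$\operatorname{rank} P = 1$'' via the greatest common right divisor and the uniqueness clause of \pref{pr:rat-curve_Gr}; checking $c_0 \notin \bR$ and reversing the construction to obtain the converse are then routine.
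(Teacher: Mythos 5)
Your proof is correct and follows essentially the same route as the paper's: reduce to the normal form $F(z)=\sqrt{(\lambda-t)/(\lambda+t)}\,(zI_n-\bar{c}A)/(z-\bar{c})$ via \pref{pr:rat-curve_Gr}, extract the relation $\bar{c}A+cA^*=(c+\bar{c})I_n$ from unitarity of $F(x)$ for $x\in\bR$, deduce that $A$ is diagonalizable with eigenvalues $1$ and $c/\bar{c}$, and use the degree-one condition to force the nontrivial eigenspace to have rank one. The only differences are cosmetic: you get the spectral decomposition from the quadratic identity satisfied by $A$ rather than from the skew-Hermiticity of $\bar{c}A-\re(c)I_n$, and you spell out the coprimality argument for $\rank P=1$ and the reason $c\notin\bR$ somewhat more explicitly than the paper does.
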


\begin{proof}
Let $F(z) = Q(z)P(z)^{-1}$ be the factorization given in \pref{pr:rat-curve_Gr}.
Then 
the assumptions \eqref{eq:bdry_cond_at_infty}, \eqref{eq:bdry_cond_at_zero},
and $\deg (\det P(z)) =1$ imply that
$F(z)$ has the form
\[
    F(z) = \sqrt{\frac{\lambda - t}{\lambda + t}}
    \frac 1{z - \overline{c}} (z I_n - \overline{c} A)
\]
for some $c \in \bC$.
The Lagrangian boundary condition $w(\bR \cup \{ \infty \}) \subset L_t$
implies that
\[
  \frac 1{x - \overline{c}} (x I_n - \overline{c} A) \in \U(n)
\]
for any $x \in \bR$, which means
$\overline{c} A + c A^* = (c + \overline{c}) I_n$,
or equivalently, $\overline{c}A - \re(c)I_n$ is skew-hermitian.
Hence $\overline{c}A - \re(c)I_n$ has pure imaginary eigenvalues 
$\sqrt{-1} \alpha_1, \dots, \sqrt{-1} \alpha_n$, and
can be diagonalized by some $g \in \U(n)$;
\[
  g^* (\overline{c}A - \re(c)I_n) g =
  \diag(\sqrt{-1}\alpha_1, \dots, \sqrt{-1}\alpha_n).
\]
Since 
\[
  g^* A g = \diag \left(
    \frac{\re (c) + \sqrt{-1}\alpha_1}{\overline{c}}, \dots, 
    \frac{\re (c) + \sqrt{-1}\alpha_n}{\overline{c}}
    \right) \in \U(n)
\]
has eigenvalues of unit norm,
we have $\alpha_i = \pm \im (c)$ for $i = 1, \dots, n$.
After the action of a permutation matrix, 
we may assume that $g^*Ag$ has the form
\begin{equation}
  g^* A g = \diag( \underbrace{c/\overline{c}, \dots, c/\overline{c}}_k,
            \underbrace{1, \dots, 1}_{n-k})
  =: C
    \label{eq:det_A}
\end{equation}
for some $k$.
Then $F(z)$ is given by
\[
  F(z) = \sqrt{\frac{\lambda - t}{\lambda + t}}
    \frac 1{z - \overline{c}}  g (z I_n - \overline{c} C) g^*
  = \sqrt{\frac{\lambda - t}{\lambda + t}}
  g \diag \left(
    \frac{z - c}{z - \overline{c}}, \dots, \frac{z - c}{z - \overline{c}},
    1, \dots, 1
    \right) g^*
\]
In particular, we have
\begin{equation*}
  \det F(z) = \left( \frac{\lambda - t}{\lambda + t} \right)^{n/2}
  \left( \frac{z - c}{z - \overline{c}} \right)^k.
\end{equation*}
The condition $\deg (\det P(z)) = 1$ implies that $k=1$, i.e.,
\[
  C = \diag (c/\overline{c}, 1, \dots, 1)
  = (c/\overline{c} - 1) E_{11} + I_n,
\]
where  $E_{11} = \diag (1, 0, \dots , 0) \in \mathfrak{gl}(n, \bC)$.
Let $a \in S^{2n-1} \subset \bC^n$ be the first column of $g$.
Then we have
\[
  A = g \left( \left( \frac{c^2}{|c|^2} - 1 \right) E_{11} + I_n \right) g^* 
  = \left( \frac{c^2}{|c|^2} -1 \right) a a^* + I_n,
\]
which proves the proposition.
\end{proof}

\begin{remark}
\begin{enumerate}
  \item The equation \eqref{eq:det_A} (with $k=1$) implies that 
          $\det A = c / \overline{c} = c^2/|c|^2$.
  \item After the $\bR_{>0}$-action on the domain, we may assume that $|c| = 1$.
\end{enumerate}
\end{remark}

We now assume that $n=2$.
The sign of $\im (c) = \im \sqrt{\det A}$ corresponds to the homotopy class
of the holomorphic disk $v = w|_{\bH}$.
The curve $w$ corresponding to $a = [1:0]$ and $c = -\sqrt{-1}$
coincides with \eqref{eq:Schubert_Gr(2,4)}, and hence
$w|_{\bH} = v_+$ represents $\beta_1$. 
Thus $v = w|_{\bH}$ represents $\beta_1$ (resp. $\beta_2$)
when $\im (c) = \im \sqrt{\det A} <0$ (resp. $\im (c) >0$).


\subsection{Floer cohomologies of the $\U(2)$-fibers in $\Gr(2,4)$}

Since the minimal Maslov number of the $U(2)$-fiber $L_t$ is 
$\mu_{L_t}(\beta_i) =4$, 
we have the following by degree reason.

\begin{lemma}
The potential function $\po \colon H^1(L_t; \Lambda_0) \to \Lambda_0$ 
for $L_t$ is trivial:
\[
  \po \equiv 0.
\]
\end{lemma}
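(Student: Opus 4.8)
The plan is to use a pure degree (dimension) argument, exactly parallel to the $\Fl(3)$ case treated above. Recall that for a weak bounding cochain $b \in H^1(L_t;\Lambda_0)$ the potential is defined by $\sum_{k\ge 0} \frakm_k(b^{\otimes k}) = \po(b)\,\bfe_0$, and since $L_t$ is spin, oriented, and compact, each $\frakm_{k,\beta}$ shifts degree by $\deg x + 1 - \mu_{L_t}(\beta)$ on each input of degree $1$; summing the $k$ copies of $b$, the class $\frakm_{k,\beta}(b^{\otimes k})$ lives in degree $k + (1-\mu_{L_t}(\beta)) = k+1-\mu_{L_t}(\beta)$ coming from the Maslov-index term, but more to the point the output lands in $H^{\ast}(L_t;\Lambda_0)$ in a degree determined entirely by $\mu_{L_t}(\beta)$. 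For the coefficient of $\bfe_0 \in H^0(L_t;\Lambda_0)$ to be nonzero, a contributing class $\beta \in \pi_2(\Gr(2,4),L_t)$ must satisfy $\mu_{L_t}(\beta) = 2$.

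The key input, established just above in this subsection, is that $\pi_2(\Gr(2,4),L_t) \cong \bZ^2$ is generated by $\beta_1, \beta_2$ with $\mu_{L_t}(\beta_1) = \mu_{L_t}(\beta_2) = 4$ and $\beta_1 + \beta_2 = [X_1]$, so that every class $\beta$ with a nonconstant holomorphic representative has $\mu_{L_t}(\beta) \in 4\bZ_{\ge 0}$ — in particular $\mu_{L_t}(\beta) \ge 4 > 2$ whenever $\beta \neq 0$, while $\beta = 0$ contributes only the classical (constant-disk) terms $\frakm_1(b) = 0$ on $H^1$ and $\frakm_2(b,b) = b \cup b = 0$ (odd degree) and $\frakm_k$ with $k \ge 3$ vanishing for the classical part. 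Hence $\sum_{k\ge 0}\frakm_k(b^{\otimes k})$ has no component along $\bfe_0$ coming from any $\beta$, so $\po(b) = 0$; since this holds for all $b \in H^1(L_t;\Lambda_0)$, the function $\po$ is identically zero. This also re-confirms, a posteriori, that $H^1(L_t;\Lambda_0) \subseteq \scMwhat(L_t)$, since the weak Maurer–Cartan equation is then automatically satisfied.

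The only genuine subtlety — the part I would be most careful about — is the assertion that the minimal Maslov number of $L_t$ is actually realized by $\beta_1$ and $\beta_2$ and that no class of smaller positive Maslov index carries a (stable) holomorphic disk; this is exactly what the preceding homotopy-exact-sequence computation $\pi_2(\Gr(2,4),L_t)\cong\bZ^2$ together with $\mu_{L_t}(\beta_1)=\mu_{L_t}(\beta_2)=4$ and additivity of $\mu_{L_t}$ under gluing is designed to supply, so strictly speaking the lemma is immediate once those facts are in hand. I would also remark that, since $L_t \cong \U(2) \cong S^1 \times S^3$ has $H^2(L_t;\Lambda_0) = 0$, even bubbling contributions that might a priori land in degree $2$ (from $\mu = 2$ disks) cannot produce a class there; but as $\mu = 2$ disks do not exist this is moot. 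The proof is therefore a two-line degree count, and I would present it as such, citing the already-proven Maslov index computation and the structure of $\pi_2(\Gr(2,4),L_t)$.
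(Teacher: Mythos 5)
Your argument is correct and is essentially the paper's own proof: the paper disposes of this lemma in one line, "since the minimal Maslov number of the $\U(2)$-fiber $L_t$ is $\mu_{L_t}(\beta_i)=4$, we have the following by degree reason," which is exactly your degree count showing only Maslov index $2$ classes could contribute to the $\bfe_0$-component. (One small slip: the degree of $\frakm_{k,\beta}(b^{\otimes k})$ for $k$ inputs of degree $1$ is $2-\mu_{L_t}(\beta)$, not $k+1-\mu_{L_t}(\beta)$, but this does not affect the conclusion that a nonzero coefficient of $\bfe_0$ forces $\mu_{L_t}(\beta)=2$.)
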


The cohomology of $L_t \cong S^1 \times S^3$ is given by 
\[
  H^*(L_t) \cong H^*(S^1) \otimes H^*(S^3).
\]
Let $\bfe_1 \in H^1(L_t; \bZ) \cong H^1(S^1; \bZ)$ and
$\bfe_3 \in H^3(L_t; \bZ) \cong H^3(S^3; \bZ)$ be the generators,
and write $b = x \bfe_1 \in H^1(L_t; \Lambda_0)$.
Since $\deg \frakm_{1, \beta}^b = 1 - \mu_{L_t}(\beta)$ and
the minimal Maslov number is four,
the only nontrivial parts of the Floer differential $\frakm_1^b$ are
\begin{align*}
  \frakm_{1, \beta_i}^b &: H^4(L_t) \cong H^1(S^1) \otimes H^3(S^3)
     \longrightarrow H^1(L_t) \cong H^1(S^1), \\
  \frakm_{1, \beta_i}^b &: H^3(L_t) \cong  H^3(S^3)
     \longrightarrow H^0(L_t) \cong \Lambda_0
\end{align*}
for $i =1, 2$.

Since $(\Gr(2,4), L_t)$ is $\U(2)$-homogeneous,
any $J$-holomorphic disk is Fredholm regular 
for the standard complex structure $J$
by \cite[Proposition 3.2.1]{1401.4073}.
Hence one has $\dim \scM_2(J, \beta_i) = 7$ for $i=1,2$.
Now \pref{pr:disk_Gr} implies the following:

\begin{corollary} \label{cr:moduli_G(2,4)}
  Define $f \colon (0, 2\pi) \times \bP^1 \to \U(2)$ by
  $f(\theta, a) = (e^{\sqrt{-1}\theta} -1)aa^* + I_2$.
  For $i=1, 2$, 
  the moduli space $\scM_2(J, \beta_i)$ has an open dense subset
  diffeomorphic to $\U(2) \times (0, 2\pi) \times \bP^1$
  such that the evaluation map
  is given by
  \[
    \U(2) \times (0, 2\pi) \times \bP^1 \longrightarrow 
    L_t \times L_t \cong \U(2) \times \U(2),
    \quad
    (g, \theta, a) \longmapsto (g, g \cdot f(\theta, a)).
  \]
\end{corollary}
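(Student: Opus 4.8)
The plan is to read the moduli space off the explicit classification of degree one holomorphic curves in \pref{pr:disk_Gr}, in exactly the way \pref{cr:ev_Fl} was deduced from \pref{pr:disk_Fl}. Since $(\Gr(2,4),L_t)$ is $\U(2)$-homogeneous, every $J$-holomorphic disk is Fredholm regular by \cite[Proposition 3.2.1]{1401.4073}, so $\scM_2(J,\beta_i)$ is a smooth manifold of dimension $7$. First I would note that any holomorphic disk $v$ representing $\beta_i$ doubles along $L_t=\Fix(\tau_t)$, with $\tau_t$ the anti-holomorphic involution \eqref{eq:involution_Gr(2,4)}, into a degree one holomorphic curve $w\colon\bP^1\to\Gr(2,4)$ with $w(\bR\cup\{\infty\})\subset L_t$, and that $v$ is recovered as the restriction of $w$ to a half-plane; by the discussion following \pref{pr:disk_Gr}, the sign of $\im(c)$ records whether $v$ represents $\beta_1$ or $\beta_2$. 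So, up to the choice of two boundary marked points, $\scM_2(J,\beta_i)$ is described by \pref{pr:disk_Gr}.

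Then I would build the chart. On the open dense locus where the two boundary marked points are distinct, an automorphism of $\bH$ brings them to $(z_0,z_1)=(\infty,0)$, and the residual dilations $z\mapsto sz$ with $s>0$ act freely transitively on the modulus $|c|$, so one may also impose $|c|=1$, which exhausts $\PSL(2,\bR)$. As $K\cong\U(2)$ acts freely and transitively on $L_t$, the curve factors uniquely as $w=g\cdot w_{a,c}$, where $w_{a,c}$ is the curve of \pref{pr:disk_Gr} with rational function \eqref{eq:disk_Gr(n,2n)}, $g\in K\cong\U(2)$, $a\in\bP^1$, and $c=e^{\sqrt{-1}\phi}$ runs over the open half-circle in the unit circle cut out by the sign of $\im(c)$. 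This is a manifestly smooth bijection from $\U(2)\times(\text{half-circle})\times\bP^1$ onto an open dense subset of $\scM_2(J,\beta_i)$; since source and target are smooth of the same dimension $7$, invariance of domain, together with the injectivity of its differential (which is visible from the formulas), upgrades it to a diffeomorphism onto its image. Writing $e^{\sqrt{-1}\theta}=c^2/|c|^2=\det A$ reparametrizes the half-circle by $\theta\in(0,2\pi)$ and identifies the matrix $A$ of \pref{pr:disk_Gr} with $f(\theta,a)=(e^{\sqrt{-1}\theta}-1)aa^*+I_2$.

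It then remains to compute the evaluation map. By construction $\ev_0=w(\infty)=g\cdot(\text{base point of }L_t)$, while $\ev_1=w(0)$ is, by \eqref{eq:bdry_cond_at_zero}, the image under $g$ of the point of $L_t$ whose $\U(2)$-coordinate is $A=f(\theta,a)$. Fixing the identification $L_t\cong\U(2)$ so that the $K$-action becomes left translation, which is the convention used in \pref{cr:ev_Fl}, and reorienting $\theta$ if necessary, these read $\ev_0=g$ and $\ev_1=g\cdot f(\theta,a)$, as claimed.

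I expect the only genuine obstacle to be the bookkeeping in this last step: matching $\phi$, $\theta$, the eigenvalue $c^2/|c|^2$ and the matrix $A$, and pinning down the identification $L_t\cong\U(2)$ so that $f(\theta,a)$ appears on the correct side and is not inadvertently replaced by its inverse, the latter ambiguity being harmless since $f(\theta,a)^{-1}=f(2\pi-\theta,a)$. Openness and density of the chart are routine, its complement being the closed, positive-codimension locus where the two marked points collide; no disk bubbling occurs there, because $\mu_{L_t}(\beta_i)=4$ is the minimal Maslov number.
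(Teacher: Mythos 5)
Your proposal is correct and follows essentially the same route as the paper, which states the corollary as a direct consequence of \pref{pr:disk_Gr} together with Fredholm regularity and the dimension count, exactly as you do (the paper even records your normalization $|c|=1$ and the relation $e^{\sqrt{-1}\theta}=c^2/|c|^2=\det A$ in the surrounding remarks). The only content you add is the explicit bookkeeping of the doubling, the residual $\PSL(2,\bR)$-action, and the free transitive $K$-action on $L_t$, all of which the paper leaves implicit.
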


Note that $e^{\sqrt{-1}\theta} = \det f(\theta, a)$ is related to $c \in S^1$ 
in \pref{pr:disk_Gr} by 
$c = \exp (\sqrt{-1}(\theta /2 + \pi))$ or $c = \exp (\sqrt{-1}\theta /2)$
corresponding to $i=1, 2$.

Next we consider $\scM_{k+l+2}(J, \beta_i)$.
For a rational curve $w \colon \bP^1 \to \Gr(2,4)$ given by \eqref{eq:disk_Gr(n,2n)}, 
the composition 
$\det \circ w|_{\partial \bH} \colon \partial \bH = \bR \to L_t \cong \U(2) \to S^1$ 
is given by
\[
  x \longmapsto \frac{x - c}{x - \overline{c}}.
\] 
Hence each boundary point $x \in \partial \bH$ is determined by the argument of 
$\det w(x) = (x- c)/(x-\overline{c})$.
Fixing the $0$-th and $(k+1)$-st boundary marked points, 
we have the following.

\begin{corollary} \label{cr:moduli_G(2,4)_2}
  The moduli space $\scM_{k+l+2}(J, \beta_i)$ has an open dense subset
  diffeomorphic to 
  \[
    \biggl\{ (g, \theta, a, (t_i), (s_j)) 
      \in \U(2) \times (0, 2\pi) \times \bP^1 \times \bR^k \times \bR^l \biggm|  
      \begin{array}{c} 
      0< t_1 < \dots < t_k < \theta, \\
      \theta < s_1 < \dots < s_l < 2\pi
      \end{array}
      \biggr\}
  \]
  on which the evaluation maps 
  $\ev \colon  \scM_{k+l+2}(J, \beta_i) \to L_t \cong \U(2)$ satisfy
  \[
    (\ev_0, \ev_{k+1}) \colon (g, \theta, a, (t_i), (s_j)) 
    \longmapsto (g, g \cdot f(\theta, a))
  \]
  and
  \[
    \det \ev_i (g, \theta, a, (t_i), (s_j)) = 
    \begin{cases}
      e^{\sqrt{-1} t_i} \det g , & i=1, \dots, k,\\
      e^{\sqrt{-1} \theta} \det g , & i = k+1, \\
      e^{\sqrt{-1} s_{i-k-1}} \det g , & i=k+2, \dots, k+l+2.
    \end{cases}
  \]
\end{corollary}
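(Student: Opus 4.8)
The plan is to deduce this from \pref{cr:moduli_G(2,4)}, which already handles the case $k = l = 0$, by recording the positions of the extra boundary marked points. Since $(\Gr(2,4), L_t)$ is $\U(2)$-homogeneous, \cite[Proposition 3.2.1]{1401.4073} shows that every $J$-holomorphic disk in the class $\beta_i$ is Fredholm regular, so $\scM_{k+l+2}(J, \beta_i)$ is a smooth manifold of dimension $\dim L_t + \mu_{L_t}(\beta_i) + (k+l+2) - 3 = k+l+7$. The forgetful map that remembers only the $0$-th and the $(k+1)$-st marked points sends $\scM_{k+l+2}(J, \beta_i)$ to $\scM_2(J, \beta_i)$ and, over the open dense subset of \pref{cr:moduli_G(2,4)}, is a fibre bundle whose fibre over a disk $w$ is the space of ordered placements of the remaining $k+l$ marked points on the two boundary arcs of $\partial \bH$ determined by $z_0$ and $z_{k+1}$; pulling this locus back gives the open dense subset of $\scM_{k+l+2}(J, \beta_i)$ that we shall describe.

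First I would normalize the domain by putting $z_0 = \infty$ and $z_{k+1} = 0$, which uses two of the three dimensions of $\operatorname{Aut}(\bH) \cong \PSL(2, \bR)$, and then use the residual $\bR_{>0}$ to arrange $|c| = 1$ in the description \eqref{eq:disk_Gr(n,2n)} of $w$ from \pref{pr:disk_Gr}. Writing $A(z) = I_2 - \frac{c - \overline{c}}{z - \overline{c}}aa^*$, so that $w(z)$ corresponds to $A(z) \in \U(2)$ under $L_t \cong \U(2)$ when $z$ is real, one has $A(\infty) = I_2$ and $A(0) = I_2 + (c^2/|c|^2 - 1)aa^* = f(\theta, a)$ with $f$ as in \pref{cr:moduli_G(2,4)}. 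Letting $g \in \U(2)$ act freely rather than normalizing $F(\infty)$, exactly as in \pref{cr:moduli_G(2,4)}, one obtains $(g, \theta, a) \in \U(2) \times (0,2\pi) \times \bP^1$ together with $\ev_0 = g$ and $\ev_{k+1} = g \cdot f(\theta, a)$; taking determinants gives $\det \ev_{k+1} = e^{\sqrt{-1}\theta}\det g$, since $f(\theta, a) - I_2$ has rank one with nonzero eigenvalue $e^{\sqrt{-1}\theta} - 1$.

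It remains to parametrize the two boundary arcs. As recorded just before the statement, $\det \circ\, w|_{\partial \bH}$ is the map $x \mapsto (x - c)/(x - \overline{c})$; since $|x - c| = |x - \overline{c}|$ for every real $x$, this takes values in the unit circle, so its argument $\phi(x) := \arg\big((x-c)/(x-\overline{c})\big)$ is strictly monotone on $\bR$ with limiting value $0$ at $\pm\infty$ and $\phi(0) = \arg(c/\overline{c}) = \arg(c^2) = \theta$. Hence $\phi$ restricts to a diffeomorphism from the arc between $z_0$ and $z_{k+1}$ onto $(0, \theta)$ and from the complementary arc onto $(\theta, 2\pi)$, so an ordered placement of the extra marked points corresponds bijectively to $(t_1, \dots, t_k, s_1, \dots, s_l)$ with $0 < t_1 < \cdots < t_k < \theta < s_1 < \cdots < s_l < 2\pi$, where $t_i = \phi(z_i)$ and $s_j = \phi(z_{k+1+j})$; this is the claimed description of the open dense subset. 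The remaining evaluation formulas then follow from $\det \ev_i = \det g \cdot \det A(z_i) = \det g \cdot (z_i - c)/(z_i - \overline{c}) = e^{\sqrt{-1}\phi(z_i)}\det g$. The one point that requires genuine care is the bookkeeping in this last step: one must verify that $\phi$ is monotone in the \emph{correct} direction and that the intervals $(0,\theta)$ and $(\theta,2\pi)$ are attached to the correct arcs for \emph{both} homotopy classes, which comes down to matching the orientation of $\partial \bH$ against the identification of $\beta_1$ and $\beta_2$ with the restrictions of $w$ to the two half-planes in \eqref{eq:Schubert_Gr(2,4)} and the sign of $\im c$. This is the main obstacle, but it is combinatorial rather than analytic.
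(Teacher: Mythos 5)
Your argument is correct and is essentially the paper's own: the paper's (very terse) justification is exactly the observation that $\det\circ w|_{\partial\bH}$ is the M\"obius map $x\mapsto (x-c)/(x-\overline{c})$, so that the extra boundary marked points are recorded by the arguments $t_i, s_j$ on the two arcs cut out by $z_0$ and $z_{k+1}$, which is what you carry out (with more care about the forgetful fibration over $\scM_2(J,\beta_i)$ and the identification $\phi(0)=\theta$). The orientation bookkeeping you flag at the end is likewise left implicit in the paper, which only fixes ``a suitable orientation'' later in the proof of \pref{th:Gr(2,4)}.
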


\begin{theorem} \label{th:Gr(2,4)}
  For $b = x \bfe_1 \in H^1(L_0; \Lambda_0/2\pi \sqrt{-1} \bZ) 
  \cong \Lambda_0/2 \pi \sqrt{-1} \bZ$, 
  the deformed Floer differential $\frakm_1^b$ is given by
  \begin{align}
    \frakm_1^b (\bfe_3) 
    &= e^x T^{\lambda + t} + e^{-x} T^{\lambda - t}, 
        \label{eq:Fl_diff_1} \\
    \frakm_1^b (\bfe_1 \otimes \bfe_3) 
    &= (e^x T^{\lambda + t} + e^{-x} T^{\lambda - t}) \bfe_1.
      \label{eq:Fl_diff_2}
  \end{align}
  Hence the Floer cohomology of $(L_t, b)$ is 
  \[
 \HF((L_t, b), (L_t,b); \Lambda_0) \cong 
    \begin{cases}
      H^*(L_0; \Lambda_0) \quad 
        &\text{if $t = 0$ and $x= \pm \pi \sqrt{-1} /2$},\\
      (\Lambda_0 / T^{\min \{ \lambda-t, \lambda +t \}} \Lambda_0)^2 
        &\text{otherwise}.
    \end{cases}
  \]
  The Floer cohomology over the Novikov field is given by
  \[
 \HF((L_t, b), (L_t,b); \Lambda) \cong 
    \begin{cases}
      H^*(L_0; \Lambda) \quad 
        &\text{if $t = 0$ and $x= \pm \pi \sqrt{-1} /2$},\\
      0 &\text{otherwise}.
    \end{cases}
  \]
\end{theorem}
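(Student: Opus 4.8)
\emph{Step 1: which $A_\infty$-operations survive.}
Since $\po$ vanishes identically on $H^1(L_t;\Lambda_0)$ (shown above), every $b=x\,\bfe_1$ is a weak bounding cochain with $\frakm_0^b(1)=0$, so $\frakm_1^b$ is a genuine differential and $\frakm^b$ satisfies the $A_\infty$-relations with no $\frakm_0$-term; in particular $\bfe_0$ remains a cycle and a unit. By \pref{pr:disk_Gr} the only relative classes carrying holomorphic disks lie in $\bZ_{\ge0}\beta_1+\bZ_{\ge0}\beta_2$, with $\mu_{L_t}(\beta_1)=\mu_{L_t}(\beta_2)=4$ the minimal Maslov number. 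As $\deg\frakm_{1,\beta}^b=1-\mu_{L_t}(\beta)$ and $\dim L_t=4$, only $\beta_1$ and $\beta_2$ contribute, so the only components of $\frakm_1^b$ that can be nonzero are $H^3(L_t)\to H^0(L_t)$ and $H^4(L_t)\to H^1(L_t)$; the $\beta=0$ part vanishes because $S^1\times S^3$ is formal and $\frakm_{2,0}(b,y)+\frakm_{2,0}(y,b)=0$ for the unital sign conventions, while $\frakm_1^b(\bfe_1)=0$ already because $H^2(L_t)=0$ and $\mu\ge4$. Thus it remains to compute $\frakm_1^b(\bfe_3)$ and $\frakm_1^b(\bfe_1\bfe_3)$.

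\emph{Step 2: the value on $\bfe_3$.}
Expand $\frakm_1^b(\bfe_3)=\sum_{i=1,2}\sum_{k,l\ge0}x^{k+l}\,T^{\omega(\beta_i)}\frakm_{k+l+1,\beta_i}(\bfe_1^{\otimes k},\bfe_3,\bfe_1^{\otimes l})$, where $\omega(\beta_1)=\lambda+t$ and $\omega(\beta_2)=\lambda-t$. I would evaluate each term via \pref{cr:moduli_G(2,4)_2}: representing $\bfe_1$ by $\tfrac1{2\pi}\det^*(d\arg)$, the $b$-insertions pull back on a fibre of $\ev_0$ to $\tfrac1{2\pi}dt_p$ and $\tfrac1{2\pi}ds_q$, and integrating these over the ordered simplices $0<t_1<\dots<t_k<\theta$ and $\theta<s_1<\dots<s_l<2\pi$ and summing over $k,l$ reassembles them into the factor $\exp\!\big(x\,\langle\bfe_1,\partial\beta_i\rangle\big)$, which is $e^{x}$ for one of $\beta_1,\beta_2$ and $e^{-x}$ for the other since $\langle\bfe_1,\partial\beta_1\rangle=-\langle\bfe_1,\partial\beta_2\rangle=\pm1$. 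The remaining integral, of $\ev_{k+1}^*\bfe_3$ over the $(\theta,a)$-directions and the $\ev_0$-fibre, equals $\pm1$ because, once the values $\ev_0$ and $\ev_{k+1}$ are fixed generically, \pref{pr:disk_Gr} determines the curve uniquely (just as \pref{cr:ev_Fl} makes the analogous count $\pm1$ for $\Fl(3)$). This gives $\frakm_1^b(\bfe_3)=\pm e^{\pm x}T^{\lambda+t}\pm e^{\mp x}T^{\lambda-t}$, and the signs are pinned down as in \pref{cr:orientation_Fl(3)}: when $t=0$ the involution $\tau_0$ of \eqref{eq:involution_Gr(2,4)} is anti-symplectic, so \pref{th:FOOO_orientation} forces the $\beta_1$- and $\beta_2$-contributions to agree, and since $L_t$ and $\omega$ vary continuously in $t$, the identity \eqref{eq:Fl_diff_1} persists for all $t$.

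\emph{Step 3: the value on $\bfe_1\bfe_3$ and the cohomology.}
Rather than repeat the moduli computation I would use the quadratic $A_\infty$-relation $\frakm_1^b\frakm_2^b(\bfe_1,\bfe_3)=\pm\frakm_2^b(\frakm_1^b\bfe_1,\bfe_3)\pm\frakm_2^b(\bfe_1,\frakm_1^b\bfe_3)$: here $\frakm_1^b\bfe_1=0$, $\frakm_1^b\bfe_3=P\,\bfe_0$ with $P:=e^xT^{\lambda+t}+e^{-x}T^{\lambda-t}$ by Step 2, $\frakm_2^b(\bfe_1,\bfe_0)=\bfe_1$ by unitality, and a degree count gives $\frakm_2^b(\bfe_1,\bfe_3)=\pm\bfe_1\bfe_3+(\text{a multiple of }\bfe_0)$, on which $\frakm_1^b$ is $\pm\frakm_1^b(\bfe_1\bfe_3)$; hence $\frakm_1^b(\bfe_1\bfe_3)=\pm P\,\bfe_1$, and fixing the sign once more via the $t=0$ involution yields \eqref{eq:Fl_diff_2}. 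Now $\frakm_1^b$ is multiplication by $P$ on each of the two-step subcomplexes $\bfe_3\mapsto P\bfe_0$ and $\bfe_1\bfe_3\mapsto P\bfe_1$, and zero on $\bfe_0,\bfe_1$, so $\HF^0\cong\HF^1\cong\Lambda_0/P\Lambda_0$ and $\HF^3\cong\HF^4\cong\{a\in\Lambda_0:aP=0\}$. As $e^{\pm x}$ are units, $P=0$ exactly when $t=0$ and $e^x+e^{-x}=0$, i.e. $x\equiv\pm\pi\sqrt{-1}/2$ modulo $2\pi\sqrt{-1}\bZ$; then $\frakm_1^b=0$ and $\HF((L_0,b),(L_0,b);\Lambda_0)\cong H^*(L_0;\Lambda_0)$. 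Otherwise $P$ is a nonzerodivisor of valuation $\min\{\lambda-t,\lambda+t\}$, so $\HF\cong(\Lambda_0/T^{\min\{\lambda-t,\lambda+t\}}\Lambda_0)^2$ concentrated in degrees $0,1$. Tensoring with the Novikov field $\Lambda=\Lambda_0[T^{-1}]$ annihilates all torsion, giving $H^*(S^1\times S^3;\Lambda)$ in the first case and $0$ in the second.

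\emph{The main obstacle.}
The substantive part is Step 2: proving that the geometric ``core'' disk count is precisely $\pm1$ and — more delicately — determining the $e^x$-versus-$e^{-x}$ assignment together with all intervening signs. This forces one to combine the explicit disk parametrizations of \pref{pr:disk_Gr}, \pref{cr:moduli_G(2,4)} and \pref{cr:moduli_G(2,4)_2} with the orientation formalism of \pref{th:FOOO_orientation} and a continuity argument in $t$ — exactly the delicate bookkeeping already met in the treatment of $\Fl(3)$. The linear-algebra step (Step 3, second half) and the passage to $\Lambda$ are routine.
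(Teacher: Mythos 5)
Your proposal is correct in outline and, for the central computation of $\frakm_{1,\beta_i}^b(\bfe_3)$, follows the same route as the paper: the explicit parametrization of $\scM_{k+l+2}(J,\beta_i)$ from \pref{cr:moduli_G(2,4)_2}, integration of the $b$-insertions over the ordered simplices to reassemble the factor $e^{\pm x}$, and the involution-plus-continuity argument of \pref{cr:orientation_Fl(3)} to relate the $\beta_1$- and $\beta_2$-contributions. Two remarks.

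First, there is a genuine (though repairable) imprecision in your Step 2: you justify the value $\pm 1$ of the ``core'' integral by saying that a disk is uniquely determined by generic values of $\ev_0$ and $\ev_{k+1}$, ``just as in \pref{cr:ev_Fl}''. That argument does not transfer from $\Fl(3)$ to $\Gr(2,4)$: here $\dim\scM_2(J,\beta_i)=7<8=\dim(L_t\times L_t)$, and the image of $(\ev_0,\ev_1)$ is the codimension-one locus of pairs $(g,gh)$ with $h\in\U(2)$ having $1$ as an eigenvalue, so a generic pair of boundary points bounds no disk at all and no point count is available. What is actually needed is the fiber integral $\int_{(0,2\pi)\times\bP^1} f^*\bfe_3$, and the fact that it equals $1$ rests on the explicit computation $f^*\bfe_3=(1-\cos\theta)\,\frac{d\theta}{2\pi}\wedge\omega_{\bP^1}$ carried out in the Lemma following the statement of \pref{th:Gr(2,4)}; this is a nontrivial degree-one statement about the map $f$, not a formal consequence of \pref{pr:disk_Gr}.

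Second, your Step 3 genuinely differs from the paper and is an attractive shortcut: the paper evaluates $\frakm_1^b(\bfe_1\otimes\bfe_3)$ by a second pass through the moduli space, observing that only the term $g^*\bfe_1\otimes f^*\bfe_3$ survives the degree count, whereas you deduce \eqref{eq:Fl_diff_2} from \eqref{eq:Fl_diff_1} via the $A_\infty$-relation $\frakm_1^b\frakm_2^b(\bfe_1,\bfe_3)=\pm\frakm_2^b(\bfe_1,\frakm_1^b\bfe_3)$ together with unitality and the vanishing of $\frakm_0^b$. This buys economy at the price of two extra inputs: strict (or homotopy) unitality of $\bfe_0$ for the deformed operations, and the identification $\frakm_2^b(\bfe_1,\bfe_3)\equiv\pm\,\bfe_1\bfe_3 \bmod \Lambda_0\,\bfe_0$, which uses the intrinsic formality of $H^*(S^1\times S^3)$ to kill the higher classical products. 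Both are available in the framework the paper works in, so the deduction is sound. Your remaining homological algebra and the passage to $\Lambda$ agree with the paper.
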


Recall that $\bfe_1, \bfe_3 \in H^*(\U(2))$ are given by
\[
  \bfe_1 = \frac{1}{2\pi \sqrt{-1}} \tr (g^{-1} d g)
           = \frac{1}{2\pi \sqrt{-1}} d \log (\det g),
  \quad
  \bfe_3 = \frac{1}{24 \pi^2} \tr \ld (g^{-1} dg)^3 \rd,
\]
where $g^{-1}dg$ is the left-invariant Maurer-Cartan form on $\U(2)$.

\begin{lemma}
For 
$f(\theta, a)  = (e^{\sqrt{-1}\theta} -1)aa^* + I_2$, we have
\begin{align}
  f^*\bfe_1 &= \frac{1}{2\pi} \tr (f^{-1} df) 
    =  \frac{d \theta}{2\pi} ,
  \label{eq:A^*e_1} \\
  f^*\bfe_3 &= \frac{1}{24\pi^2} \tr (f^{-1} df)^3
    = (1 - \cos \theta) \frac{d \theta}{2\pi} \wedge \omega_{\bP^1},
  \label{eq:A^*e_3}
\end{align}
where $\omega_{\bP^1}$ is the Fubini-Study form on $\bP^1$
normalized in such a way that
\[
  \int_{\bP^1} \omega_{\bP^1} = 1.
\]
\end{lemma}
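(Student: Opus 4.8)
The plan is to compute the pulled-back Maurer--Cartan form $f^{-1}\,df$ in closed form and then extract its trace and the trace of its cube by elementary manipulations, using only that $P := aa^{*}$ (for $a$ of unit norm) is a rank-one orthogonal Hermitian projection. Since $P^{2}=P$, the inverse is $f(\theta,a)^{-1} = I_{2}+(e^{-\sqrt{-1}\theta}-1)P$, and differentiating $P^{2}=P$ gives $P\,dP\,P = 0$; hence $dP = M+N$ with $M := P\,dP\,(I_{2}-P)$, $N := (I_{2}-P)\,dP\,P$, and one has $M^{2}=N^{2}=0$, $P\,dP = M$, $(dP)\,P = N$. Starting from $df = \sqrt{-1}e^{\sqrt{-1}\theta}\,d\theta\,P+(e^{\sqrt{-1}\theta}-1)\,dP$ and simplifying $f^{-1}\,df$ with $P^{2}=P$ (the $d\theta$-terms collapse), one obtains
\[
  f^{-1}\,df \;=\; \sqrt{-1}\,d\theta\,P \;+\; (1-e^{-\sqrt{-1}\theta})\,M \;+\; (e^{\sqrt{-1}\theta}-1)\,N .
\]

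Taking the trace, $\tr P = 1$ and $\tr M = \tr N = 0$ (the latter from $(I_{2}-P)P=0$ and cyclicity of the trace) give $\tr(f^{-1}\,df) = \sqrt{-1}\,d\theta$, which yields the first identity. For the second, write $f^{-1}\,df = B+C$ with $B := \sqrt{-1}\,d\theta\,P$ and $C$ the $\bP^{1}$-directional part; then $B\wedge B = 0$ and $C\wedge C\wedge C = 0$ (the latter for dimension reasons on $\bP^{1}$), so $(f^{-1}\,df)^{3} = BC^{2}+CBC+C^{2}B$, and graded cyclicity of the trace makes the three summands equal, giving $\tr\big((f^{-1}\,df)^{3}\big) = 3\,\tr(C^{2}B)$. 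From $M^{2}=N^{2}=0$ one gets $C^{2} = -2(1-\cos\theta)(MN+NM)$, and $(I_{2}-P)P=0$ leaves only $MNP = P\,dP\,(I_{2}-P)\,dP\,P$ in $C^{2}P$; using cyclicity, $P^{2}=P$ and $P\,dP\,P = 0$ then reduces $\tr(C^{2}P)$ to $-2(1-\cos\theta)\,\tr(P\,dP\wedge dP)$, so
\[
  f^{*}\bfe_{3} \;=\; \frac{1}{24\pi^{2}}\,\tr\big((f^{-1}\,df)^{3}\big)
  \;=\; \frac{-\sqrt{-1}\,(1-\cos\theta)}{4\pi^{2}}\;d\theta\wedge\tr(P\,dP\wedge dP).
\]

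It then remains to identify the $2$-form $\tr(P\,dP\wedge dP) = \tr(M\wedge N)$ on $\bP^{1}$. Over the affine chart, represent the tautological line by the unnormalized column vector $v = (1,w)$ and its orthogonal complement by $u = (-\overline{w},1)$, so $P = vv^{*}/(v^{*}v)$ with $v^{*}v = 1+|w|^{2}$. A short computation gives $v^{*}(dP)\,u = d\overline{w}$ and $u^{*}(dP)\,v = dw$, whence $\tr(M\wedge N) = (1+|w|^{2})^{-2}\,d\overline{w}\wedge dw = 2\sqrt{-1}\,(1+|w|^{2})^{-2}\,dx\wedge dy$ for $w = x+\sqrt{-1}\,y$. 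Since $\int_{\bP^{1}}(1+|w|^{2})^{-2}\,dx\wedge dy = \pi$ and $\omega_{\bP^{1}}$ is normalized so that $\int_{\bP^{1}}\omega_{\bP^{1}}=1$, we obtain $\tr(P\,dP\wedge dP) = 2\pi\sqrt{-1}\,\omega_{\bP^{1}}$. Substituting this into the last display gives $f^{*}\bfe_{3} = (1-\cos\theta)\,\frac{d\theta}{2\pi}\wedge\omega_{\bP^{1}}$, as asserted.

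The one point requiring genuine care is bookkeeping: controlling the Koszul signs in the graded-cyclic reduction $\tr((f^{-1}\,df)^{3}) = 3\,\tr(C^{2}B)$ and in the successive reorderings of matrix-valued forms, and pinning down the precise constant $2\pi\sqrt{-1}$ relating $\tr(P\,dP\wedge dP)$ to the unit-volume Fubini--Study form. No step is conceptually subtle, but a slip in either place would corrupt the factor $1-\cos\theta$ or its normalization.
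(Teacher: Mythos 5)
Your proof is correct, and it takes a genuinely different route from the paper's. The paper exploits the $\SU(2)$-equivariance of $f$ (with respect to the standard action on $\bP^1$ and the adjoint action on $\U(2)$) to reduce the whole computation to the single point $a=[1:0]$, where it writes out $f^{-1}df$ as an explicit $2\times 2$ matrix of $1$-forms and reads off $\tr(f^{-1}df)^3 = 3(2-e^{\sqrt{-1}\theta}-e^{-\sqrt{-1}\theta})\sqrt{-1}\,d\theta\wedge da_2\wedge d\overline{a}_2$ directly. You instead work globally and coordinate-freely with the projector calculus: $P^2=P$ forces $P\,dP\,P=0$, the decomposition $dP=M+N$ with $M^2=N^2=0$ gives the closed form $f^{-1}df=\sqrt{-1}\,d\theta\,P+(1-e^{-\sqrt{-1}\theta})M+(e^{\sqrt{-1}\theta}-1)N$ (which specializes exactly to the paper's matrix at $a=[1:0]$), and graded cyclicity collapses the cube to $3\tr(C^2B)=-6\sqrt{-1}(1-\cos\theta)\,d\theta\wedge\tr(P\,dP\wedge dP)$; the final identification $\tr(P\,dP\wedge dP)=2\pi\sqrt{-1}\,\omega_{\bP^1}$ is the standard Chern--Weil computation on $\bP^1$, and your normalization constants all check against the paper's local formula $\omega_{\bP^1}=\tfrac{\sqrt{-1}}{2\pi}da_2\wedge d\overline{a}_2$. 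What the paper's approach buys is brevity -- one pointwise matrix multiplication after the symmetry reduction; what yours buys is that the factor $(1-\cos\theta)$ and the appearance of the unit-volume Fubini--Study form are explained structurally rather than verified numerically, and no equivariance argument is needed. The signs and constants in your reduction (including $\tr(C^2P)=-2(1-\cos\theta)\tr(P\,dP\wedge dP)$ and the factor $\tfrac{-\sqrt{-1}}{4\pi^2}$) are all correct.
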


\begin{proof}
The first assertion \eqref{eq:A^*e_1} follows from 
$\det f = e^{\sqrt{-1} \theta}$.
Since $f$ is $\SU(2)$-equivariant with respect to 
the natural action on $\bP^1$ and 
the adjoint action on $\U(2)$,
it suffices to show \eqref{eq:A^*e_3} at $a = [1: 0] \in \bP^1$.
A direct calculation gives
\[
  f^{-1}df = \begin{pmatrix}
     \sqrt{-1}d \theta & - (e^{-\sqrt{-1} \theta} -1) d \overline{a}_2 \\
    (e^{\sqrt{-1} \theta} -1) da_2 & 0
  \end{pmatrix},
\]
so that
\[
  \tr (f^{-1}df)^3  = 3 (2 - e^{\sqrt{-1} \theta} -  e^{- \sqrt{-1} \theta})
     \sqrt{-1}d \theta \wedge da_2 \wedge d \overline{a}_2
\]
at $a = [1:0]$.
On the other hand, the Fubini-Study form on $\bP^1$ is given by
\[
  \omega_{\bP^1} = \frac{\sqrt{-1}}{2\pi} da_2 \wedge d \overline{a}_2
\]
at $a = [1 : 0]$, which proves \eqref{eq:A^*e_3}.
\end{proof}

\begin{proof}[Proof of \pref{th:Gr(2,4)}]
Note that for $m : \U(2) \times \U(2) \to \U(2)$, $(g_1, g_2) \mapsto g_1 g_2$,
we have $m^*\bfe_i = \pi_1^* \bfe_i + \pi_2^* \bfe_i$ for $i=1, 3$, 
where $\pi_1, \pi_2 \colon \U(2) \times \U(2) \to \U(2)$ are the projections 
to the first and the second factors.
Then $\ev_j^* \bfe_i$ are given by
\begin{align*}
  \ev_i^* \bfe_1 &= \frac{1}{2\pi} dt_i + g^* \bfe_1, 
  \quad i = 1, \dots, k,\\
  \ev_{k+1+i}^* \bfe_1 &= \frac{1}{2\pi} dt_i + g^* \bfe_1, 
  \quad i = 1, \dots, l, \\
  \ev_{k+1}^* \bfe_3 &= f^* \bfe_3 + g^* \bfe_3
    = (1 - \cos \theta) \frac{d \theta}{2\pi} \wedge \omega_{\bP^1} + g^* \bfe_3,
\end{align*}
where $g^* \bfe_i$ is the pull-back of $\bfe_i$ by the projection 
\[
  \U(2) \times (0, 2\pi) \times \bP^1 \longrightarrow \U(2),
  \quad (g, \theta, a) \longmapsto g
\] 
to the first factor.
For $\theta \in (0, 2\pi)$, set 
\begin{align*}
  D_1(\theta) &= \{ (t_1, \dots, t_k) \in \bR^k  \mid
    0< t_1 < \dots < t_k < \theta \}, \\
  D_2(\theta) &= \{ (s_1, \dots, s_l) \in \bR^l  \mid   
    \theta < s_1 < \dots < s_l < 2\pi \}.
\end{align*}
Taking a suitable orientation on $\scM_{k+l+2}(\beta_1, J)$,
we have from Corollary \ref{cr:moduli_G(2,4)_2} that 
\begin{align}
  &\frakm_{k+l+1, \beta_1} ( \underbrace{b, \dots, b}_k, \bfe_3, 
    \underbrace{b, \dots, b}_l ) \notag \\
  & \quad = \int_{(0, 2\pi) \times \bP^1}
     \left( \int_{D_1(\theta)} \left(\frac{x}{2\pi} \right)^k dt_1 
          \wedge \dots \wedge dt_k \right) 
     \left( \int_{D_2(\theta)} \left(\frac{x}{2\pi} \right)^l ds_1 
          \wedge \dots \wedge ds_l \right)
      (1 - \cos \theta) \frac{d \theta}{2\pi} \wedge \omega_{\bP^1}  \notag \\
  & \quad = \int_{(0, 2\pi)}
     \frac{1}{k!} \left( \frac{\theta}{2\pi} \cdot x \right)^k 
     \frac{1}{l!} 
     \left( \left(1-\frac{\theta}{2\pi} \right) x\right)^l
      (1 - \cos \theta) \frac{d \theta}{2\pi}.
  \label{eq:int_along_fiber}
\end{align}
Hence 
\begin{align*}
  \frakm_{1, \beta_1}^b (\bfe_3) 
  &= \int_0^{2\pi} \sum_{k, l \ge 0}
     \frac{1}{k!} \left( \frac{\theta}{2\pi} \cdot x \right)^k 
     \frac{1}{l!} 
     \left( \left(1-\frac{\theta}{2\pi} \right) x\right)^l 
     (1 - \cos \theta) \frac{d \theta}{2\pi} \\
  &= \int_0^{2\pi} e^{(\theta / 2\pi)x}
                e^{(1- \theta / 2\pi)x} (1 - \cos \theta) \frac{d \theta}{2\pi} \\
  &= \int_0^{2\pi} e^x  (1 - \cos \theta) \frac{d \theta}{2\pi} \\
  &= e^x.
\end{align*}
The same argument as the proof of \pref{cr:orientation_Fl(3)} gives
\begin{align*}
  \frakm_{k+l+1, \beta_2} ( \underbrace{b, \dots, b}_k, \bfe_3, 
    \underbrace{b, \dots, b}_l ) 
  &= (-1)^{k+l}
    \frakm_{k+l+1, \beta_1} ( \underbrace{b, \dots, b}_l, \bfe_3, 
    \underbrace{b, \dots, b}_k ) \\
  &= \frakm_{k+l+1, \beta_1} ( \underbrace{-b, \dots, -b}_l, \bfe_3, 
    \underbrace{-b, \dots, -b}_k ),
\end{align*}
so that
\[
  \frakm_{1, \beta_2}^b (\bfe_3) = e^{-x}.
\]
Hence we have
\begin{equation*}
  \frakm_1^b (\bfe_3) 
    = \sum_{i=1}^2 \frakm_{1, \beta_i}^b (\bfe_3) T^{\beta_i \cap \omega}
    = e^x T^{\lambda +t} + e^{-x} T^{\lambda -t}.
\end{equation*}
Next we compute $\frakm_1^b(\bfe_1 \otimes \bfe_3) \in H^1(L_0)$.
Note that
\[
  \ev_{k+1}(\bfe_1 \otimes \bfe_3) 
  = (g^*\bfe_1 + f^* \bfe_1) \otimes (g^*\bfe_3 + f^* \bfe_3)
  = g^*\bfe_1 \otimes f^* \bfe_3 + \dots.
\]
Since only the term $g^*\bfe_1 \otimes f^* \bfe_3$ contribute to
$\frakm_{k+l+1, \beta_i}(b, \dots, b, \bfe_1 \otimes \bfe_3, b, \dots, b)$ 
by degree reason, we have
\[
  \frakm_{k+l+1, \beta_i} ( \underbrace{b, \dots, b}_k, \bfe_1 \otimes \bfe_3, 
    \underbrace{b, \dots, b}_l ) 
  = 
  \frakm_{k+l+1, \beta_i} ( \underbrace{b, \dots, b}_k, \bfe_1, 
    \underbrace{b, \dots, b}_l ) g^* \bfe_1.
\]
Hence we obtain
\begin{align*}
  \frakm_1^b (\bfe_1 \otimes \bfe_3) 
    &= \sum_{i=1}^2 \frakm_{1, \beta_i}^b (\bfe_1 \otimes \bfe_3) 
        T^{\beta_i \cap \omega} \\
    &= \sum_{i=1}^2 \frakm_{1, \beta_i}^b (\bfe_1) T^{\beta_i \cap \omega} \bfe_1 \\
    &= (e^x T^{\lambda +t} + e^{-x} T^{\lambda-t}) \bfe_1.
\end{align*}
\end{proof}

\begin{remark}
Iriyeh, Sakai, and Tasaki \cite{MR3127820} computed Floer cohomologies
$\HF(L, L' ;\bZ/2\bZ)$ of real forms in a compact Hermitian symmetric space,
i.e., fixed point sets $L = \Fix(\tau)$, $L' = \Fix(\tau')$ 
of anti-holomorphic and anti-symplectic involutions $\tau$, $\tau'$.
In particular, the Floer cohomology of the $\U(2)$-fiber $L_0 = \Fix(\tau_0)$ 
with coefficients in $\bZ/2\bZ$ is given by
\[
 \HF(L_0, L_0; \bZ/2 \bZ) \cong H^*(L_0; \bZ/2\bZ)
  \cong (\bZ/2\bZ)^4.
\]
On the other hand, \eqref{eq:Fl_diff_1} and \eqref{eq:Fl_diff_2}
implies that
\[
 \HF(L_0, L_0; \Lambda^{\bZ}_0) 
  \cong (\Lambda_0^{\bZ}/2 T^{\lambda} \Lambda_0^{\bZ})^2,
\]
where
\[
 \Lambda_0^{\bZ}
  = \left\{ \left.
      \sum_{i=1}^\infty a_i T^{\lambda_i} \, \right| \,
       a_i \in \bZ, \ 
       \lambda_i \ge 0, \ 
       \lim_{i \to \infty} \lambda_i = \infty
     \right\}
\]
is the Novikov ring over $\bZ$.
\end{remark}

\begin{remark}
Here we consider a Lagrangian $\U(n)$-fiber $L_t$ in $\Gr(n, 2n)$ for general $n$.
The one-parameter subgroup  $g_{\theta} = \exp (\theta \xi)$ of  $\U(2n)$
given by
\[
  \xi = \begin{pmatrix} 0 & -E_{11} \\ 
                   E_{11} & 0 \end{pmatrix}
  \in \fraku(2n)
\]
sends
\[
  x = \left( \begin{array}{ccc|ccc}
      t & & & \overline{x}^1_1 & \dots & \overline{x}^n_1 \\
        & \ddots & & \vdots & & \vdots \\
        & & t & \overline{x}^1_n & \dots & \overline{x}^n_n \\ \hline
      x^1_1 & \dots & x^1_n & -t & & \\
      \vdots & & \vdots & & \ddots & \\
      x^n_1 & \dots & x^n_n & & & -t
      \end{array} \right)
  \in L_t
\]
to $\Ad_{g_{\theta}} (x) \in \scO_{\bslambda}$ 
whose upper-left $n \times n$ block is given by
\[
  (\Ad_{g_{\theta}} (x) )^{(n)} =
  \begin{pmatrix}
    t (1- 2 \sin^2 \theta) - (x^1_1 + \overline{x}^1_1) \sin \theta \cos \theta & 
      - x^1_2 \sin \theta & \dots & -x^1_n \sin \theta \\
    - \overline{x}^1_n \sin \theta &  t  \\
    \vdots & & \ddots \\
    - \overline{x}^1_n \sin \theta & & & t
  \end{pmatrix}.
\]
If $\Ad_{g_{\theta}} (x)$ is still in $L_t$, i.e., 
$(g_{\theta} x g_{\theta}^*)^{(n)} = t I_n$, then we have
$x^1_2 = \dots = x^1_n = 0$ and
$\re x^1_1 = - t \tan \theta$.
Since $|\re x^1_1| \le \sqrt{\lambda^2 - t^2}$, 
one has $g_{\theta} (L_t) \cap L_t = \emptyset$ if 
\[
  |\theta| > \arctan \sqrt{\frac{\lambda^2 - t^2}{t^2}}.
\] 
Note that the moment map $\mu : \scO_{\bslambda} \to \frak{u}(2n)$
of the $\U(2n)$-action is given by $\mu(x) = (\ii/2\pi) x$
in our setting.
Hence the Hamiltonian of $g_{\theta}$ is given by
\[
  H(x) = \frac{\ii}{2\pi} \langle x, \xi \rangle.
\]
Since 
$\max_{\scO_{\lambda}} H = \lambda / \pi$ and 
$\min_{\scO_{\lambda}} H = - \lambda / \pi$,
the norm of $g_{\theta}$ is given by
\[
  \int_0^{\theta} 
  \Bigl( \max_{\scO_{\lambda}} H - \min_{\scO_{\lambda}} H \Bigr)
  d \theta
  = \frac{2 \lambda}{\pi} \theta.
\]
Hence the displacement energy of $L_t$ is bounded from above by
\[
  h(t) =
  \frac{2 \lambda}{\pi} \arctan \sqrt{\frac{\lambda^2 - t^2}{t^2}}.
\]
Note that $h(t)$ is a concave function on $[-\lambda, \lambda]$ such that
$h(\pm \lambda) = 0$, $h(0) = \lambda$, and
$h(t) > \min \{ \lambda-t, \lambda+t \}$ for $t \ne 0, \pm \lambda$.
\end{remark}

\begin{theorem} \label{th:Gr(2,4)_2}
The Floer cohomology of the pair $(L_0, \pi \sqrt{-1}/2 \bfe_1)$,
$(L_0, -\pi \sqrt{-1}/2 \bfe_1)$ is given by
\[
  \HF( (L_0, \pm \pi \sqrt{-1}/2 \bfe_1), 
       (L_0, \mp \pi \sqrt{-1}/2 \bfe_1) ; \Lambda_0 )
  \cong (\Lambda_0 / T^{\lambda} \Lambda_0)^2.
\]
In particular, the Floer cohomology over the Novikov field is
trivial;
\[
  \HF( (L_0, \pm \pi \sqrt{-1}/2 \bfe_1), 
         (L_0, \mp \pi \sqrt{-1}/2 \bfe_1) ; \Lambda )
  = 0.
\]
\end{theorem}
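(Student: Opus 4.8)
The plan is to compute the deformed Floer differential $\delta_{b_0,b_1}$ for the admissible pair $b_0 = \pi\sqrt{-1}/2\,\bfe_1$, $b_1 = -\pi\sqrt{-1}/2\,\bfe_1$ directly, following the strategy of the proof of \pref{th:Gr(2,4)} (the pair with the signs reversed is handled by the symmetric computation). Since $\po$ vanishes identically on $H^1(L_0;\Lambda_0)$ one has $\po(b_0) = \po(b_1) = 0$, so $\delta_{b_0,b_1}$ is defined and squares to zero. As in \pref{th:Gr(2,4)}, the minimal Maslov number of $L_0 \cong \U(2)$ is $4$, so by degree reasons the only possibly nontrivial components of $\delta_{b_0,b_1}$ are $H^3(L_0) \to H^0(L_0)$ and $H^4(L_0) \to H^1(L_0)$; and exactly as in that proof only the summand $g^*\bfe_1 \otimes f^*\bfe_3$ of the pulled-back form survives for the second component, so $\delta_{b_0,b_1}(\bfe_1 \otimes \bfe_3) = \delta_{b_0,b_1}(\bfe_3)\,\bfe_1$ and it suffices to compute $\delta_{b_0,b_1}(\bfe_3)$.

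To evaluate $\delta_{b_0,b_1}(\bfe_3)$ I would split it into the contributions of $\beta_1$ and $\beta_2$. For the $\beta_1$-part, use the description of $\scM_{k_0+k_1+2}(J,\beta_1)$ in \pref{cr:moduli_G(2,4)_2}, now inserting $b_0$ at the $k_0$ marked points preceding the $\bfe_3$-input and $b_1$ at the $k_1$ marked points following it; running the integration along the fiber of \pref{th:Gr(2,4)} verbatim and summing over $k_0,k_1$ gives
\[
  \sum_{k_0,k_1 \ge 0}\frakm_{k_0+k_1+1,\beta_1}(b_0^{\otimes k_0},\bfe_3,b_1^{\otimes k_1})
  = \int_0^{2\pi} e^{(\theta/2\pi)x_0}\,e^{(1-\theta/2\pi)x_1}\,(1-\cos\theta)\,\frac{d\theta}{2\pi},
\]
with $x_0 = \pi\sqrt{-1}/2$ and $x_1 = -\pi\sqrt{-1}/2$. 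For the $\beta_2$-part I would apply the orientation comparison of \pref{th:FOOO_orientation} exactly as in the proof of \pref{cr:orientation_Fl(3)}: this replaces $\frakm_{k_0+k_1+1,\beta_2}(b_0^{\otimes k_0},\bfe_3,b_1^{\otimes k_1})$ by $(-1)^{k_0+k_1}\frakm_{k_0+k_1+1,\beta_1}(b_1^{\otimes k_1},\bfe_3,b_0^{\otimes k_0})$, and after absorbing the sign the corresponding sum is $\int_0^{2\pi} e^{-(\theta/2\pi)x_1}\,e^{-(1-\theta/2\pi)x_0}\,(1-\cos\theta)\,\frac{d\theta}{2\pi}$.

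The crucial point is that, because $x_1 = -x_0$, both exponents collapse to $x_0(\theta-\pi)/\pi = \sqrt{-1}(\theta-\pi)/2$, so the $\beta_1$- and $\beta_2$-contributions coincide with the single integral $\int_0^{2\pi} e^{\sqrt{-1}(\theta-\pi)/2}(1-\cos\theta)\,\frac{d\theta}{2\pi}$, and they are weighted by $T^{\beta_1\cap\omega} = T^{\beta_2\cap\omega} = T^{\lambda}$ since $t=0$. An elementary computation gives $\int_0^{2\pi} e^{\sqrt{-1}\theta/2}(1-\cos\theta)\,d\theta = \frac{16}{3}\sqrt{-1}$, hence $\delta_{b_0,b_1}(\bfe_3) = \frac{16}{3\pi}\,T^\lambda\,\bfe_0$ and $\delta_{b_0,b_1}(\bfe_1 \otimes \bfe_3) = \frac{16}{3\pi}\,T^\lambda\,\bfe_1$ — only the nonvanishing of this coefficient matters. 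Since $\frac{16}{3\pi} \in \bC \subset \Lambda_0$ is a unit, over $\Lambda_0$ one gets $\Ker\delta_{b_0,b_1} = \Lambda_0\bfe_0 \oplus \Lambda_0\bfe_1$ and $\Image\delta_{b_0,b_1} = T^\lambda\Lambda_0\bfe_0 \oplus T^\lambda\Lambda_0\bfe_1$, so $\HF \cong (\Lambda_0/T^\lambda\Lambda_0)^2$; tensoring with $\Lambda$ makes $\delta_{b_0,b_1}$ an isomorphism of the degree $3 \oplus 4$ part onto the degree $0 \oplus 1$ part, and the Floer cohomology over $\Lambda$ vanishes.

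The main obstacle I anticipate is not computational but bookkeeping: carrying the $\beta_2$-orientation argument of \pref{cr:orientation_Fl(3)} over to a pair of distinct bounding cochains so that the two disk classes contribute with the same sign instead of canceling. Once that is settled, the decisive contrast with the self-Floer cohomology of \pref{th:Gr(2,4)} — where the analogous quantity is $e^{x} + e^{-x}$, which vanishes exactly at $x = \pm\pi\sqrt{-1}/2$ — is simply that for $x_0 = -x_1$ the exponent in the integrand genuinely depends on $\theta$, so no such cancellation occurs and the resulting integral is manifestly nonzero.
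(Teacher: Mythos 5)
Your proposal is correct and follows essentially the same route as the paper: the authors likewise specialize the fiber-integration formula \eqref{eq:int_along_fiber} to $b_0=-b_1=\pi\sqrt{-1}/2\,\bfe_1$, use the \pref{th:FOOO_orientation} sign comparison to show the $\beta_2$-sum equals the $\beta_1$-sum, and arrive at $\delta_{b_0,b_1}(\bfe_3)=\tfrac{16}{3\pi}T^{\lambda}$, whence $\HF\cong(\Lambda_0/T^{\lambda}\Lambda_0)^2$ and vanishing over $\Lambda$. The only divergence is the coefficient of $\delta_{b_0,b_1}(\bfe_1\otimes\bfe_3)$ (you get $\tfrac{16}{3\pi}$, the paper states $\tfrac{32}{3\pi}$), which is immaterial since either is a unit times $T^{\lambda}$.
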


\begin{proof}
For $b = \sqrt{-1}\pi/2 \bfe_1 \in H^1(L_0; \Lambda_0)$, 
we have from \eqref{eq:int_along_fiber} that 
\begin{align*}
  &\frakm_{k+l+1, \beta_i}(\underbrace{b, \dots, b}_k, \bfe_3, 
    \underbrace{-b, \dots, -b}_l) \\
  &\quad = \int_{(0, 2\pi)}
     \frac{1}{k!} \left( \frac{\sqrt{-1}}{4}  \theta \right)^k 
     \frac{1}{l!} 
     \left( \frac{\sqrt{-1}}{4}  \theta  - \frac{\pi \sqrt{-1}}{2} \right)^l
      (1 - \cos \theta) \frac{d \theta}{2\pi}.
\end{align*}
Hence the Floer differential is given by
\begin{align*}
  \delta_{b, -b} (\bfe_3) 
  &= \sum_{i=1, 2} \sum_{k, l\ge 0}
    \frakm_{k+l+1, \beta_i}(\underbrace{b, \dots, b}_k, \bfe_3, 
    \underbrace{-b, \dots, -b}_l) T^{\beta_i \cap \omega} \\
  &= 2 T^{\lambda} \int_0^{2\pi} \sum_{k, l \ge 0}
     \frac{1}{k!} \left( \frac{\sqrt{-1}}{4} \theta \right)^k 
     \frac{1}{l!} 
     \left( \sqrt{-1} \left( \frac{\theta}{4}  - \frac{\pi}{2} \right) \right)^l 
     (1 - \cos \theta) \frac{d \theta}{2\pi} \\
  &= 2 T^{\lambda} \int_0^{2\pi} e^{\sqrt{-1} (\theta / 2 - \pi /2)}  
    (1 - \cos \theta) \frac{d \theta}{2\pi} \\
  &= \frac{16}{3 \pi} T^{\lambda}.
\end{align*}
Similarly we have
\[
  \delta_{b, -b} (\bfe_1 \otimes \bfe_3) = \frac{32}{3 \pi} T^{\lambda} \bfe_1,
\]
and consequently,
\[
  \HF( (L_0,  \pi \sqrt{-1}/2 \bfe_1), 
       (L_0, - \pi \sqrt{-1}/2 \bfe_1) ; \Lambda_0 )
  \cong (\Lambda_0 / T^{\lambda} \Lambda_0)^2.
\]
The computation of 
$\HF( (L_0,  -\pi \sqrt{-1}/2 \bfe_1), 
       (L_0,  \pi \sqrt{-1}/2 \bfe_1) ; \Lambda_0 )$ is 
completely parallel.
\end{proof}

\bibliographystyle{amsalpha}
\bibliography{bibs}

\noindent
Yuichi Nohara

Faculty of Education,
Kagawa University,
Saiwai-cho 1-1,
Takamatsu,
Kagawa,
760-8522,
Japan.

{\em e-mail address}\ : \  nohara@ed.kagawa-u.ac.jp
\ \vspace{0mm} \\

\noindent
Kazushi Ueda

Department of Mathematics,
Graduate School of Science,
Osaka University,
Machikaneyama 1-1,
Toyonaka,
Osaka,
560-0043,
Japan.

{\em e-mail address}\ : \  kazushi@math.sci.osaka-u.ac.jp
\ \vspace{0mm} \\

\end{document}